\renewcommand{\Im}{\operatorname{Im}}
\newcommand{\R}{\mathbb{R}}
\numberwithin{equation}{section}
\newtheorem{theorem}{Theorem}[section]
\newtheorem{proposition}[theorem]{Proposition}
\newtheorem{lemma}[theorem]{Lemma}
\newtheorem{corollary}[theorem]{Corollary}
\newtheorem{remark}[theorem]{Remark}
\newtheorem{definition}[theorem]{Definition}
\begin{document}

\title[On the gH equation with potential]{Scattering for the generalized Hartree equation with a potential}

    \author[Carlos M. Guzmán]{CARLOS M. GUZM\'AN}  
	\address{CARLOS M. GUZMAN  \hfill\break
	Universidade Federal Fluminense, Instituto de Matemática e Estatistica, Brazil.}
	\email{carlos.guz.j@gmail.com}
    
    \author[Cristian Loli]{CRISTIAN LOLI}  
	\address{CRISTIAN LOLI  \hfill\break
	Universidade Federal Fluminense, Instituto de Matemática e Estatistica, Brazil.}
	\email{cristianloli@id.uff.br}

    \author[Luis P. Yapu]{LUIS P. YAPU} 
	\address{LUIS P. YAPU \hfill\break
	Universidade Federal Fluminense, Instituto de Matemática e Estatistica, Brazil.}
	\email{luis.yapu@gmail.com}

\begin{abstract}
We consider the focusing generalized Hartree equation in $H^1(\R^3)$ with a potential,
\begin{equation*}
iu_t + \Delta u - V(x)u + (I_\gamma \ast |u|^p )|u|^{p-2} u=0,
\end{equation*}
where  $I_\gamma = \frac{1}{|x|^{3-\gamma}}$, $p \geq 2$ and $\gamma < 3$. In this paper, we prove scattering for the generalized Hartree equation with a potential in the intercritical case assuming radial initial data. The novelty of our approach lies in the use of a general mass-potential condition, incorporating the potential
V, which extends the standard mass-energy framework. To this end, we employ a simplified method inspired by Dodson and Murphy \cite{Dod-Mur}, based on Tao's scattering criteria and Morawetz estimates. This approach provides a more straightforward proof of scattering compared to the traditional concentration-compactness/rigidity method of Kenig and Merle \cite{KENIG}.


\
  
\noindent Mathematics Subject Classification. 35A01, 35QA55, 35P25.
\end{abstract}

\keywords{Generalized Hartree equation; Radial Scattering}

\maketitle  

\section{\bf INTRODUCTION}\label{sec1}
\indent In this paper, we investigate the Cauchy problem for the focusing generalized Hartree equation with a potential (GHPV for short). Let $u = u(x, t)$ be a complex-valued function. We consider 
\begin{equation} \label{GHP}
\begin{cases}
    iu_t + \Delta u - V(x)u + (I_\gamma * |u|^p )|u|^{p-2} u=0,\text{  } in\text{  } \mathbb{R}^3\times(0,T)\\
    u(x,0)=u_0(x)  \in H^1(\R^3),
\end{cases}
\end{equation}
where  $I_\gamma = \dfrac{1}{|x|^{3-\gamma}}$, $p \geq 2$ and $0<\gamma < 3$. The potential $V:\R^3 \rightarrow \R$ satisfies the following conditions:
\begin{equation}\label{poten}
   \quad V \in K_0\cap L^{\frac{3}{2}}, 
\end{equation}
\begin{equation}\label{poten2}
||V_{-}||_{K_0}< 4\pi, \quad \text{where} \quad V_{-}=\min\{V,0\}.
\end{equation}
Here, $K_0$ represents the closure of the set of functions with compact support concerning the Kato norm:
\begin{equation}\label{Kato}
    \|V\|_{K_0}=\underset{x \in \R^3}{\sup}\int_{\R^3} \dfrac{|V(y)|}{|x-y|}dy.
\end{equation}

\ Under the assumptions of \eqref{poten}-\eqref{poten2}, it is established that the operator \( H := -\Delta + V \) possesses no eigenvalues, and the Sobolev norms \(\|\Lambda f\|_{L^2}\) and \(\|\nabla f\|_{L^2}\) are equivalent, where
\[
\|\Lambda f\|_{L^2}^2 := \int_{\mathbb{R}^3} |\nabla f|^2 \, dx + \int_{\mathbb{R}^3} V |f|^2 \, dx.
\]


\ The solutions to equation \eqref{GHP} conserve both mass and energy throughout their lifespan, given by the expressions:
\begin{equation*}
    M(u)=\int_{\R^3} |u(x,t)|^2 dx = M(u_0),
\end{equation*}
\begin{equation}
\label{eq:energy}
    E(u)=\dfrac{1}{2}||\nabla u||^2_{L^2}-\dfrac{1}{2p}\int_{\R^3}  (I_\gamma \ast |u|^p )|u|^p dx + \dfrac{1}{2} \int_{\R^3} V(x)|u|^2 dx = E(u_0).
\end{equation}
The energy $E_0$ without the potential $V$, is defined by
\begin{equation*}
E_0(u)=\dfrac{1}{2}||\nabla u||^2_{L^2}-\dfrac{1}{2p}\int_{\R^3}  (I_\gamma \ast |u|^p )|u|^p dx ,
\end{equation*} 
and we denote the potential energy by
\begin{equation}\label{potential}
 P(u(t))=\int_{\R^3}  (I_\gamma \ast |u|^p )|u|^p dx.
\end{equation}

\ The main purpose of this paper is to study the energy scattering for \eqref{GHP} in the intercritical case, that is, $0 < s_c < 1$. The critical exponent $s_c$, which defines a scale-invariant Sobolev norm in $\Dot{H}^{s_c}$, is given by
\begin{equation*}
s_c=\dfrac{3}{2}-\dfrac{\gamma+2}{2(p-1)}.
\end{equation*}
\begin{definition} We say that a solution to \eqref{GHP} \emph{scatters} in $H^1$ if there exist $u_\pm\in H^1$ such that
\[
\lim_{t\to\pm\infty}\|u(t) - e^{-itH}u_\pm\|_{H^1}=0,
\]
where $e^{-itH}$ is the free Schr\"odinger propagator. 
\end{definition}
\ To characterize the threshold for scattering, we require a ground state. The equation \eqref{GHP} admits wave solutions of the form $u(x, t) = e^{it} Q(x)$, where $Q$ solves the nonlinear elliptic equation
\begin{equation}\label{elip}
    -Q+\Delta Q+(I_\gamma*|Q|^p)|Q|^{p-2}Q=0.
\end{equation}
The first result of the existence and uniqueness for $p=2$ and $\gamma=2$ was proved by Lieb \cite{Lieb-Choquard-77}. This solution is smooth, radial, decreasing in the radial coordinate and exponentially decaying at infinity. 
 In the general case, the existence of positive solutions for $\frac{N + \gamma}{N} < p < \frac{N + \gamma}{N - 2}$ was shown by Moroz and van Schaftingen \cite{MOROZ-VSchaf-2013}. While the uniqueness of these solutions remains an open question, we utilize the minimizer of the Gagliardo-Nirenberg inequality and its computed value from \cite{ARORA-ROUDENKO-22} for our purposes.

\ Before stating our results, we review previous findings on the model \eqref{GHP}. The well-posedness of the Hartree equation without a potential (\(V=0\) and \(p=2\) in \eqref{GHP}) has been extensively studied, starting with the works of Cazenave \cite{CAZENAVEBOOK} and Ginibre \& Velo \cite{GV}. The local and global well-posedness in \(H^1\) for the generalized Hartree equation without potential was further investigated in \cite{ARORA-ROUDENKO-22}. Scattering results have generally been derived using the compactness and rigidity framework of Kenig and Merle \cite{KENIG}. A simplified proof of scattering for the cubic NLS equation, employing a scattering criterion introduced by Tao \cite{Tao}, was provided by Dodson and Murphy \cite{Dod-Mur}. For the generalized Hartree equation with \(N \geq 3\) and without potential, scattering was proved by Arora and Roudenko following the Kenig-Merle roadmap \cite{ARORA-ROUDENKO-22}, and by Arora \cite{ARORA-GH-19} using a simplified proof similar to the method of \cite{Dod-Mur}. Here, we study the generalized Hartree equation with a potential, as given by equation \eqref{GHP}, in the inter-critical case, utilizing ideas from \cite{Dod-Mur} (see also \cite{Gao_Wang-20-hartree}). Initially, we establish a global existence result for small data in $H^1$ across the entire inter-critical regime.

\begin{proposition} 
Let $p \geq 2$ satisfy $\frac{5+\gamma}{3} < p < 3+ \gamma$ with $0 < \gamma < 3$. Let $V$ satisfy \eqref{poten} and
\eqref{poten2}. Suppose that $\Vert u_0 \Vert_{H^1} \leq E$ and take $\bar r=\frac{12(p-1)}{3+2\gamma-2\epsilon}$, $\bar a=\frac{8(p-1)}{1+2\epsilon}$. Then there exists $\delta = \delta(E)$ such that if
$$ \Vert e^{-itH} u_0 \Vert_{L_t^{\bar a} L_x^{\bar r}} < \delta,$$
then there exist a unique solution $u$ to \eqref{GHP} with initial condition $u_0 \in H^1(\R^3)$ which is globally defined on $[0, \infty)$ such that
$$\Vert u \Vert_{L_t^{\bar a} L_x^{\bar r}} <  2\delta\quad \textnormal{and}\quad \Vert u \Vert_{S(L^2)} + \Vert \Lambda u \Vert_{S(L^2)} \leq 2 C \Vert u_0 \Vert_{H^1}.$$
\end{proposition}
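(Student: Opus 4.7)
The plan is a standard small-data Strichartz/contraction-mapping argument for the Duhamel formulation
\[
\Phi(u)(t) = e^{-itH}u_0 + i\int_0^t e^{-i(t-s)H}\mathcal{N}(u)(s)\,ds, \qquad \mathcal{N}(u) = (I_\gamma\ast|u|^p)|u|^{p-2}u,
\]
adapted to the perturbed propagator $e^{-itH}$ and to the Hartree nonlinearity. A direct computation gives $\tfrac{2}{\bar a}+\tfrac{3}{\bar r}=\tfrac{\gamma+2}{2(p-1)}=\tfrac{3}{2}-s_c$, confirming that $(\bar a,\bar r)$ is $\dot H^{s_c}$-admissible in $\R^3$, so the hypothesis is precisely smallness of the linear flow in the critical Strichartz norm. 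The free parameter $\epsilon$ is reserved to pick an auxiliary $L^2$-admissible companion pair in the nonlinear estimate.

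I would work on the closed ball
\[
X=\Big\{u:\ \|u\|_{L^{\bar a}_t L^{\bar r}_x}\le 2\delta,\ \|u\|_{S(L^2)}+\|\Lambda u\|_{S(L^2)}\le 2C\|u_0\|_{H^1}\Big\},
\]
endowed with the $L^{\bar a}_t L^{\bar r}_x$ metric. The heart of the argument is the nonlinear estimate
\[
\|\mathcal{N}(u)\|_{N(L^2)}\lesssim \|u\|_{L^{\bar a}_t L^{\bar r}_x}^{2(p-1)}\|u\|_{S(L^2)},
\]
where $N(L^2)$ denotes the dual Strichartz norm. This is obtained by applying Hardy–Littlewood–Sobolev to $I_\gamma\ast|u|^p$, then Hölder in space to split $|u|^{2p-1}$ into a $\|u\|_{L^{\bar r}}^{2(p-1)}$ factor and a $\|u\|_{L^q}$ factor against an $L^2$-admissible companion pair, and finally Hölder in time pairing $\bar a$ with that pair; the explicit form of $(\bar a,\bar r)$ is exactly what makes this arithmetic close. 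For the $\Lambda$-version I use the norm equivalence $\|\Lambda\cdot\|_{L^2}\sim\|\nabla\cdot\|_{L^2}$ guaranteed by \eqref{poten}-\eqref{poten2} to reduce matters to a $\nabla\mathcal{N}(u)$ estimate, which distributes by product and fractional-chain rules.

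Feeding these into Strichartz estimates for $e^{-itH}$ (available under \eqref{poten}-\eqref{poten2}) yields
\[
\|\Phi(u)\|_{L^{\bar a}_t L^{\bar r}_x}\le \delta+C\delta^{2(p-1)}\|u_0\|_{H^1},\qquad \|\Phi(u)\|_{S(L^2)}+\|\Lambda\Phi(u)\|_{S(L^2)}\le C\|u_0\|_{H^1}\bigl(1+C\delta^{2(p-1)}\bigr),
\]
so choosing $\delta=\delta(E)$ small enough absorbs the nonlinear contributions and produces $\Phi:X\to X$. The parallel difference estimate (Lipschitz on $X$, since $2(p-1)\ge 2$) supplies the contraction and hence the unique global solution with the stated bounds. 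The only genuine obstacle is the exponent bookkeeping in the HLS/Hölder step — namely, checking that for each admissible $p\in[\tfrac{5+\gamma}{3},3+\gamma)$ one can pick $\epsilon$ so that a genuine $L^2$-admissible companion pair with the required Lebesgue exponent exists; the potential $V$ itself enters only through the Strichartz estimates and the Sobolev-norm equivalence, used here as black boxes.
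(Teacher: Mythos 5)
Your overall architecture (contraction on a ball carrying both the critical norm and the $S(L^2)$ norms, a dual-Strichartz nonlinear estimate of the form $\|N(u)\|_{S'(L^2)}\lesssim \|u\|_{L_t^{\bar a}L_x^{\bar r}}^{2p-2}\|u\|_{S(L^2)}$ with the companion pair $(4^+,3^-)$, and the equivalence of Sobolev norms for the $\Lambda$ bounds) matches the paper. But there is a genuine gap at the one point the proposition is actually about: you assert that $(\bar a,\bar r)$ is $\dot H^{s_c}$-admissible and treat the bound on $\|\Phi(u)\|_{L_t^{\bar a}L_x^{\bar r}}$ as an application of Strichartz ``as a black box.'' The pair only satisfies the scaling relation \eqref{Hs_equation}; it violates the constraint $\bar r<6^-$ in \eqref{HsAdmissivel} for $p$ near the upper endpoint (e.g.\ $\gamma=2$, $p$ close to $5$ gives $\bar r>6$), and, more importantly, the Strichartz estimates available for the perturbed propagator $e^{-itH}$ (Hong) are stated only for $L^2$-admissible pairs, so there is no off-the-shelf estimate of the Duhamel term in $L_t^{\bar a}L_x^{\bar r}$. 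The paper's proof supplies exactly the missing mechanism: Sobolev embedding $\|F\|_{L_x^{\bar r}}\lesssim\|\Lambda^{s_c}F\|_{L_x^{\tilde p}}$ with the genuinely $L^2$-admissible pair $(\bar a,\tilde p)$, $\tilde p=\frac{12(p-1)}{6p-7-2\epsilon}$, then Strichartz at the $L^2$ level, equivalence of $\Lambda^{s_c}$ and $D^{s_c}$ norms (valid because the dual exponent $6'$ lies below $3/s_c$), and the interpolation $\|D^{s_c}N(u)\|_{L_t^{2'}L_x^{6'}}\lesssim\|N(u)\|^{1-s_c}\|\nabla N(u)\|^{s_c}$ before invoking the nonlinear lemma. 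You flag the ``exponent bookkeeping'' as the only obstacle but defer it; that bookkeeping (choice of $\tilde p$, of $(4^+,3^-)$ so that the gradient equivalence $r<3$ is respected, and of $\epsilon$) is precisely the content that lets the result cover the full range $\frac{5+\gamma}{3}<p<3+\gamma$, as the paper's remark emphasizes.

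A secondary issue: you propose to contract in the $L_t^{\bar a}L_x^{\bar r}$ metric. Closing the difference estimate in that norm would again require routing through $\Lambda^{s_c}$ of a difference of nonlinearities, which is not Lipschitz-friendly. The paper sidesteps this by taking the metric $d(u,v)=\|u-v\|_{S(L^2)}$ on $B$, where the difference estimate splits into the two terms $A_1$, $A_2$ and closes with no fractional derivatives; you should do the same (or justify completeness and contraction in your chosen metric separately).
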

\begin{remark}
It is important to note that, due to the presence of the potential \( V \), we must use admissible pairs \( (q, r) \) with \( r < 3 \), rather than \( r \leq 6 \) as in the case without a potential. In this work, we use the pair \( (q, r) = (4^+, 3^-) \); see Section \ref{sec2}. Additionally, the pair \( (\bar{a}, \bar{r}) \) used here represents the scattering norm and is not \( H^{s_c} \)-admissible because \( \bar{r} \) does not satisfy \( \bar{r} < 6 \). Consequently, proving the results with a potential differs from the model without a potential. If we do not consider the appropriate pairs and carefully estimate, we would need to restrict the range of the parameter \( p \). This is one of the main contributions of this work.
\end{remark}

\ We now present the main result of this paper. It is stated as follows.
\begin{theorem} (Scattering)
\label{theo:scattering}
Consider the generalized Hartree equation \eqref{GHP} with $p \geq 2$, $\gamma < 3$ and $\frac{5+\gamma}{3} < p < \gamma + 3$, with potential  $V:\R^3 \rightarrow \R$ satisfying \eqref{poten}, $V \geq 0$, $x \cdot \nabla V \leq 0$, and $x \cdot \nabla V \in L^r$ with $r \in [\frac{3}{2},\infty)$. Let $u_0 \in H^1(\R^3)$ be radial and $u(t)$ be the corresponding solution of \eqref{GHP} such that 
\begin{equation}
\label{eq:only_one_hypo}
    \underset{t \in [0,T^*]}{sup} P(u(t))M(u(t))^{\sigma_c} < P(Q)M(Q)^{\sigma_c}, \quad \text{where} \quad \sigma_c=\frac{1-s_c}{s_c},
\end{equation}
then\footnote{Note that we do not require the condition \eqref{poten2} for $V$, as we assume $V\geq 0$.} $u$ is global solution $(T^*=\infty)$ and scatters in $H^1$.
\end{theorem}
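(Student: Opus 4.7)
The plan is to follow the Dodson--Murphy road map adapted to the generalized Hartree equation with potential, resting on three ingredients: coercivity from \eqref{eq:only_one_hypo}, a radial Morawetz--virial estimate, and Tao's scattering criterion combined with Strichartz theory. First, I would show that the hypothesis \eqref{eq:only_one_hypo} is preserved by the flow. Combining the sharp Gagliardo--Nirenberg inequality for the Hartree nonlinearity (whose best constant is attained at $Q$) with conservation of $E$ and $M$, and using $V\geq 0$ (so $E(u)\geq E_0(u)$), a standard continuity/bootstrap argument would yield a uniform bound of the form $\|\nabla u(t)\|_{L^2}\leq (1-\delta)\|\nabla Q\|_{L^2}$. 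The equivalence $\|\Lambda\cdot\|_{L^2}\sim\|\nabla\cdot\|_{L^2}$, together with mass conservation and the blow-up alternative, then produces a global solution with $\sup_t\|u(t)\|_{H^1}\leq E$.

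For the Morawetz step, with $R\gg 1$ I would use a smooth radial weight $a(x)$ equal to $|x|^2/2$ on $\{|x|\leq R\}$ and linear in $|x|$ for $|x|\geq 2R$, and compute the derivative of the localized virial $V_a(t)=2\,\mathrm{Im}\int\bar u\,\nabla a\cdot\nabla u\,dx$. This produces
\[
\frac{d}{dt}V_a(t)\;\gtrsim\; c\int_{|x|\leq R}\!\left(I_\gamma\ast|u|^p\right)|u|^p\,dx \;-\; \mathrm{err}(R) \;-\; \int \tfrac{x\cdot\nabla V}{|x|}|u|^2\,dx,
\]
where the potential contribution is nonnegative by the assumption $x\cdot\nabla V\leq 0$ and the error is controlled by the radial Sobolev decay $|u(x)|\lesssim|x|^{-1}\|u\|_{H^1}$ together with $1/R$-decay from the cutoff. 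Integrating in time against $|V_a|\lesssim R\|u_0\|_{H^1}^2$, I would produce, for any $\eta>0$, arbitrarily large $T$ and short intervals $J=[T,T+\varepsilon]$ on which the localized Hartree density is smaller than $\eta$.

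For Tao's criterion, on $J$ the smallness of the localized Hartree density, combined with radial decay on $\{|x|>R\}$, yields $\|u\|_{L^{\bar a}_t L^{\bar r}_x(J\times\R^3)}\ll 1$. I would then write Duhamel starting at $T$, decompose $u(T)=e^{-iTH}u_0+i\int_0^T e^{-i(T-s)H}F(u)\,ds$, and split the integral into $[0,T-T_0]$ (handled by the dispersive bound for $e^{-itH}$, where the nonlocal nonlinearity is tamed via radial bounds on $I_\gamma\ast|u|^p$) and $[T-T_0,T]$ (handled by Strichartz together with the Morawetz smallness). This provides
\[
\bigl\|e^{-i(t-T)H}u(T)\bigr\|_{L^{\bar a}_t L^{\bar r}_x([T,\infty)\times\R^3)}<\delta(E),
\]
and the small-data proposition stated earlier, applied on $[T,\infty)$, then gives $\|u\|_{L^{\bar a}_t L^{\bar r}_x([T,\infty)\times\R^3)}<2\delta$; scattering in $H^1$ follows by the standard Strichartz closing.

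The principal obstacle is reconciling the nonlocal Hartree nonlinearity $(I_\gamma\ast|u|^p)|u|^{p-2}u$ with the reduced Strichartz theory imposed by the potential, where only admissible pairs with $r<3$ are available (as emphasized in the remark). Closing the radial-Sobolev and Hardy-type bounds on the convolution $I_\gamma\ast|u|^p$ uniformly across the whole intercritical range $\tfrac{5+\gamma}{3}<p<\gamma+3$ in both the Morawetz error and the long-time Duhamel piece is where the delicate balancing takes place; without the $L^6$ Strichartz norm, the interplay between $r<3$, the scattering pair $(\bar a,\bar r)$, and the kernel $|x|^{-(3-\gamma)}$ must be handled with care at every step.
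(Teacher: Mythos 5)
Your road map is the same one the paper follows (variational coercivity, radial virial--Morawetz, Tao-type criterion via a recent-past/distant-past Duhamel splitting, and the small-data theory in the norm $L_t^{\bar a}L_x^{\bar r}$), but your first step contains a genuine error. The hypothesis \eqref{eq:only_one_hypo} is a supremum over the whole lifespan, so there is nothing to ``preserve by the flow''; more importantly, the intermediate bound $\|\nabla u(t)\|_{L^2}\le(1-\delta)\|\nabla Q\|_{L^2}$ that you claim from a Gagliardo--Nirenberg bootstrap does \emph{not} follow from \eqref{eq:only_one_hypo}: that control of the kinetic energy is exactly what the stronger mass-energy and gradient conditions \eqref{eq:cond1_ME}--\eqref{eq:cond2} of Corollary \ref{two_conds_one_cond-intro} provide, whereas under \eqref{eq:only_one_hypo} alone the energy $E(u_0)$, and hence $\|\Lambda u(t)\|_{L^2}$, may well exceed the ground-state level. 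The correct (and much simpler) argument is the one the paper uses: \eqref{eq:only_one_hypo} together with mass conservation bounds $P(u(t))$, and then energy conservation gives $\|\Lambda u(t)\|_{L^2}^2 = 2E(u_0)+\tfrac1p P(u(t))$ bounded, so the solution is global and uniformly bounded in $H^1$; the sub-threshold hypothesis enters a second time through the \emph{localized} coercivity (Proposition \ref{prop:com_chi}), which is what actually justifies the lower bound $\frac{d}{dt}V_a(t)\gtrsim P(\chi_R u)-\mathrm{err}(R)$ that your Morawetz display asserts without proof.

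Beyond this, your outline stops precisely where the paper's technical work lies. First, the passage from the time-averaged Morawetz bound to the input of the scattering criterion requires converting $P(\chi_{R_n}u(t_n))\to 0$ into smallness of the mass $\int_{B(0,R)}|u(t_n)|^2\,dx$ (done in the paper by comparing the ball integral with the Hartree kernel, since Proposition \ref{scattering_criterion} is stated in terms of mass concentration); your claim that the localized Hartree smallness ``yields $\|u\|_{L^{\bar a}_tL^{\bar r}_x(J)}\ll 1$'' on short intervals is not substantiated and is not how the criterion is fed. Second, in the recent-past and distant-past estimates the pair $(\bar a,\bar r)$ is not admissible and, because of $V$, only Strichartz pairs with $r<3$ are usable; closing these estimates requires the specific interpolations and exponent choices of the paper (the pairs $(q,r)$, $(m,n)$, $(m,s)$, the case split $p<\frac{\gamma+4}{2}$ versus $p\ge\frac{\gamma+4}{2}$ in the distant past, the local bound of Lemma \ref{bound-interval-nabla_u}, radial Sobolev, and the propagation of localized mass over an interval of length $\varepsilon^{-\mu}$, not a short one). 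You explicitly flag this as ``the principal obstacle'' but do not resolve it, so as written the proposal is a correct skeleton of the paper's strategy rather than a proof valid on the full range $\frac{5+\gamma}{3}<p<\gamma+3$.
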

\begin{remark}
Note that, by mass and energy conservation, the hypothesis \eqref{eq:only_one_hypo} implies that the solution of \eqref{GHP} is global and the norm $ \|u\|_{L_t^\infty H_x^1}$ is bounded.
\end{remark}

\ The hypothesis \eqref{eq:only_one_hypo} is more general than the standard mass-energy threshold conditions \eqref{eq:cond1_ME}-\eqref{eq:cond2} typically used to prove scattering. This is formalized in the following result. 

\begin{corollary}
\label{two_conds_one_cond-intro}
Let $u$ be a solution of \eqref{GHP} with maximal lifespan. If 
\begin{equation}\label{eq:cond1_ME}
    M(u_0)^{\sigma_c} E(u_0) < M(Q)^{\sigma_c} E_0(Q)
\end{equation}
and
\begin{equation}\label{eq:cond2}
\|u_0\|_2^{\sigma_c} \|\Lambda u_0\|_2 < \|Q\|_2^{\sigma_c} \|\nabla Q\|_2.
\end{equation}
Then
\begin{equation}
\label{eq:sup_u_Lambda_u}    
\sup_{t \in [0,\infty)} \|u(t)\|_2^{\sigma_c} \|\Lambda u(t)\|_2 < \|Q\|_2^{\sigma_c} \|\nabla Q\|_2.
\end{equation}
That is, $u$ is a global. Moreover, $u$ scatters in $H^1$.%
\end{corollary}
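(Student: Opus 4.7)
The plan is to reduce the corollary to an application of Theorem \ref{theo:scattering} by showing that the two standard threshold conditions \eqref{eq:cond1_ME}--\eqref{eq:cond2} already force the single generalized hypothesis \eqref{eq:only_one_hypo}, with the uniform bound \eqref{eq:sup_u_Lambda_u} obtained along the way. Once \eqref{eq:only_one_hypo} is in hand, Theorem \ref{theo:scattering} gives $T^{\ast}=\infty$ and scattering in $H^{1}$ immediately; the only quantitative work is a continuity/bootstrap argument driven by the sharp Gagliardo--Nirenberg inequality.

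The starting point is the sharp Gagliardo--Nirenberg inequality for the Hartree nonlinearity,
\[
P(u)\leq C_{\mathrm{GN}}\,\|u\|_{L^{2}}^{3+\gamma-p}\,\|\nabla u\|_{L^{2}}^{3p-3-\gamma},
\]
whose optimizer is the ground state $Q$ of \eqref{elip}. Because $V\geq 0$ gives $\|\nabla u\|_{L^{2}}\leq\|\Lambda u\|_{L^{2}}$, substituting GN into $E(u)$ and multiplying by $M(u)^{\sigma_{c}}$ produces
\[
M(u)^{\sigma_{c}}E(u)\geq f\bigl(y(t)\bigr),\qquad y(t):=\|u(t)\|_{L^{2}}^{\sigma_{c}}\,\|\Lambda u(t)\|_{L^{2}},
\]
where $f(y)=\tfrac{1}{2}y^{2}-\tfrac{C_{\mathrm{GN}}}{2p}\,y^{3p-3-\gamma}$. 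The algebraic identity $3+\gamma-p+2\sigma_{c}=\sigma_{c}(3p-3-\gamma)$, which follows directly from the definition $\sigma_{c}=(1-s_{c})/s_{c}$, is what lets me absorb all mass and gradient factors into the single variable $y$. Applying the Pohozaev identities to $Q$ (which yield $P(Q)/\|\nabla Q\|_{L^{2}}^{2}=2p/(3p-3-\gamma)$) shows that $f$ attains its unique positive maximum at $y_{\ast}=\|Q\|_{L^{2}}^{\sigma_{c}}\|\nabla Q\|_{L^{2}}$, with value $f(y_{\ast})=M(Q)^{\sigma_{c}}E_{0}(Q)$.

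The hypotheses of the corollary then translate into $f(y(t))\leq M(u_{0})^{\sigma_{c}}E(u_{0})<f(y_{\ast})$ together with $y(0)<y_{\ast}$. Since $y(t)$ is continuous on the maximal interval $[0,T^{\ast})$ (by the $H^{1}$ local theory and the norm equivalence $\|\Lambda\cdot\|_{L^{2}}\sim\|\nabla\cdot\|_{L^{2}}$), the usual continuity bootstrap rules out any crossing $y(t_{0})=y_{\ast}$ and delivers \eqref{eq:sup_u_Lambda_u}; the $H^{1}$ blow-up alternative then forces $T^{\ast}=\infty$. To close the loop I multiply GN by $\|u\|_{L^{2}}^{2\sigma_{c}}$ and use \eqref{eq:sup_u_Lambda_u}: since $Q$ saturates GN, one obtains $P(u(t))M(u(t))^{\sigma_{c}}<C_{\mathrm{GN}}\,y_{\ast}^{3p-3-\gamma}=P(Q)M(Q)^{\sigma_{c}}$, which is exactly \eqref{eq:only_one_hypo}. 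The scattering statement then follows from Theorem \ref{theo:scattering}. The only delicate step is the Pohozaev bookkeeping identifying $C_{\mathrm{GN}}$, $y_{\ast}$, and $f(y_{\ast})$ with ground-state quantities; the continuity argument itself is entirely standard.
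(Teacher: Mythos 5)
Your proposal is correct and follows essentially the same route as the paper: the sharp Gagliardo--Nirenberg inequality with $V\geq 0$ (so $\|\nabla u\|_2\leq\|\Lambda u\|_2$), the Pohozaev identities identifying the maximizer $y_\ast=\|Q\|_2^{\sigma_c}\|\nabla Q\|_2$ of $f(y)=\tfrac12 y^2-\tfrac{C_{op}}{2p}y^{B}$ with value $M(Q)^{\sigma_c}E_0(Q)$, a continuity/bootstrap argument giving \eqref{eq:sup_u_Lambda_u} and globality, and then feeding the resulting uniform bound back into Gagliardo--Nirenberg to obtain \eqref{eq:only_one_hypo} and invoke Theorem \ref{theo:scattering}. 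The paper merely makes the uniform gap explicit by extracting a $\delta'$ so that $\sup_t P(u)M(u)^{\sigma_c}\leq(1-\delta')^{B}P(Q)M(Q)^{\sigma_c}$, which your conservation-of-energy bootstrap yields implicitly.
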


\ The proof of Theorem \ref{theo:scattering} consists of
two main steps: First, we establish the scattering criterion for the Hartree model with potential. This criterion, initially proven by Tao \cite{Tao} for the radial 3D cubic NLS equation, is adapted to our context. Specifically, we show scattering under the conditions specified in Theorem \ref{theo:scattering} and the assumption
$$
\liminf_{t \rightarrow +\infty}\int_{B(0,R)}|u(x,t)|^2 \, dx \leq \varepsilon^2.
$$
The second step is to verify that this inequality holds, which is ensured by satisfying the assumption stated in \eqref{eq:only_one_hypo}. To do that, we employ Virial-Morawetz-type estimates.

\ The paper is organized as follows. In Section 2 we study the results of global well-posedness and scattering for small data. In Section 3 we prove the Scattering criterion following the method of Dodson and Murphy \cite{Dod-Mur} based on arguments of Tao \cite{Tao}. Section 4 is devoted to the variational analysis using the hypothesis \eqref{eq:only_one_hypo}, and we prove Corollary \ref{two_conds_one_cond-intro}. Finally, in Section 5 we prove Theorem \ref{theo:scattering} using Virial/Morawetz estimates.

\section{\bf NOTATION AND PRELIMINARIES}\label{sec2}

\ Let us start this section by introducing the notation used throughout the paper. We write $a \lesssim b$ to denote $a \leq cb$ for some constant $c > 0$. If $a \lesssim b \lesssim a$, we write $a \sim b$.

We make use of the standard Lebesgue spaces $L^p$, the mixed Lebesgue spaces $L_t^q L_x^r$, as well as the homogeneous and inhomogeneous Sobolev spaces $\dot{H}^{s,r}$ and $H^{s,r}$. When $r = 2$, we write $\dot{H}^{s,2} = \dot{H}^s$ and $H^{s,2} = H^s$. If necessary, we use subscripts to specify which variable we are concerned with. We use $'$ to denote the Hölder dual of an index.

Consider the operator $\mathcal{H} := -\Delta+V$ and $\Lambda := (-\Delta+V)^{\frac{1}{2}}$.
Under the conditions \eqref{poten} on $V$, the operator $\mathcal{H}$ has no eigenvalues, $\Lambda$ is well-defined and the operator $e^{-it\mathcal{H}}$ enjoys dispersive estimates \eqref{eq:dispersive_estimate} and Strichartz estimates \eqref{SE1}-\eqref{SE2}, see  Hong \cite{HONG}.

Let us consider the norms,
$$
\|u\|_{\dot W_V^{s,r}} := \|\Lambda^s u\|_{L^r} \qquad \text{and} \qquad  \|u\|_{W_V^{s,r}} := \| \langle \Lambda \rangle^s u\|_{L^r} \sim \|u\|_{L^r} + \|\Lambda^s u\|_{L^r}.
$$

\begin{lemma}[\cite{HONG}]
If $1<r<\frac{3}{s}$, where $0 \leq s \leq 1$, then we have the following equivalence of the norms,
$$
\|u\|_{\dot W_V^{s,r}} \sim \|u\|_{\dot W^{s,r}} \qquad \text{and} \qquad \|u\|_{W_V^{s,r}} \sim \|u\|_{W^{s,r}}.
$$
\end{lemma}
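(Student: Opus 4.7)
The plan is to reduce the equivalence to the endpoints $s=0$ and $s=1$ and interpolate. For $s=0$ both sides are simply $\|u\|_{L^r}$ and there is nothing to prove.

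For $s=1$ the claim is the $L^r$-equivalence $\|\Lambda u\|_{L^r} \sim \|\nabla u\|_{L^r}$, which is exactly the statement that the generalized Riesz transforms $\nabla \Lambda^{-1}$ and $\Lambda (-\Delta)^{-1/2}$ extend boundedly to $L^r$ for $1<r<3$. Under the assumptions $V \in K_0 \cap L^{3/2}$ with $\|V_-\|_{K_0} < 4\pi$, the operator $\mathcal{H} = -\Delta + V$ is nonnegative and self-adjoint, and the Kato-class hypothesis guarantees Gaussian two-sided bounds on its heat kernel $e^{-t\mathcal{H}}(x,y)$ (Simon). A Calder\'on--Zygmund / Coulhon--Duong argument then yields $L^r$-boundedness of $\nabla \mathcal{H}^{-1/2}$ in the range $1<r\le 2$; the extension to $2<r<3$ uses the explicit integrability $V\in L^{3/2}$ via the identity $\Lambda = (-\Delta)^{1/2}(I+A)^{1/2}$ with $A := (-\Delta)^{-1/2} V (-\Delta)^{-1/2}$, where H\"older combined with the Sobolev embedding $\dot H^{1,r} \hookrightarrow L^{3r/(3-r)}$ forces the threshold $r<3$. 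The reverse bound $\|\nabla u\|_{L^r} \lesssim \|\Lambda u\|_{L^r}$ is symmetric.

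For $0<s<1$, I would apply Stein's complex interpolation theorem to the analytic family $T_z := \Lambda^{z}(-\Delta)^{-z/2}$: on $\operatorname{Re} z = 0$ the imaginary powers $\Lambda^{i\tau}(-\Delta)^{-i\tau/2}$ are uniformly bounded on $L^r$ with at most polynomial growth in $|\tau|$, thanks to the Mihlin-type multiplier theorem for $\mathcal{H}$ (itself a consequence of the Gaussian heat-kernel bounds) and the analogous classical bound for $(-\Delta)^{-i\tau/2}$; on $\operatorname{Re} z = 1$ one uses the endpoint handled above. This gives the homogeneous equivalence, and the inhomogeneous one $\|u\|_{W_V^{s,r}} \sim \|u\|_{W^{s,r}}$ follows by combining with the $s=0$ case, i.e.\ adding $\|u\|_{L^r}$ to both sides.

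The principal obstacle is Step 2, specifically extending the Riesz transform bound $\|\nabla \mathcal{H}^{-1/2} f\|_{L^r} \lesssim \|f\|_{L^r}$ past $r=2$ up to (but not including) $r=3$. The threshold $r=3$ is sharp and encoded by the requirement $V\in L^{3/2}$: at $r=3$ the Sobolev embedding used to control the potential term degenerates, which is precisely why the hypothesis takes the form $1<r<3/s$. This is the content of the cited lemma of Hong, and a full self-contained proof would require importing the heat-kernel machinery in some detail; here I would simply refer to \cite{HONG}.
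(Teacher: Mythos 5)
The paper does not actually prove this lemma: it is imported verbatim from Hong \cite{HONG}, so the ``paper's proof'' is the citation itself, and in that sense your decision to defer the heavy machinery to \cite{HONG} matches the paper's treatment. Your outline is also broadly the right strategy, and essentially the one used in Hong's work: reduce to the endpoint $s=1$ (boundedness of the Riesz transforms $\nabla\mathcal{H}^{-1/2}$ and $\mathcal{H}^{1/2}(-\Delta)^{-1/2}$ on $L^r$ for $1<r<3$, which is where $V\in L^{3/2}$ and the Kato-class smallness enter), and then obtain the fractional range by Stein complex interpolation, using $L^r$-boundedness with polynomial growth of the imaginary powers $\mathcal{H}^{i\tau}$ and $(-\Delta)^{i\tau}$, which follows from Gaussian heat-kernel \emph{upper} bounds.

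Two points in your sketch would not survive scrutiny if you intended them as actual steps rather than pointers to the literature. First, the identity $\Lambda=(-\Delta)^{1/2}(I+A)^{1/2}$ with $A=(-\Delta)^{-1/2}V(-\Delta)^{-1/2}$ is false: one has $\mathcal{H}=(-\Delta)^{1/2}(I+A)(-\Delta)^{1/2}$, but since $(-\Delta)^{1/2}$ and $I+A$ do not commute you cannot take square roots factorwise, so this cannot be the mechanism that carries the Riesz transform bound from $r\le 2$ up to $r<3$; the known proofs of that extension (Shen-type or perturbative arguments exploiting $V\in L^{3/2}$, as used in the references behind \cite{HONG}) work differently. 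Second, you invoke \emph{two-sided} Gaussian bounds for $e^{-t\mathcal{H}}$; under the hypotheses \eqref{poten}--\eqref{poten2} what one has, and what the Coulhon--Duong and Sikora--Wright arguments need, are Gaussian upper bounds, so the claim is both stronger than necessary and not justified by the Kato-class assumption alone. Neither issue is fatal to your overall plan, since you ultimately cite Hong as the paper does, but as written the $2<r<3$ step rests on an invalid operator identity and should be replaced by a citation to, or reproduction of, the actual Riesz transform result for Schr\"odinger operators with $V\in L^{3/2}\cap K_0$.
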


\ In the particular case when $V\geq 0$, we have
$$
\|\Lambda u\|_{L^2}^2 = \int_{\R^3} |\nabla u|^2 dx + \int_{\R^3} V(x)|u|^2 dx \geq \|\nabla u\|_{L^2}^2.
$$

\begin{lemma}(Sobolev inequality)
\label{lema:sobolev_ineq}
Let \( V : \mathbb{R}^3 \to \mathbb{R} \) satisfy \eqref{poten}. Then it holds that
\[
\|f\|_{L^q} \lesssim \|f\|_{\dot{W}^{s, r}_V} \qquad \|f\|_{L^p} \lesssim \|f\|_{W^{s, r}_V(\mathbb{R}^3)},
\]
where \( 1 < p < q < \infty \), \( 1 < p < 3 \), \( 0 \leq s \leq 2 \), and \( \frac{1}{q} = \frac{1}{p} - \frac{s}{3} \).
\end{lemma}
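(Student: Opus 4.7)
The plan is to reduce both inequalities to the classical Sobolev embedding on $\mathbb{R}^3$ by invoking the norm-equivalence lemma stated immediately before, together with the standard fractional embedding $\dot W^{s,p}(\mathbb{R}^3)\hookrightarrow L^q(\mathbb{R}^3)$ for $1/q = 1/p - s/3$.

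First I would treat the regime $s\in[0,1]$. In this range the hypothesis $1<p<3$ forces $p<3\leq 3/s$, so the previous lemma applies and gives $\|f\|_{\dot W^{s,p}_V}\sim\|(-\Delta)^{s/2}f\|_{L^p}$; the homogeneous inequality then follows at once from the classical fractional Sobolev embedding. The inhomogeneous version is obtained by combining the homogeneous bound with the trivial estimate $\|f\|_{L^r}\leq\|f\|_{W^{s,r}_V}$ (which is the $s=0$ case of the equivalence) and interpolating.

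For $s\in(1,2]$ I would proceed by composition, writing $\Lambda^s f=\Lambda^{s-1}(\Lambda f)$ and applying the case just established to $\Lambda f$ at the reduced exponent $s-1\in(0,1]$. This produces an intermediate bound $\|\Lambda f\|_{L^{q_1}}\lesssim\|\Lambda^s f\|_{L^p}$, after which one further Sobolev step converts the control of $\Lambda f$ into control of $f$ in $L^q$. The lower-order error produced when passing from $\Lambda$ to $\nabla$ is the multiplication operator by $V$, which is absorbed by H\"older's inequality using $V\in L^{3/2}(\mathbb{R}^3)$ from \eqref{poten} and another application of Sobolev embedding to the resulting lower-regularity norm of $f$.

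The main obstacle is precisely this extension past $s=1$: the norm-equivalence lemma as recorded covers only $s\in[0,1]$, so the step $s\in(1,2]$ has to be bootstrapped by hand. The role of the $L^{3/2}$ control in \eqref{poten} is decisive here, because $3/2$ is the scaling-critical exponent in three dimensions, which is what makes the perturbation $f\mapsto Vf$ compatible with the Sobolev scale without loss of derivatives and allows the composition argument to close in the full range $1<p<3$.
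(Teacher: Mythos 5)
The paper itself offers no proof of this lemma: it is quoted from Hong \cite{HONG}, where it is obtained exactly as in your first paragraph, i.e.\ the norm equivalence reduces the claim to the classical fractional Sobolev embedding. So for $s\in[0,1]$ your argument is the standard one and is correct (reading the index $r$ in the statement as $p$, which is clearly the intended normalization), and the inhomogeneous bound for intermediate exponents by combining with the trivial estimate and interpolating is fine.

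The weak point is the closing step of your $s\in(1,2]$ case. The composition $\Lambda^s f=\Lambda^{s-1}(\Lambda f)$ and the intermediate bound $\|\Lambda f\|_{L^{q_1}}\lesssim\|\Lambda^s f\|_{L^p}$ are correct, but the mechanism you describe for converting control of $\Lambda f$ into control of $f$ does not work as stated: $\Lambda-(-\Delta)^{1/2}$ is \emph{not} multiplication by $V$ (only $\Lambda^2=-\Delta+V$ is a $V$-perturbation of $-\Delta$), and at that second-order level H\"older gives $\|Vf\|_{L^p}\le\|V\|_{L^{3/2}}\|f\|_{L^{q}}$, i.e.\ the ``error'' lands exactly on the norm you are trying to bound with a constant $\|V\|_{L^{3/2}}$ that \eqref{poten} does not make small, so it cannot be absorbed. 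Fortunately the repair is one line and makes the error-term discussion unnecessary: with $\frac{1}{q_1}=\frac{1}{p}-\frac{s-1}{3}=\frac{1}{q}+\frac{1}{3}$ one automatically has $1<q_1<3$, so the recorded equivalence applies at $s=1$ with exponent $q_1$, giving $\|\Lambda f\|_{L^{q_1}}\sim\|\nabla f\|_{L^{q_1}}$, and the classical embedding $\dot W^{1,q_1}\hookrightarrow L^{q}$ concludes. (Alternatively, note that Hong's equivalence is actually valid for $0\le s\le 2$, $1<r<\frac{3}{s}$; the restriction to $s\le 1$ is only in how this paper records the lemma, so your bootstrap, once fixed as above, is a legitimate way to recover the full range from the weaker recorded statement.)
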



\begin{lemma} (Radial Sobolev)
\label{radial-sobolev}
For a radial function $f \in H^1$ and $\frac{1}{2} \leq s \leq 1$, it follows that
$$ \| |x|^s f \|_{L^\infty} \lesssim \|f\|_{H^1}.
$$    
\end{lemma}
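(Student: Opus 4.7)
The strategy is to establish two separate pointwise bounds on the radial profile of $f$, one sharp near infinity and one sharp near the origin, and then select the appropriate one depending on the size of $|x|$. By density, I would first prove the estimate for smooth, compactly supported, radial functions $f$, and at the end pass to the limit in $H^1$.

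The first ingredient is the classical Strauss bound $|x|\,|f(x)|\lesssim\|f\|_{H^1}$. Writing $f=f(r)$, I would use the fundamental theorem of calculus together with the decay $f(r)\to 0$ as $r\to\infty$ (which follows from $u(r)=rf(r)\in H^1(1,\infty)$ and one-dimensional Sobolev embedding) to write
\[
f(r)^2 \;=\; -2\int_r^\infty f(\rho)f'(\rho)\,d\rho.
\]
Multiplying by $r^2$, bounding $r^2\leq\rho^2$ for $\rho\geq r$, and applying Cauchy--Schwarz with respect to the weighted measure $\rho^2\,d\rho$ yields
\[
r^2 f(r)^2 \;\lesssim\; \Bigl(\int_0^\infty\rho^2 f^2\,d\rho\Bigr)^{1/2}\Bigl(\int_0^\infty\rho^2 (f')^2\,d\rho\Bigr)^{1/2} \;\lesssim\; \|f\|_{L^2}\|\nabla f\|_{L^2}.
\]

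The second ingredient is the bound $|x|^{1/2}|f(x)|\lesssim\|\nabla f\|_{L^2}$, which is useful close to the origin. I would start from $f(r)=-\int_r^\infty f'(\rho)\,d\rho$ and use Cauchy--Schwarz with the split $|f'(\rho)|=\rho^{-1}\cdot\rho|f'(\rho)|$:
\[
|f(r)| \;\leq\; \Bigl(\int_r^\infty\rho^{-2}\,d\rho\Bigr)^{1/2}\Bigl(\int_r^\infty\rho^2 (f'(\rho))^2\,d\rho\Bigr)^{1/2} \;\lesssim\; r^{-1/2}\,\|\nabla f\|_{L^2}.
\]

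To finish, for $1/2\leq s\leq 1$ I would compare $|x|^s$ with $|x|$ and $|x|^{1/2}$: for $|x|\geq 1$ we have $|x|^s\leq |x|$, and the Strauss bound gives $|x|^s|f(x)|\leq|x|\,|f(x)|\lesssim\|f\|_{H^1}$; for $|x|\leq 1$ we have $|x|^s\leq|x|^{1/2}$, and the second bound gives $|x|^s|f(x)|\leq|x|^{1/2}|f(x)|\lesssim\|\nabla f\|_{L^2}\leq\|f\|_{H^1}$. Taking the supremum in $x$ yields the desired estimate. The only mildly delicate point is the justification of the continuous radial representative and of $f(r)\to 0$ as $r\to\infty$, but both are standard consequences of radial symmetry and the one-dimensional Sobolev embedding on $[1,\infty)$; I expect this to be the main ``technical'' obstacle, though it is not a substantive one.
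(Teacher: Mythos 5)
Your argument is correct. The paper states this lemma without proof (it is the classical Strauss-type radial estimate), so there is nothing to compare against line by line; your two ingredients are exactly the standard ones. The first bound, $r^2 f(r)^2 \le 2\int_r^\infty \rho^2 |f||f'|\,d\rho \lesssim \|f\|_{L^2}\|\nabla f\|_{L^2}$, is the usual Strauss inequality (the factors of $4\pi$ from passing between $\int_0^\infty \rho^2(\cdot)\,d\rho$ and the three-dimensional $L^2$ norms are harmless), and the second, $|f(r)|\le \bigl(\int_r^\infty \rho^{-2}d\rho\bigr)^{1/2}\bigl(\int_r^\infty \rho^2 (f')^2 d\rho\bigr)^{1/2}= r^{-1/2}\,c\,\|\nabla f\|_{L^2}$, is the correct near-origin refinement. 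The case split $|x|^s\le |x|$ for $|x|\ge 1$ and $|x|^s\le |x|^{1/2}$ for $|x|\le 1$ uses precisely the hypothesis $\tfrac12\le s\le 1$, and the density step (prove for smooth compactly supported radial functions, then pass to the limit using a.e. convergence of a subsequence) is standard and closes the argument.
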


We also have the following version:
\begin{lemma}[\cite{Hamano-Ikeda-20}] (Radial Sobolev in $L^p$).
\label{radial-sobolev-Lp}
For $p\geq 1$ and a radial function $f \in H^1$, it follows that
$$
\|f\|_{L^{p+1}(|x|\geq R)}^{p+1} \lesssim \frac{1}{R^{p-1}} \|f\|_{L^2(|x|\geq R)}^{\frac{p+3}{2}} \|\nabla f\|_{L^2(|x|\geq R)}^{\frac{p-1}{2}},
$$
for any $R>0$.
\end{lemma}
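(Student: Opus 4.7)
The plan is to reduce this to a pointwise radial Sobolev bound of Strauss type and then extract the factor $|f|^{p-1}$ from the integral $\int |f|^{p+1} = \int |f|^{p-1}\,|f|^2$, controlling the sup by the Strauss estimate and the leftover $|f|^2$ by the $L^2$ norm.

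\textbf{Step 1 (Strauss pointwise bound on exterior balls).} For a radial $f \in H^1(\mathbb{R}^3)$, writing $f(x)=u(|x|)$ and using $u(r)^2 = -2\int_r^\infty u(s) u'(s)\,ds$, I would multiply by $r^2$, use $r\leq s$ in the integrand, and apply Cauchy--Schwarz, rewriting $\int_r^\infty u(s)^2 s^2\,ds$ and $\int_r^\infty u'(s)^2 s^2\,ds$ as (multiples of) $\|f\|_{L^2(|y|\geq r)}^2$ and $\|\nabla f\|_{L^2(|y|\geq r)}^2$ respectively. This yields, for every $x\neq 0$,
\[
|x|^2|f(x)|^2 \lesssim \|f\|_{L^2(|y|\geq |x|)}\,\|\nabla f\|_{L^2(|y|\geq |x|)}.
\]

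\textbf{Step 2 (Plug into the $L^{p+1}$ integral).} I would write
\[
\|f\|_{L^{p+1}(|x|\geq R)}^{p+1}=\int_{|x|\geq R}|f(x)|^{p-1}|f(x)|^2\,dx\;\leq\;\Big(\sup_{|x|\geq R}|f(x)|^2\Big)^{\!\frac{p-1}{2}}\|f\|_{L^2(|x|\geq R)}^2,
\]
and for the sup, use Step 1 together with the monotonicity $\|f\|_{L^2(|y|\geq|x|)}\leq \|f\|_{L^2(|y|\geq R)}$ (and likewise for $\nabla f$) whenever $|x|\geq R$, together with $|x|^{-2}\leq R^{-2}$. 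This gives
\[
\sup_{|x|\geq R}|f(x)|^2 \lesssim R^{-2}\,\|f\|_{L^2(|y|\geq R)}\,\|\nabla f\|_{L^2(|y|\geq R)}.
\]
Combining the two displays yields the claimed inequality with the correct powers $\tfrac{p+3}{2}$ and $\tfrac{p-1}{2}$ and the weight $R^{-(p-1)}$.

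\textbf{Difficulty assessment.} This proof is essentially bookkeeping once one has the Strauss bound localized to exterior regions; the only subtle point is tracking that both $L^2$-type integrals in the Strauss step remain restricted to $\{|y|\geq r\}$ (rather than the whole space), which is what makes the final estimate ``exterior'' on both sides. No compactness or interpolation trickery is needed, so there is no genuine obstacle beyond careful handling of the radial change of variables and the monotonicity used when passing from $|x|\geq R$ inside the supremum.
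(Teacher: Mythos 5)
Your proof is correct: the pointwise exterior Strauss bound $|x|^2|f(x)|^2 \lesssim \|f\|_{L^2(|y|\geq|x|)}\|\nabla f\|_{L^2(|y|\geq|x|)}$ followed by $\int_{|x|\geq R}|f|^{p-1}|f|^2 \leq (\sup_{|x|\geq R}|f|)^{p-1}\|f\|_{L^2(|x|\geq R)}^2$ gives exactly the stated exponents, and your care in keeping both integrals in the Strauss step restricted to $\{|y|\geq r\}$ is the right point to watch (one should also note, as a routine matter, that the identity $u(r)^2=-2\int_r^\infty u u'\,ds$ is justified for radial $H^1$ functions by density/decay). The paper itself gives no proof, citing Hamano--Ikeda, and your argument is essentially the standard one found there.
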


\ Now we recall the Hardy-Littlewood-Sobolev estimate, which allows us to handle the convolution term in the generalized Hartree equation.

\begin{lemma} 
\label{lemma:Hardy}
For $0<\gamma<N$ and $r>1$, there exists a constant $C=C(r,\gamma)$ such that
$$
\left\| \int_{\R^N} \frac{u(y)}{|x-y|^{N-\gamma}} dy\right\|_{L^r(\R^N)} \leq C \|u\|_{L^q(\R^N)},
$$
where $\frac{1}{q} = \frac{1}{r}-\frac{\gamma}{N}$ and $r<\frac{N}{\gamma}$.
\end{lemma}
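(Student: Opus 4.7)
The plan is to prove this by Hedberg's pointwise estimate, combined with the $L^q$-boundedness of the Hardy-Littlewood maximal function $M$. Denote the operator by $Tu(x) := \int_{\R^N} |x-y|^{\gamma - N} u(y)\, dy$ and split the integral at a radius $R > 0$ to be optimized later, writing $Tu(x) = Tu(x)|_{|x-y|<R} + Tu(x)|_{|x-y|\geq R}$.

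For the inner piece, I would decompose the ball $\{|x-y|<R\}$ into dyadic annuli $\{2^{-k-1}R \leq |x-y| < 2^{-k}R\}$, bound the integral over each annulus by $(2^{-k}R)^{\gamma - N}$ times the integral of $|u|$ over the ball of radius $2^{-k}R$ centered at $x$, and sum the resulting geometric series to obtain $|Tu(x)|_{\mathrm{in}} \lesssim R^\gamma\, Mu(x)$. For the outer piece, I would apply H\"older's inequality with conjugate exponents $q$ and $q'$; the tail integral $\int_{|x-y|\geq R} |x-y|^{(\gamma-N)q'}\, dy$ converges precisely when $(N-\gamma)q' > N$, which corresponds to the hypothesis on the range of $r$, and equals a constant multiple of $R^{\gamma - N/q'}$. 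This gives $|Tu(x)|_{\mathrm{out}} \lesssim R^{\gamma - N/q'}\|u\|_{L^q}$.

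Combining the two bounds and choosing $R$ so that they balance, that is $R = (\|u\|_{L^q}/Mu(x))^{q'/N}$, produces a pointwise inequality of the form $|Tu(x)| \lesssim \|u\|_{L^q}^{\theta}\, (Mu(x))^{1-\theta}$ for an explicit $\theta \in (0,1)$. Raising to the power $r$, integrating in $x$, and invoking the $L^p$-boundedness of $M$ for $p > 1$, the Sobolev scaling relation between $q$, $r$, $\gamma$, $N$ ensures that the exponent of $Mu$ on the right-hand side is exactly $q$ and that the power of $\|u\|_{L^q}$ coming from the pointwise bound together with the maximal inequality closes up to $r$, yielding $\|Tu\|_{L^r} \lesssim \|u\|_{L^q}$.

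The main obstacle is not conceptual but the bookkeeping of exponents: one must check that the scaling relation stated in the hypothesis makes all the powers match, that the effective Lebesgue exponent for the maximal inequality is strictly larger than $1$ (which fails at the endpoints and explains the restriction $r > 1$), and that the tail integral in H\"older is indeed finite under the stated range. A more abstract alternative is to establish weak-type endpoint estimates for the Riesz potential and conclude by Marcinkiewicz interpolation, but Hedberg's route above is more elementary and self-contained.
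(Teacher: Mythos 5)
The paper offers no proof of this lemma at all: it is the classical Hardy--Littlewood--Sobolev (Riesz potential) estimate, simply recalled before use, so there is no internal argument to compare with. Your Hedberg-type route (pointwise control by the maximal function plus the $L^q$-boundedness of $M$) is the standard self-contained proof, and its skeleton is right: the dyadic estimate of the near piece giving $R^{\gamma}Mu(x)$ is correct, and optimizing in $R$ against a H\"older bound on the tail, then invoking the maximal inequality, does yield the lemma. The alternative you mention (weak-type endpoints plus Marcinkiewicz) is the other classical route; either would be acceptable here.

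However, the bookkeeping you defer is exactly where your exponents go astray, and the hypotheses are attached to the wrong letter. The tail estimate gives $\bigl(\int_{|x-y|\ge R}|x-y|^{(\gamma-N)q'}\,dy\bigr)^{1/q'}=c\,R^{\gamma-N+N/q'}=c\,R^{\gamma-N/q}$, so the far piece is $\lesssim R^{\gamma-N/q}\|u\|_{L^q}$, not $R^{\gamma-N/q'}\|u\|_{L^q}$; the balancing radius is then $R=\bigl(\|u\|_{L^q}/Mu(x)\bigr)^{q/N}$, and the pointwise bound reads $|Tu(x)|\lesssim\|u\|_{L^q}^{\gamma q/N}\bigl(Mu(x)\bigr)^{1-\gamma q/N}$. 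With your exponents ($q'$ where $q$ should stand) the powers do not close after integration except when $q=2$. Raising the corrected bound to the power $r$, the exponent of $Mu$ is $(1-\gamma q/N)r$, which equals $q$ precisely when $\frac1r=\frac1q-\frac{\gamma}{N}$; then $\|Tu\|_{L^r}^r\lesssim\|u\|_{L^q}^{\gamma qr/N}\|Mu\|_{L^q}^{q}\lesssim\|u\|_{L^q}^{r}$. Consequently the conditions the argument needs are on the source exponent: $q<\frac{N}{\gamma}$ (equivalently $(N-\gamma)q'>N$) for the tail to converge, and $q>1$ to apply the maximal inequality, rather than conditions on $r$ as you assert. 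Relatedly, the relation printed in the lemma, $\frac1q=\frac1r-\frac{\gamma}{N}$, is a sign typo: testing the inequality on dilates shows it can only hold when $\frac1q=\frac1r+\frac{\gamma}{N}$, i.e. $\frac1r=\frac1q-\frac{\gamma}{N}$ with $1<q<\frac{N}{\gamma}$ (equivalently $\frac{N}{N-\gamma}<r<\infty$). This corrected statement is what your argument actually proves, and it is the form used later in Lemma \ref{hartree_holder}.
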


\subsection{Well-posedness theory}

To discuss the well-posedness theory for \eqref{GHP}, we first recall the Strichartz estimates for the generalized Hartree equation without potential from  \cite{ARORA-ROUDENKO-22} for $N=3$. We say the pair $(q, r)$ is $L^2$-admissible if it satisfies
\begin{equation}
\label{L2_equation}
\frac{2}{q}+\frac{3}{r}=\frac{3}{2},    
\end{equation}
where 
\begin{equation}
\label{L2Admissivel}
 q\geq 2 , \quad 2\leq  r  \leq 6.
\end{equation}
Here, $n^+$ denotes a number (slightly) greater than $n$ such that $\frac{1}{n} = \frac{1}{n^+} + \frac{1}{(n^+)'}$. Analogously, $n^-$ denotes a number (slightly) less than $n$.

Given a real number $s>0$, we also say that the pair $(q, r)$ is $S(\dot H^s)$-admissible if  
\begin{equation}
\label{Hs_equation}
\frac{2}{q}+\frac{3}{r}=\frac{3}{2}-s,    
\end{equation}
with
\begin{equation}
\label{HsAdmissivel}
\left(\frac{2}{1-s}\right)^+ \leq q \leq \infty, \quad
\frac{6}{3-2s} \leq  r  < 6^-. 
\end{equation}

Define 
$$\|u\|_{S(L^2,I)}=\sup_{(q,r)\in \mathcal{A}_0}\|u\|_{L^q_I L^r_x}$$ and $$\|u\|_{S'(L^2,I)}=\inf_{(q,r)\in \mathcal{A}_0}\|u\|_{L^{q'}_I L^{r'}_x},$$
where $\mathcal{A}_0$ denotes the set of $L^2$-admissible pairs. We also define the following norm
\begin{equation}\label{norm-H2}
    \|\langle \nabla \rangle u \|_{S(L^2,I)}=\|u\|_{S(L^2,I)}+\| \nabla u\|_{S(L^2,I)}.
\end{equation} 
If $I = \R$, $I$ is omitted usually.

\begin{lemma}[Dispersive estimate and Strichartz estimate, \cite{HONG}] Let \( V : \mathbb{R}^3 \to \mathbb{R} \) satisfies \eqref{poten}. Then, the following statements hold:
\begin{equation}\label{eq:dispersive_estimate}
    \| e^{-itH} \|_{L^1 \to L^\infty} \lesssim |t|^{-\frac{3}{2}},
\end{equation}
\begin{align}
\label{SE1}
\qquad \quad\| e^{-itH} f \|_{L_t^q L_x^r}\;\; \lesssim  \|f\|_{L^2},
\end{align}
\begin{align}
\label{SE2} 
\biggl\| \int_0^t e^{-i(t-s)H} g(s)\,ds\biggr\|_{L_t^q L_x^r} &\lesssim \|g\|_{L_t^{m'} L_x^{n'}}, 
\end{align}
for any $L^2$-admisible pairs $(q,r)$ and $(m,n)$.
\end{lemma}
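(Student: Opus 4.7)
The plan is to deduce both estimates from two fundamental properties of the Schrödinger group for $H = -\Delta + V$: unitarity of $e^{-itH}$ on $L^2$ (immediate from self-adjointness of $H$, which itself uses $V \in K_0 \cap L^{3/2}$), together with a pointwise-in-time dispersive decay. Once the dispersive bound \eqref{eq:dispersive_estimate} is in hand, the Strichartz estimates \eqref{SE1}--\eqref{SE2} for all $L^2$-admissible pairs follow from the abstract machinery of Keel--Tao applied to the semigroup $e^{-itH}$.

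For \eqref{eq:dispersive_estimate} itself I would follow a Beceanu--Goldberg-style Born expansion, writing formally
\[
e^{-itH} f = \sum_{k=0}^{\infty} (-i)^k \int_{\Delta_k(t)} e^{i(t-s_1)\Delta}\, V\, e^{i(s_1-s_2)\Delta}\, V\, \cdots\, V\, e^{i s_k \Delta} f \, ds_1\cdots ds_k,
\]
where $\Delta_k(t) = \{0 \leq s_k \leq \cdots \leq s_1 \leq t\}$. The zeroth term obeys the free Schrödinger bound $\|e^{it\Delta}\|_{L^1\to L^\infty} \leq (4\pi|t|)^{-3/2}$. For each higher term, one extracts a single dispersive factor $|t|^{-3/2}$ and then bounds the remaining space-time integrals using the explicit kernel of $e^{it\Delta}$ together with the very definition \eqref{Kato} of the Kato norm: each interior integration against $V$ collapses, after elementary manipulations, into a convolution controlled exactly by $\|V\|_{K_0}$. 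The key point is that the iterated integrand is bounded in $L^1 \to L^\infty$ by $(4\pi|t|)^{-3/2}$ times a factor proportional to $\|V_-\|_{K_0}^k$ (with $V_+$ absorbed beneficially by the imaginary exponentials), so the condition $\|V_-\|_{K_0}<4\pi$ makes the series geometrically summable. The companion hypothesis $V\in L^{3/2}$ is what rules out a zero-energy eigenvalue or resonance via a Birman--Schwinger-type argument, ensuring the formal series actually represents $e^{-itH}$.

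With \eqref{eq:dispersive_estimate} and $\|e^{-itH} f\|_{L^2} = \|f\|_{L^2}$ in hand, I would invoke the abstract Keel--Tao theorem: in dimension three the full admissible range is covered (including the endpoint $q=2$, $r=6$), and one obtains both the homogeneous estimate \eqref{SE1} and the inhomogeneous estimate \eqref{SE2} simultaneously for all pairs $(q,r),(m,n)$ satisfying $\frac{2}{q}+\frac{3}{r} = \frac{2}{m}+\frac{3}{n} = \frac{3}{2}$ within the range \eqref{L2Admissivel}.

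The main obstacle is the dispersive estimate: securing $|t|^{-3/2}$ decay uniformly for $e^{-itH}$ when $V$ is merely in $K_0 \cap L^{3/2}$, without any pointwise decay, requires both careful bookkeeping in the Born series and the spectral input that excludes bound states and zero-energy resonances. Once that ingredient is established, the Strichartz estimates \eqref{SE1}--\eqref{SE2} reduce to a black-box application of Keel--Tao and the energy identity, which is why the statement is attributed to \cite{HONG} rather than reproved here.
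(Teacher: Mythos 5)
The paper itself does not prove this lemma --- it is imported as a black box from Hong \cite{HONG} --- so the comparison is really with the proof in the cited literature. Your reduction of \eqref{SE1}--\eqref{SE2} to the dispersive bound \eqref{eq:dispersive_estimate} plus $L^2$-unitarity of $e^{-itH}$ via the Keel--Tao theorem is exactly the standard route and is fine. The genuine gap is in your argument for \eqref{eq:dispersive_estimate}. When you estimate the $k$-th term of the Born expansion in $L^1\to L^\infty$, you must put absolute values on the free kernels and on every copy of $V$; the imaginary exponentials cannot ``absorb'' $V_+$, so the natural bound for the $k$-th term is of the form $C\,(\|V\|_{K_0}/4\pi)^{k}\,|t|^{-3/2}$ with the \emph{full} Kato norm \eqref{Kato}, not $\|V_-\|_{K_0}^{k}$. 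Under the standing hypotheses \eqref{poten}--\eqref{poten2} only the negative part is small --- indeed in Theorem \ref{theo:scattering} one takes $V\ge 0$ and possibly large --- so the series is not geometrically summable, and your argument as written only proves the dispersive estimate for potentials with small full Kato norm.

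The actual proof behind the citation (Beceanu--Goldberg, on which Hong relies) obtains $\|e^{-itH}P_c\|_{L^1\to L^\infty}\lesssim |t|^{-3/2}$ for general $V\in K_0$ by a Wiener-algebra/resolvent argument rather than by summing the Born series, where $P_c$ is the projection onto the continuous spectrum; the hypotheses \eqref{poten2} and $V\in L^{3/2}$ enter afterwards, to show $P_c=I$. Concretely, the Kato--Hardy inequality $\int |V_-|\,|f|^2\,dx\le \tfrac{\|V_-\|_{K_0}}{4\pi}\,\|\nabla f\|_{L^2}^2$ together with $\|V_-\|_{K_0}<4\pi$ excludes negative eigenvalues and a zero eigenvalue or resonance (so your attribution of the zero-energy exclusion to $V\in L^{3/2}$ is misplaced), while $V\in L^{3/2}$ is what rules out embedded positive eigenvalues (Ionescu--Jerison-type results) and yields the Sobolev norm equivalences used elsewhere in the paper. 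In short, the skeleton (dispersive estimate $+$ unitarity $+$ Keel--Tao) is correct, but the step that handles large potentials --- the hard part of the cited result --- is missing from your sketch.
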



\begin{lemma}\label{hartree_holder}
Let
\begin{equation}\label{eq:hartree_holder}
\frac{1}{r} + \frac{\gamma}{3} = \frac{1}{p} + \frac{1}{q}.
\end{equation}
Then,
\begin{equation}
\Vert (I_\gamma * f)g \Vert_{L_x^r} \leq \Vert f \Vert_{L_x^{p}} \Vert g \Vert_{L_x^{q}}.
\end{equation}
\end{lemma}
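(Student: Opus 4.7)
The plan is to combine H\"older's inequality with the Hardy--Littlewood--Sobolev estimate recorded in Lemma \ref{lemma:Hardy}. The key observation is that the exponent condition \eqref{eq:hartree_holder} decomposes naturally as a sum of two conditions: one that is the right shape for H\"older splitting the product $(I_\gamma * f)\,g$, and one that matches the HLS scaling for $I_\gamma * f$ alone.

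First I would introduce an auxiliary exponent $s \in (1,\infty)$ determined by
\[
\frac{1}{s} = \frac{1}{p} - \frac{\gamma}{3},
\]
which is exactly the exponent for which Lemma \ref{lemma:Hardy} (with $N=3$) gives the Riesz-potential bound
\[
\| I_\gamma * f \|_{L^s_x} \lesssim \|f\|_{L^p_x}.
\]
Next, a direct application of H\"older's inequality with exponents $s$ and $q$ yields
\[
\|(I_\gamma * f)\,g\|_{L^r_x} \le \| I_\gamma * f \|_{L^s_x}\,\|g\|_{L^q_x},
\]
provided $\tfrac{1}{r} = \tfrac{1}{s} + \tfrac{1}{q}$. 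Substituting the definition of $s$, this H\"older identity becomes $\tfrac{1}{r} = \tfrac{1}{p} - \tfrac{\gamma}{3} + \tfrac{1}{q}$, which is precisely the hypothesis \eqref{eq:hartree_holder}. Chaining the two estimates gives the claim.

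The only checks one needs to verify before invoking Lemma \ref{lemma:Hardy} are that $s > 1$ and $s < 3/\gamma$, i.e.\ $\tfrac{\gamma}{3} < \tfrac{1}{p} < \tfrac{\gamma}{3} + 1$, together with $q,r \in (1,\infty)$; these are implicit admissibility conditions on the triple $(p,q,r)$ and are automatic in all the applications of this lemma in the paper. There is no real obstacle here: the proof is a routine two-line composition, and the only subtlety is bookkeeping with the exponent identity to match the HLS scaling.
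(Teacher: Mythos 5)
Your proof is correct and follows exactly the paper's argument: Hölder with the auxiliary exponent $s$ (the paper's $p_1$, defined by $\tfrac{1}{s}=\tfrac{1}{p}-\tfrac{\gamma}{3}$) followed by the Hardy--Littlewood--Sobolev estimate of Lemma \ref{lemma:Hardy}. Your explicit remark about the admissibility conditions $1<s<\tfrac{3}{\gamma}$ is a minor addition the paper leaves implicit, but the route is the same.
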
 
\begin{proof}
Using Hölder and Hardy-Littlewood-Sobolev (Lemma \ref{lemma:Hardy}) inequalities,  
$$\Vert (I_\gamma * f)g \Vert_{L_x^r} \leq \Vert I_\gamma \ast f \Vert_{L_x^{p_1}} \Vert g \Vert_{L_x^q} \leq \Vert f \Vert_{L_x^p} \Vert g \Vert_{L_x^q},$$ where
$$ \frac{1}{r} = \frac{1}{p_1} + \frac{1}{q}, \quad
\text{and}\quad \gamma = \frac{3}{p} - \frac{3}{p_1}.$$
Substituting $p_1$ from the second equation in the first equation we get \eqref{eq:hartree_holder}. 
\end{proof}

\subsection{Local and Global well-posedness}
We start with the local well-posedness. Because of, the equivalence of norms $\| \Lambda u \|_{L^r} \sim \| \nabla u\|_{L^r}$, we need $1<r<3$. 
\begin{proposition}
Let $N = 3$ and $p$ such that $2 \leq p < 1+ \frac{\gamma+2}{N-2}$. Suppose $V$ satisfies conditions \eqref{poten}-\eqref{poten2} and $u_0 \in H^1(\R^N)$. Then there exists $T>0$, $T=T(\|u_0\|_{H^1},N,p,\gamma)$ and there is a unique solution $u$ of \eqref{GHP} in $[0,T]$ such that
$$
u \in C([0,T],H^1(\R^N)) \cap L^{q}([0,T],W_V^{1,r}(\R^N)),
$$    
where $(q,r)$ is $L^2$-admissible.
\end{proposition}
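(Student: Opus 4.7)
The plan is the standard contraction mapping argument applied to the Duhamel formulation
\begin{equation*}
\Phi(u)(t) = e^{-itH}u_0 - i \int_0^t e^{-i(t-s)H} \bigl[(I_\gamma \ast |u|^p)|u|^{p-2}u\bigr](s)\, ds,
\end{equation*}
working on a closed ball in the Banach space
\begin{equation*}
X_T := \bigl\{ u \in C([0,T]; H^1) \cap L^q([0,T]; W_V^{1,r}(\R^3)) : \| \langle \Lambda \rangle u \|_{S(L^2,[0,T])} \leq M \bigr\},
\end{equation*}
for a suitable constant $M \sim \|u_0\|_{H^1}$, equipped with the metric $d(u,v) = \|u-v\|_{S(L^2,[0,T])}$ (not using $\Lambda$, to avoid the derivative loss when differencing).

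First I would fix the $L^2$-admissible pair $(q,r)=(4^+, 3^-)$ suggested in the remark after Proposition~1.2; this is chosen so that the Hong equivalence $\|\Lambda u\|_{L^r} \sim \|\nabla u\|_{L^r}$ holds (we need $1<r<3$), letting us move between $\Lambda$ and $\nabla$ freely inside the nonlinear estimates, and so that $W^{1,r} \hookrightarrow L^s$ from Lemma~\ref{lema:sobolev_ineq} gives a large enough $s$ to feed into the Hartree term. Then I would combine the Strichartz bounds \eqref{SE1}--\eqref{SE2} with the $\Lambda$-calculus to get
\begin{equation*}
\| \langle \Lambda \rangle \Phi(u)\|_{S(L^2,[0,T])} \lesssim \|u_0\|_{H^1} + \| \langle \Lambda \rangle [(I_\gamma \ast |u|^p)|u|^{p-2}u]\|_{S'(L^2,[0,T])}.
\end{equation*}

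For the nonlinear term, distributing $\Lambda$ via the fractional product/chain rule produces two terms of the schematic form $(I_\gamma \ast |u|^p) |u|^{p-2}\,\Lambda u$ and $(I_\gamma \ast (|u|^{p-1}\Lambda u))|u|^{p-2}u$. To each I apply Lemma~\ref{hartree_holder} to handle the Riesz convolution, then Hölder in space to place $\Lambda u$ in $L^r$ and distribute the remaining $|u|^{2p-2}$ factors into $L^s$ via the Sobolev embedding $W_V^{1,r} \hookrightarrow L^s$ from Lemma~\ref{lema:sobolev_ineq}; the condition \eqref{eq:hartree_holder} together with the Strichartz scaling \eqref{L2_equation} determines a compatible dual pair $(m',n')$. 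Finally, Hölder in time yields a factor $T^\theta$ with $\theta>0$, which makes $\Phi$ a contraction on the ball for $T$ small depending only on $\|u_0\|_{H^1}$, $N$, $p$, $\gamma$. The contraction estimate for $d(\Phi(u),\Phi(v))$ is obtained in the same way, using the telescoping identity for the Hartree term and placing the difference $u-v$ without derivatives.

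The main technical obstacle is precisely the nonlinear estimate: one must verify that the chosen pair $(q,r)=(4^+,3^-)$ together with the Sobolev exponent $s$ associated with $W_V^{1,r}$ satisfies the Hardy--Littlewood--Sobolev requirement $1 < r' < 3/\gamma$ and the Hölder identity \eqref{eq:hartree_holder} simultaneously, while keeping the total power of $u$ equal to $2p-1$ and matching a valid dual Strichartz exponent from $\mathcal{A}_0$. This closure is delicate exactly because, as the authors emphasize in the remark, the presence of $V$ restricts us to $r<3$ rather than $r\leq 6$, so the usual nonlinear estimates at the $\dot H^1$-admissible endpoint are unavailable. The range $2 \leq p < 3+\gamma$ in the hypothesis is dictated by this closure. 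Once the nonlinear estimate is set up correctly, existence, uniqueness and the persistence of regularity $u \in C([0,T];H^1)$ follow from the Banach fixed-point theorem and a standard continuity-in-time argument using the Strichartz estimates.
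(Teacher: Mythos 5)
Your contraction-mapping scheme in Strichartz spaces built on pairs with $r<3$, Hong's equivalence $\|\Lambda u\|_{L^r}\sim\|\nabla u\|_{L^r}$, the Hartree--H\"older lemma, and a positive power of $T$ coming from $H^1$-subcriticality is essentially the paper's approach: the authors do not write the proof out but defer to Arora--Roudenko, remarking only that the admissible pairs must be taken with $2\le r<N$ because of the potential, and the same nonlinear bookkeeping appears in their detailed small-data global argument. Your outline matches this, so it is correct and takes essentially the same route.
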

\begin{proof}
 See \cite{ARORA-ROUDENKO-22}. They showed the same result without potential (for $N\geq 3$), using the pair $(q,r)$ such that $2\leq r<N$. The proof here is similar.  
\end{proof}

\ Showing the global existence of solutions in \(H^1\), we need to be more careful. Specifically, for the equivalence of norms \(\|\Lambda u\|_{L^r} \sim \|\nabla u\|_{L^r}\), it is necessary to choose appropriate pairs \((q, r)\). Before stating the global result, we establish the nonlinearity estimate.
\begin{lemma}[Nonlinear estimates]\label{L:NL} Let $p \geq 2$, $\gamma < 3$ and $\frac{5+\gamma}{3}< p < \gamma + 3$. Then, for any $\epsilon>0$ small
\begin{itemize}
\item [(i)] $\left \| (I_\gamma \ast |u|^p) |u|^{p-2} v \right\|_{L_t^{2'} L_x^{6'}} \lesssim \| u \|_{L_t^{\bar a} L_x^{\bar r}}^{2p-2} \| v \|_{L_t^{4^+} L_x^{3^-}}$,
\item [(ii)] $\left \| \nabla ( (I_\gamma \ast |u|^p) |u|^{p-2} u ) \right\|_{L_t^{2'} L_x^{6'}} \lesssim \| u \|_{L_t^{\bar a} L_x^{\bar r}}^{2p-2} \| \nabla u \|_{L_t^{4^+} L_x^{3^-}}$,
\end{itemize}
where
\begin{equation}\label{pairsused}
\bar r=\frac{12(p-1)}{3+2\gamma-2\epsilon}, \quad\bar a=\frac{8(p-1)}{1+2\epsilon}, \quad3^- = \frac{3}{1+\epsilon} \quad\text{and}\quad 4^+ = \frac{4}{1-2\epsilon}.    
\end{equation}
\end{lemma}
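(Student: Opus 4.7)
The plan is to combine Hartree--H\"older (Lemma \ref{hartree_holder}) in space with a further H\"older splitting to distribute the remaining powers of $u$ and $v$, and then close by H\"older in time with the exponents $\bar a$ and $4^+$. A quick sanity check on the time exponents,
\[
\frac{2p-2}{\bar a}+\frac{1}{4^+}=\frac{(2p-2)(1+2\epsilon)}{8(p-1)}+\frac{1-2\epsilon}{4}=\frac{1+2\epsilon}{4}+\frac{1-2\epsilon}{4}=\frac{1}{2}=\frac{1}{2'},
\]
confirms that the time scaling closes, and the analogous spatial computation will produce exactly the Hartree exponent sum $\tfrac{5}{6}+\tfrac{\gamma}{3}=\tfrac{5+2\gamma}{6}$ required by Lemma \ref{hartree_holder}.

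For part (i), fix $t$ and apply Lemma \ref{hartree_holder} with $f=|u|^p$, $g=|u|^{p-2}v$ and output exponent $6'=6/5$:
\[
\bigl\|(I_\gamma\ast|u|^p)|u|^{p-2}v\bigr\|_{L_x^{6/5}}\lesssim\bigl\||u|^p\bigr\|_{L_x^{a}}\bigl\||u|^{p-2}v\bigr\|_{L_x^{b}},\qquad \frac{1}{a}+\frac{1}{b}=\frac{5}{6}+\frac{\gamma}{3}.
\]
The natural choice is $a=\bar r/p$, so that $\||u|^p\|_{L_x^a}=\|u\|_{L_x^{\bar r}}^p$, and then a plain H\"older on $|u|^{p-2}v$ gives $\||u|^{p-2}v\|_{L_x^b}\leq\|u\|_{L_x^{\bar r}}^{p-2}\|v\|_{L_x^{3^-}}$ with $\tfrac{1}{b}=\tfrac{p-2}{\bar r}+\tfrac{1}{3^-}$. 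Substituting the values in \eqref{pairsused} one verifies
\[
\frac{1}{a}+\frac{1}{b}=\frac{2(p-1)}{\bar r}+\frac{1+\epsilon}{3}=\frac{3+2\gamma-2\epsilon}{6}+\frac{2(1+\epsilon)}{6}=\frac{5+2\gamma}{6},
\]
so Lemma \ref{hartree_holder} applies and yields the pointwise-in-time bound $\|u\|_{L_x^{\bar r}}^{2p-2}\|v\|_{L_x^{3^-}}$. Integrating in time and applying H\"older with the pair $\bigl(\bar a/(2p-2),\,4^+\bigr)$, whose reciprocals sum to $1/2$ by the opening calculation, delivers (i).

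For part (ii), use the Leibniz rule
\[
\nabla\bigl((I_\gamma\ast|u|^p)|u|^{p-2}u\bigr)=(I_\gamma\ast\nabla|u|^p)|u|^{p-2}u+(I_\gamma\ast|u|^p)\nabla(|u|^{p-2}u),
\]
and the chain-rule estimates $|\nabla|u|^p|\lesssim|u|^{p-1}|\nabla u|$ and $|\nabla(|u|^{p-2}u)|\lesssim|u|^{p-2}|\nabla u|$, valid for $p\geq 2$. The second term is exactly (i) with $v=\nabla u$. For the first, apply Lemma \ref{hartree_holder} to $f=|u|^{p-1}|\nabla u|$ and $g=|u|^{p-1}$ at output $6/5$, choose the companion exponent so that $\|g\|_{L_x^{b'}}=\|u\|_{L_x^{\bar r}}^{p-1}$, and distribute the factor $f$ via H\"older into $\|u\|_{L_x^{\bar r}}^{p-1}\|\nabla u\|_{L_x^{3^-}}$. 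The exponent arithmetic is identical to the one just performed and produces the same pointwise bound $\|u\|_{L_x^{\bar r}}^{2p-2}\|\nabla u\|_{L_x^{3^-}}$; a final H\"older in time closes the estimate.

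The main obstacle is the bookkeeping of admissibility. One must simultaneously keep the Hartree--H\"older exponent that feeds into HLS (Lemma \ref{lemma:Hardy}) strictly inside $(1,\tfrac{3}{\gamma})$, keep $3^-\in(1,3)$ so that $\|\Lambda\cdot\|_{L^{3^-}}\sim\|\nabla\cdot\|_{L^{3^-}}$ and the pair $(4^+,3^-)$ remain $L^2$-admissible, and keep $\bar r>1$. These constraints are precisely what pin down the range $\frac{5+\gamma}{3}<p<\gamma+3$, and the small parameter $\epsilon>0$ is inserted exactly to push the exponents strictly off the endpoints so that no admissibility is violated.
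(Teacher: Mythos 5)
Your proposal is correct and follows essentially the same route as the paper: the spatial estimate via Lemma \ref{hartree_holder} combined with H\"older using the exponents $\bar r/p$, $\bar r/(p-2)$ and $3^-$, then H\"older in time pairing $\bar a/(2p-2)$ with $4^+$, and for (ii) the same Leibniz splitting into the term where $\nabla$ falls inside the convolution and the term which is (i) with $v=\nabla u$. Your time-exponent check $\tfrac{2p-2}{\bar a}+\tfrac{1}{4^+}=\tfrac12$ is the correct bookkeeping (the paper's displayed relation contains a harmless typo with $3^-$ in place of $4^+$), so nothing further is needed.
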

\begin{proof}
We start with (i). Note that, $\frac{1}{6'} + \frac{\gamma}{3} = \frac{p}{\bar r} + \frac{p-2}{\bar r} + \frac{1}{3^-}$. Applying Lemma \ref{hartree_holder} and the Hölder inequality. It follows that 
$$\left \| (I_\gamma \ast |u|^p) |u|^{p-2} v \right\|_{L_x^{6'}} \lesssim \| u \|^p_{L_x^{\bar r}} \| u \|_{L_x^{\bar r}}^{p-2} \| v \|_{L_x^{3^-}} = \| u \|_{L_x^{\bar r}}^{2p-2} \| v \|_{L_x^{3^-}}.$$
Now, because of $\frac{1}{2'}=\frac{2p-2}{\bar{a}}+\frac{1}{3^-}$ and by Hölder inequality in the time variable, we have 
$$\| (I_\gamma \ast |u|^p) |u|^{p-2} v \|_{L_t^{2'} L_x^{6'}} \lesssim \| u \|_{L_t^{\bar{a}} L_x^{\bar{r}}}^{2p-2} \| v \|_{L_t^{4^+} L_x^{3^-}}.$$

We now estimate (ii). Observe that
$$ \nabla \left( (I_\gamma \ast |u|^p) |u|^{p-2} u \right) \sim \left( I_{\gamma} \ast |u|^{p-1} \nabla u\right) |u|^{p-2} u  +   \left(I_\gamma \ast |u|^p\right) |u|^{p-2} \nabla u =I+II.$$
As before, by replacing $v$ by $\nabla u$, we obtain II. On the other hand, since
$$\left \| (I_\gamma \ast |u|^{p-1} \nabla u) |u|^{p-2} u ) \right\|_{L_x^{6'}} \lesssim \| u \|^{p-1}_{L_x^{\bar r}} \| \nabla u \|_{L_x^{3^-}} \| u \|_{L_x^{\bar r}}^{p-2} \| u \|_{L_x^{\bar r}}=\| u \|^{2p-2}_{L_x^{\bar r}} \| \nabla u \|_{L_x^{3^-}},$$
and by arguing as (i), we complete the proof.
\end{proof}
\begin{remark}
  Note that, the pair \((\bar{a}, \bar{r})\) which satisfies \eqref{Hs_equation} but not necessarily \eqref{HsAdmissivel}, i.e., it is not \(\dot{H}^{s_c}\)-admissible.
\end{remark}
\begin{proposition} (Global well-possedness) 
Let $p \geq 2$ satisfy $\frac{5+\gamma}{3} < p < 3+ \gamma$ with $0 < \gamma < 3$. Let $V$ satisfy \eqref{poten} and
\eqref{poten2}. Suppose that $\Vert u_0 \Vert_{H^1} \leq E$ and take $\bar r=\frac{12(p-1)}{3+2\gamma-2\epsilon}$, $\bar a=\frac{8(p-1)}{1+2\epsilon}$. Then there exists $\delta = \delta(E)$ such that if
$$ \Vert e^{-itH} u_0 \Vert_{L_t^{\bar a} L_x^{\bar r}} < \delta,$$
then there exist a unique solution $u$ to \eqref{GHP} with initial condition $u_0 \in H^1(\R^3)$ which is globally defined on $[0, \infty)$ such that
$$\Vert u \Vert_{L_t^{\bar a} L_x^{\bar r}} <  2\delta\quad \textnormal{and}\quad \Vert u \Vert_{S(L^2)} + \Vert \Lambda u \Vert_{S(L^2)} \leq 2 C \Vert u_0 \Vert_{H^1}.$$
\end{proposition}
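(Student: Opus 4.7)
The plan is a standard contraction-mapping argument on the Duhamel operator
\[
\Phi(u)(t) := e^{-itH}u_0 + i\int_0^t e^{-i(t-s)H}\bigl(I_\gamma\ast |u|^p\bigr)|u|^{p-2}u\,ds
\]
applied in a space tuned to both the $H^1$-Strichartz data and the scattering norm. Precisely, I would work on
\[
X := \Bigl\{ u : \|u\|_{L_t^{\bar a}L_x^{\bar r}} \leq 2\delta,\; \|u\|_{S(L^2)} + \|\Lambda u\|_{S(L^2)} \leq 2C\|u_0\|_{H^1}\Bigr\}
\]
equipped with the distance $d(u,v) := \|u-v\|_{L_t^{\bar a}L_x^{\bar r}} + \|u-v\|_{S(L^2)}$, and show that for $\delta$ sufficiently small (in terms of $E$) the map $\Phi$ sends $X$ into itself and contracts.

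The first step is to bound $\|\Phi(u)\|_{S(L^2)} + \|\Lambda \Phi(u)\|_{S(L^2)}$. I apply the Strichartz estimates \eqref{SE1}--\eqref{SE2} with the dual $L^2$-admissible pair $(2',6')$ placed on the nonlinearity; by Lemma \ref{L:NL}(i)--(ii) and the equivalence $\|\Lambda\cdot\|_{L^{6'}}\sim \|\nabla\cdot\|_{L^{6'}}$ (valid since $6'=6/5<3$), this yields
\[
\|\Phi(u)\|_{S(L^2)} + \|\Lambda\Phi(u)\|_{S(L^2)} \leq C\|u_0\|_{H^1} + C \|u\|_{L_t^{\bar a}L_x^{\bar r}}^{2p-2}\Bigl(\|u\|_{L_t^{4^+}L_x^{3^-}} + \|\Lambda u\|_{L_t^{4^+}L_x^{3^-}}\Bigr).
\]
The pair $(4^+,3^-)$ satisfies the scaling $\tfrac{2}{4}+\tfrac{3}{3}=\tfrac{3}{2}$ and $3^-<3$, so it is $L^2$-admissible and the norm equivalence applies again; hence the last two terms are bounded by $\|u\|_{S(L^2)} + \|\Lambda u\|_{S(L^2)}$. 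For $u\in X$ this closes to $\leq 2C\|u_0\|_{H^1}$ once $\delta = \delta(E)$ is chosen small.

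The second step controls the scattering norm $\|\Phi(u)\|_{L_t^{\bar a}L_x^{\bar r}}$. The linear piece is $\leq \delta$ by hypothesis. For the Duhamel piece, I note that the pair $(\bar a,\bar r)$ satisfies the $\dot H^{s_c}$-scaling relation \eqref{Hs_equation} but lies outside the admissible range \eqref{HsAdmissivel}, so Strichartz cannot be invoked directly; instead I pass through a genuine $L^2$-admissible Strichartz norm via Sobolev embedding combined with the $H^1$-Strichartz bound already established, producing a contribution of size $(2\delta)^{2p-2}\|u_0\|_{H^1}$, which is again $\leq \delta$ for $\delta$ small. The contraction estimate $d(\Phi(u),\Phi(v)) \leq \tfrac{1}{2} d(u,v)$ is verified identically by combining the pointwise bilinear expansion
\[
\bigl|(I_\gamma\ast|u|^p)|u|^{p-2}u - (I_\gamma\ast|v|^p)|v|^{p-2}v\bigr| \lesssim \bigl(I_\gamma\ast(|u|+|v|)^{p-1}|u-v|\bigr)(|u|+|v|)^{p-1} + \bigl(I_\gamma\ast(|u|+|v|)^p\bigr)(|u|+|v|)^{p-2}|u-v|
\]
with a bilinear version of Lemma \ref{L:NL}. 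The unique fixed point is then the desired global solution.

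The main obstacle is the handling of the non-admissible scattering pair $(\bar a,\bar r)$: the Duhamel term at that pair must be re-injected through a Sobolev embedding from a standard admissible Strichartz norm, and the perturbation parameter $\epsilon$ must be tuned simultaneously with $p$ and $\gamma$ so that admissibility, the norm equivalence $\|\Lambda\cdot\|_{L^{3^-}}\sim\|\nabla\cdot\|_{L^{3^-}}$, and the Sobolev embedding all hold at once. This is the technical content flagged in the remark preceding the proposition, and it is what dictates the range $\tfrac{5+\gamma}{3}<p<3+\gamma$.
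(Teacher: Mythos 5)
Your proposal follows the paper's proof essentially step by step: the same ball defined by the scattering norm $L_t^{\bar a}L_x^{\bar r}$ together with $\|u\|_{S(L^2)}+\|\Lambda u\|_{S(L^2)}$, the same Strichartz estimates \eqref{SE1}--\eqref{SE2} with the dual pair $(2',6')$ on the nonlinearity, the norm equivalence $\|\Lambda\cdot\|_{L^r}\sim\|\nabla\cdot\|_{L^r}$ at exponents below $3$, Lemma \ref{L:NL}, and the same detour for the non-admissible pair $(\bar a,\bar r)$: Sobolev embedding to an $L^2$-admissible companion pair $(\bar a,\tilde p)$ plus interpolation of $D^{s_c}N(u)$ between $N(u)$ and $\nabla N(u)$, yielding the self-mapping bounds $\delta+c\delta^{2p-2}\|u_0\|_{H^1}$ and $C\|u_0\|_{H^1}+c\delta^{2p-2}\|u_0\|_{H^1}$ and the choice $\delta=\delta(E)$.

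The one point where your write-up has a genuine gap is the contraction step. You take the metric $d(u,v)=\|u-v\|_{L_t^{\bar a}L_x^{\bar r}}+\|u-v\|_{S(L^2)}$ and assert the difference estimate is ``verified identically''; it is not. To bound $\|\Phi(u)-\Phi(v)\|_{L_t^{\bar a}L_x^{\bar r}}$ you cannot apply Strichartz at $(\bar a,\bar r)$ (it satisfies \eqref{Hs_equation} but not \eqref{HsAdmissivel}, and $\bar r>6$ once $p>\tfrac{5+2\gamma}{2}$), so you must again pass through the Sobolev/$D^{s_c}$ route, which requires controlling $\|\nabla\bigl(N(u)-N(v)\bigr)\|_{L_t^{2'}L_x^{6'}}$. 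That term produces $|u|^{p-2}\nabla u-|v|^{p-2}\nabla v$, hence $\nabla(u-v)$ — which is not in your metric — and, for $2\le p<3$, Hölder-type factors $\bigl||u|^{p-2}-|v|^{p-2}\bigr|\lesssim|u-v|^{p-2}$ that are not Lipschitz, so adding gradient norms to the metric would not rescue a genuine contraction either. The paper avoids this precisely by contracting only in the weaker metric $d(u,v)=\|u-v\|_{S(L^2)}$ (the ball being closed for this metric by standard weak-limit arguments), where the difference of nonlinearities is handled by the bilinear analogue of Lemma \ref{L:NL} with no derivatives and no scattering-norm differences. Replacing your metric by this one, the rest of your argument closes and coincides with the paper's proof.
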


\begin{proof}
For $\rho$ and $M$ to be defined later, define
$$B = \lbrace u \ : \ \Vert u \Vert_{L_t^{\bar a} L_x^{\bar r}} < \rho \text{ and } \Vert u \Vert_{S(L^2)} + \Vert \Lambda u \Vert_{S(L^2)} \leq M \rbrace.$$
We show that the operator $G$ defined by
$$G(u) = e^{-itH} u_0 + i \int_0^t e^{-i(t-s)H} N(u(s)) ds,$$
where $N(u) = (I_{\gamma} \ast |u|^{p}) |u|^{p-2} u$
is a contraction on $B$, with the metric $d(u,v)=\|u-v\|_{S(L^2)}.$ Indeed,
$$ \Vert G(u) \Vert_{L_t^{\bar a} L_x^{\bar r}} \leq \Vert e^{-itH} u_0 \Vert_{L_t^{\bar a} L_x^{\bar r}} + \Vert \int_0^t e^{-i(t-s)H} N(u(s)) ds \Vert_{L_t^{\bar a} L_x^{\bar r}},$$
and using Sobolev inequality, 
$$ \Vert G(u) \Vert_{L_t^{\bar a} L_x^{\bar r}} \leq \Vert e^{-itH} u_0 \Vert_{L_t^{\bar a} L_x^{\bar r}} + \Vert \Lambda^{s_c} \int_0^t e^{-i(t-s)H} N(u(s)) ds \Vert_{L_t^{\bar a} L_x^{\tilde p}},$$
where $s_c = \frac{3}{\tilde p} - \frac{3}{\bar r}$ and $\tilde p= \frac{12(p-1)}{6p-7-2\epsilon}$. We observe the pair $(\bar a,\tilde p)$ is $L^2$-admissible. The Strichartz estimate, equivalence of norms, and interpolation yield
$$\Vert G(u) \Vert_{L_t^{\bar a} L_x^{\bar r}} \leq \Vert e^{-itH} u_0 \Vert_{L_t^{\bar a} L_x^{\bar r}} + c\Vert D^{s_c} N(u) \Vert_{L_t^{2'} L_x^{6'}},$$
$$\qquad \qquad \qquad \qquad \qquad\leq \Vert e^{-itH} u_0 \Vert_{L_t^{\bar a} L_x^{\bar r}} + c\Vert N(u) \Vert_{L_t^{2'} L_x^{6'}}^{1-s_c} \Vert \nabla N(u) \Vert_{L_t^{2'} L_x^{6'}}^{s_c}.$$
Thus, Lemma \ref{L:NL} and taking $u\in B$ imply
$$ \Vert G(u) \Vert_{L_t^{\bar a} L_x^{\bar r}} \leq \Vert e^{-itH} u_0 \Vert_{L_t^{\bar a} L_x^{\bar r}}+ c\Vert u \Vert_{L_t^{\bar a} L_x^{\bar r}}^{(2p-2)(1-s_c)} \Vert u \Vert_{L_t^{4^+} L_x^{3^-}}^{1-s_c} \Vert u \Vert_{L_t^{\bar a} L_x^{\bar r}}^{(2p-2)s_c} \Vert \nabla u \Vert_{L_t^{4^+} L_x^{3^-}}^{s_c}.
$$
\begin{equation*} \label{ones}
  < \delta + c\rho^{(2p-2)(1-s_c)} M^{1-s_c} \rho^{(2p-2)s_c} M^{s_c} = \delta +  c\rho^{2p-2} M.
\end{equation*}

On the other hand, by applying the Strichartz estimates and Lemma \ref{L:NL} once again, we obtain
\begin{eqnarray*}
\Vert G(u) \Vert_{S(L^2)} &\leq& \Vert e^{-itH} u_0 \Vert_{S(L^2)} + c\Vert N(u) \Vert_{S'(L^2)} \leq c\Vert u_0 \Vert_{L^2} + c\Vert N(u) \Vert_{S'(L^2)}\\
 &\leq &c\Vert u_0 \Vert_{L^2} + c\Vert u \Vert_{L_t^{\bar a} L_x^{\bar r}}^{2p-2} \Vert u \Vert_{L_t^{4^+} L_x^{3^-}} .  
\end{eqnarray*}
Similarly (by equivalence of norms)
$$\Vert \Lambda G(u) \Vert_{S(L^2)} \leq c \Vert \nabla u_0 \Vert_{L^2} + c\Vert u \Vert_{L_t^{\bar a} L_x^{\bar r}}^{2p-2} \Vert \nabla u \Vert_{L_t^{4^+} L_x^{3^-}}.$$
Hence,
\begin{equation}\label{twice}
    \Vert G(u) \Vert_{S(L^2)}+\Vert \Lambda G(u) \Vert_{S(L^2)}    \leq c\Vert u_0 \Vert_{H^1} + c \Vert u \Vert_{L_t^{\bar a} L_x^{\bar r}}^{2p-2} \left( \Vert u \Vert_{L_t^{4^+} L_x^{3^-}}+\Vert \nabla u \Vert_{L_t^{4^+} L_x^{3^-}} \right) \leq E + c\rho^{2p-2} M.
\end{equation}
From \eqref{ones} and \eqref{twice}, we take $M=2E$ and
\begin{equation*}
    \rho=\min\left\{ ( \frac{1}{2M})^{\frac{1}{2p-3}},( \frac{1}{2})^{\frac{1}{2p-2}}\right\},
\end{equation*}
we have that $G(u) \in B.$ Now we show that $G$ is a contraction on $B$. for $u, v \in B$,
\begin{eqnarray*}
d(G(u),G(v)) &\lesssim & \|G(u(t))-G(v(t))\|_{S(L^2)} \\
&\lesssim & \| (I_\gamma \ast|u|^p)|u|^{p-2}u - (I_\gamma\ast|v|^p)|v|^{p-2}v \|_{S'(L^2)}.
\end{eqnarray*}
Rewriting, 
$$
d(G(u),G(v)) \lesssim \| (I_\gamma\ast|u|^p) \left( |u|^{p-2}u - |v|^{p-2}v \right) \|_{S'(L^2)} +  \| (I_\gamma\ast\left( |u|^p - |v|^p \right) |v|^{p-2}v \|_{S'(L^2)} := A_1 + A_2.
$$
Thus,
$$
A_1 \lesssim \|u\|_{L_t^{\bar a} L_x^{\bar r}}^p  \left( \|u\|_{L_t^{\bar a} L_x^{\bar r}}^{p-2} + \|v\|_{L_t^{\bar a} L_x^{\bar r}}^{p-2} \right) \|u-v\|_{S(L^2)} \leq 2 M^{2p-2} \|u-v\|_{S(L^2)}.
$$

$$
A_2 \lesssim \left( \|u\|_{L_t^{\bar a} L_x^{\bar r}}^{p-1} + \|v\|_{L_t^{\bar a} L_x^{\bar r}}^{p-1} \right) \|v\|_{L_t^{\bar a} L_x^{\bar r}}^{p-1} \|u-v\|_{S(L^2)} \leq 2 M^{2p-2} \|u-v\|_{S(L^2)}.
$$
For $u,v \in B$ we deduce
$$
d(G(u(t)),G(v(t))) \leq  4c M^{2p-2} d(u,v),
$$
so, $G$ is a contraction.
\end{proof}


\begin{lemma}[{Space-time bounds imply scattering}]
\label{bound-scattering}
Let $p \geq 2$ satisfy $\frac{5+\gamma}{3} < p < 3+ \gamma$ with $0 < \gamma < 3$. Suppose $V$ satisfies conditions \eqref{poten}-\eqref{poten2} and $u$ be a global solution to \eqref{GHP} satisfying $\|u\|_{L^\infty_t H^1_x}\leq E$. If 
\begin{equation*}
 \|u\|_{ L^{\bar a}_{[T,+\infty)} L^{\bar r}_x }<+\infty,
\end{equation*}
for some $T>0$, then
 $u$ scatters forward in time in $H^1$.
\begin{proof} 
The proof is standard; here are the main steps. For $\eta>0$, let $[T,+\infty) = \displaystyle\bigcup_{j=1}^N I_j$, where the intervals $I_j$ are chosen such that $\|u\|_{ L^{\bar a}_{I_j} L^{\bar r}_x }<\eta$ for all $j$. By Strichartz, Lemma \ref{L:NL} and a continuity argument we get 
\begin{equation}
     \|\langle\nabla\rangle u\|_{S(L^2,[T,+\infty))} < +\infty.
\end{equation}
Defining
$$
\phi^+=e^{-itH}u(T)+i\int\limits_{T}^{+\infty} e^{-itH} N(u)(s) ds,
$$

we conclude that \;\;$\|u(t)-e^{-itH}\phi^+\|_{H^1}\lesssim
\| u \|^{2p-2}_{ L^{\bar a}_{[t,+\infty)} L^{\bar r}_x } \| \langle \nabla \rangle u \|_{ L^{4^+}_{[t,+\infty)} L^{3^-}_x }\rightarrow 0, \,\,\textnormal{as}\,\,t\rightarrow +\infty.
$


\end{proof}
\end{lemma}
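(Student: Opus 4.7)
The approach is the standard three-step scattering argument adapted to the potential setting. First, since $\|u\|_{L^{\bar a}_{[T,+\infty)} L^{\bar r}_x}$ is finite by hypothesis, I can decompose $[T,+\infty) = \bigcup_{j=1}^N I_j$ into finitely many subintervals on each of which $\|u\|_{L^{\bar a}_{I_j} L^{\bar r}_x} < \eta$, where $\eta = \eta(E) > 0$ will be fixed in the next step.

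Second, I would upgrade this scattering-norm bound to a full Strichartz bound on $\langle\nabla\rangle u$. On each $I_j$, writing the Duhamel formula based at the left endpoint $t_j$ and applying the Strichartz estimates together with Lemma \ref{L:NL}(ii), one obtains
\[
\|\langle\nabla\rangle u\|_{S(L^2, I_j)} \leq c\|u(t_j)\|_{H^1} + c\,\eta^{2p-2}\,\|\langle\nabla\rangle u\|_{L^{4^+}_{I_j} L^{3^-}_x},
\]
where the equivalence $\|\Lambda v\|_{L^{3^-}} \sim \|\nabla v\|_{L^{3^-}}$ for $3^- < 3$ is used to pass between the potential-adapted and the flat Sobolev norms. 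Since $\|u(t_j)\|_{H^1} \leq E$, a standard continuity argument with $\eta$ chosen small enough in terms of $E$ absorbs the second term, yielding $\|\langle\nabla\rangle u\|_{S(L^2, I_j)} \leq 2cE$ uniformly in $j$. Summing over the $N$ intervals gives $\|\langle\nabla\rangle u\|_{S(L^2,[T,+\infty))} < +\infty$.

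Third, I would define the scattering profile
\[
\phi^+ = u(T) + i\int_T^{+\infty} e^{isH} N(u(s))\,ds,
\]
verify that the integral converges absolutely in $H^1$, and show that $\|u(t) - e^{-i(t-T)H}\phi^+\|_{H^1}\to 0$ as $t\to+\infty$. Both facts reduce, via Strichartz applied to the Duhamel tail $\int_t^{+\infty} e^{-i(t-s)H} N(u(s))\,ds$ combined with Lemma \ref{L:NL}, to the decay
\[
\|u\|^{2p-2}_{L^{\bar a}_{[t,+\infty)} L^{\bar r}_x}\,\|\langle\nabla\rangle u\|_{L^{4^+}_{[t,+\infty)} L^{3^-}_x} \longrightarrow 0 \quad \text{as } t\to+\infty,
\]
which is immediate from the finiteness of the global space-time norms established in the previous step.

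The main technical point, rather than a genuine obstacle, is the bookkeeping forced by the potential: the equivalence $\|\Lambda v\|_{L^r}\sim\|\nabla v\|_{L^r}$ holds only for $r<3$, which is precisely why the pair $(4^+,3^-)$ must appear in all the nonlinear estimates rather than a pair at $L^6$. Once the admissible pairs and exponents have been chosen consistently with Lemma \ref{L:NL}, the argument follows the now-standard Strichartz-based template for deducing $H^1$ scattering from a finite scattering-norm bound.
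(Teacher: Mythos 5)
Your proposal is correct and follows essentially the same route as the paper: a finite decomposition of $[T,+\infty)$ into small scattering-norm intervals, a continuity argument with Strichartz and Lemma \ref{L:NL} (using the equivalence $\|\Lambda v\|_{L^{3^-}}\sim\|\nabla v\|_{L^{3^-}}$, valid since $3^-<3$) to bound $\|\langle\nabla\rangle u\|_{S(L^2,[T,+\infty))}$, and then convergence of the Duhamel tail to produce the asymptotic state. The only blemish is a harmless phase-bookkeeping slip in your formula for $\phi^+$ (the free propagator applied to $u(T)$ and inside the integral should be based at the same reference time), which matches the level of informality in the paper's own sketch and does not affect the argument.
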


\section{\bf SCATTERING CRITERION}\label{sec:criterion}

\ In this section, we study the scattering criterion for the generalized Hartree equation with a potential.

\begin{proposition}[Scattering criterion]\label{scattering_criterion}
Assume the same hypotheses as in Theorem \ref{theo:scattering}. Consider an $H^1$-solution $u$ to \eqref{GHP} defined on $[0,+\infty)$ and assume 
\begin{equation}\label{E}
\displaystyle\sup_{t \in [0,+\infty)}\left\|u(t)\right\|_{H^1_x} := E < +\infty.
\end{equation}

There exist constants $R > 0$ and $\varepsilon>0$ depending only on $E$, $p$ (but never on $u$ or $t$) such that if
\begin{equation}\label{scacri}
\liminf_{t \rightarrow +\infty}\int_{B(0,R)}|u(x,t)|^2 \, dx \leq \varepsilon^2,
\end{equation}
then $u$ scatters forward in time in $H^1$.
\end{proposition}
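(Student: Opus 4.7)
The plan is to show that $\|u\|_{L^{\bar a}_{[T,\infty)} L^{\bar r}_x} < \infty$ for some sufficiently large $T$, which by Lemma~\ref{bound-scattering} implies $H^1$-scattering. Fix $\varepsilon > 0$ small and $R > 0$ large (both to be chosen in terms of $E$ only). By \eqref{scacri}, pick $T$ arbitrarily large with $\|\mathbf{1}_{B(0,R)} u(T)\|_{L^2_x} \leq \varepsilon$, and on $[T,\infty)$ decompose via Duhamel
\[
u(t) = e^{-i(t-T)H} u(T) + i\int_T^t e^{-i(t-s)H} N(u(s))\,ds =: F(t) + G(t),
\]
where $N(u) = (I_\gamma \ast |u|^p)|u|^{p-2} u$.

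To bound $F$, introduce a radial cutoff $\chi \in C_c^\infty(\R^3)$ with $\chi \equiv 1$ on $B(0,R)$ and supported in $B(0,2R)$, and split $u(T) = u_{in} + u_{out}$ with $u_{in} = \chi u(T)$. The inner part satisfies $\|u_{in}\|_{L^2}\leq \varepsilon$ and $\|u_{in}\|_{H^1} \lesssim E$ (the cutoff preserves the $H^1$ bound with constants independent of $R$), so interpolation gives $\|\Lambda^{s_c} u_{in}\|_{L^2} \lesssim \varepsilon^{1-s_c} E^{s_c}$. Since the scattering pair $(\bar a, \bar r)$ is not $\dot H^{s_c}$-admissible, we follow the device used in the proof of global well-posedness: Sobolev embedding (Lemma~\ref{lema:sobolev_ineq}) $\dot{W}^{s_c, \tilde p}_V \hookrightarrow L^{\bar r}$ combined with Strichartz on the $L^2$-admissible pair $(\bar a, \tilde p)$ yields
\[
\|e^{-i(\cdot - T)H} u_{in}\|_{L^{\bar a}_t L^{\bar r}_x} \lesssim \|\Lambda^{s_c} u_{in}\|_{L^2} \lesssim \varepsilon^{1-s_c} E^{s_c}.
\]
For $u_{out}$, supported in $\{|x|\geq R\}$, radial decay via Lemmas~\ref{radial-sobolev} and~\ref{radial-sobolev-Lp} furnishes smallness of appropriate $L^q$ norms of order $R^{-\theta}$; interpolating with the $H^1$ bound and passing through the same Strichartz--Sobolev chain gives $\|e^{-i(\cdot - T)H} u_{out}\|_{L^{\bar a}_t L^{\bar r}_x} \lesssim R^{-\theta'} E^{\kappa'}$ for some $\theta', \kappa' > 0$. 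Choosing $\varepsilon$ small and $R$ large in terms of $E$ makes $\|F\|_{L^{\bar a}_{[T,\infty)} L^{\bar r}_x} \leq \eta$ for any prescribed $\eta$.

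For $G$, combining Strichartz (again via the detour through $(\bar a, \tilde p)$ and Sobolev embedding) with the nonlinear estimate of Lemma~\ref{L:NL} gives, on any interval $I \subset [T,\infty)$,
\[
\|G\|_{L^{\bar a}_I L^{\bar r}_x} \lesssim \|u\|_{L^{\bar a}_I L^{\bar r}_x}^{2p-2}\bigl(\|u\|_{S(L^2,I)} + \|\Lambda u\|_{S(L^2,I)}\bigr).
\]
Partitioning $[T,\infty) = \bigcup_j I_j$ so that $\|u\|_{L^{\bar a}_{I_j}L^{\bar r}_x}\leq 2\eta$ on each $I_j$, and running a standard continuity/bootstrap argument propagates the bound across the intervals, yielding $\|u\|_{L^{\bar a}_{[T,\infty)}L^{\bar r}_x} < \infty$. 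Lemma~\ref{bound-scattering} then delivers $H^1$-scattering.

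The principal obstacle is the non-admissibility of the scattering pair $(\bar a, \bar r)$: Strichartz does not yield this norm directly, so the detour through the $L^2$-admissible pair $(\bar a, \tilde p)$ plus Sobolev embedding is forced both for the linear term $F$ and for the nonlinear term $G$. Consequently, one must make the outer radial piece $u_{out}$ small in a norm compatible with this detour, which is precisely where the radial Sobolev estimates and the specific exponent choices ($\bar a, \bar r, 3^-, 4^+, \tilde p$) from the well-posedness section enter in an essential way; balancing the $\varepsilon$-smallness (inner piece) against the $R^{-\theta}$-smallness (outer piece) against the $H^1$ bound $E$ so that everything closes in the bootstrap is the main technical content.
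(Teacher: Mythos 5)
There is a genuine gap in your treatment of the linear term $F=e^{-i(\cdot-T)H}u(T)$, specifically the outer piece $u_{out}=(1-\chi)u(T)$. Your chain for the scattering norm runs through Sobolev embedding plus Strichartz on the $L^2$-admissible pair $(\bar a,\tilde p)$, so it requires smallness of an $L^2$-based norm of the data, namely $\|\Lambda^{s_c}u_{out}\|_{L^2}$. But $\|u_{out}\|_{L^2}$ can be comparable to the full mass and $\|\Lambda^{s_c}u_{out}\|_{L^2}\sim E$; the radial Sobolev lemmas only give smallness of $\|u_{out}\|_{L^q}$ for $q>2$, and this cannot be upgraded to smallness of $\dot H^{s_c}$ by interpolating with the $H^1$ bound -- the relevant embeddings go in the opposite direction (higher integrability at the cost of smoothness, never the reverse). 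Nor can you apply radial decay to $e^{-i(t-T)H}u_{out}$ at later times, since the flow does not preserve the exterior support (an incoming radial wave refocuses at the origin in time $\sim R$, well inside $[T,\infty)$) and the dispersive estimate \eqref{eq:dispersive_estimate} would require $L^1$ control you do not have. So your bound $\|e^{-i(\cdot-T)H}u_{out}\|_{L^{\bar a}L^{\bar r}}\lesssim R^{-\theta'}E^{\kappa'}$ is unjustified, and with it the smallness of $F$, which is the heart of the criterion; the inner piece and the closing bootstrap for $G$ are fine but do not carry the argument.

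This is precisely the difficulty that forces the paper's (Tao/Dodson--Murphy) route, which you do not reproduce: instead of splitting $u(T)$ in space, one writes $e^{-i(t-T)H}u(T)=e^{-itH}u_0-iF_1-iF_2$ via Duhamel back to time $0$, where $F_1$ integrates over the recent past $[T-\varepsilon^{-\mu},T]$ and $F_2$ over the distant past $[0,T-\varepsilon^{-\mu}]$. The free term is small on $[T,\infty)$ for $T$ large by \eqref{SE1} (tail of a finite space--time norm); $F_1$ is estimated using the hypothesis \eqref{scacri} propagated over the short interval via the identity $\partial_t\int\eta_R|u|^2\,dx=2\Im\int\nabla\eta_R\cdot\nabla u\,\bar u\,dx$ (choosing $R>\varepsilon^{-(\mu+2)}$), combined with radial Sobolev and the local Strichartz-type bound of Lemma \ref{bound-interval-nabla_u}; and $F_2$ is estimated by the dispersive estimate, since there $t-s\geq\varepsilon^{-\mu}$. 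Your proposal uses the localized-mass smallness only once, at the single time $T$, and has no substitute for the recent-past/distant-past mechanism, so as written the proof does not close.
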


\ To this end, we begin with the following lemma, which is fundamental to the proof.  Since the pair $(\bar{a},\bar{r})$ is not $\dot{H}^{s_c}$-admissible, careful attention is required when estimating the recent past $F_1$ and distant past $F_2$ (see the proof).

\begin{lemma}\label{linevo}
Let $p\geq 2$, $0<\gamma<3$ such that $\frac{5+\gamma}{3} < p < \gamma + 3$. Suppose $V$ satisfies conditions \eqref{poten}-\eqref{poten2} and $u$ be a radial $H^1$-solution to \eqref{GHP} satisfying \eqref{E}. If $u$ satisfies \eqref{scacri} for some $0 < \varepsilon < 1$, then there exist $\gamma, T > 0$ such that 
\begin{equation}\label{norm-small}
\left\| e^{-i(\cdot-T)H} u(T)\right\|_{L^{\bar a}_{[T,+\infty)} L^{\bar r}_x}  \lesssim \varepsilon^\gamma.
\end{equation}
\end{lemma}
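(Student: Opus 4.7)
Let $\{t_n\}\to\infty$ be the sequence realising \eqref{scacri} and set $T=t_n$ for some large $n$ (to be refined below). By the backward Duhamel formula at time $T$,
\[
e^{-i(t-T)H}u(T)\;=\;e^{-itH}u_0+i\!\int_0^T e^{-i(t-s)H} N(u(s))\,ds\;=\;F_0(t)+F_2(t)+F_1(t),
\]
where $N(u)=(I_\gamma\ast|u|^p)|u|^{p-2}u$, $F_2$ is the integral over $s\in[0,T-T_0]$ and $F_1$ over $s\in[T-T_0,T]$, with $T_0=\varepsilon^{-\theta}$ for a small $\theta>0$ to be chosen. I aim to bound each of $F_0,F_1,F_2$ in $L^{\bar a}_{[T,\infty)}L^{\bar r}_x$ by a positive power of $\varepsilon$ and collect the results into a single $\varepsilon^\gamma$.

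For the easy pieces, since $(\bar a,\bar r)$ is not $\dot H^{s_c}$-admissible I would route through the $L^2$-admissible pair $(\bar a,\tilde p)$ introduced in the global well-posedness proof (recall $s_c=3/\tilde p-3/\bar r$): Sobolev embedding followed by the Strichartz estimate \eqref{SE1} yields $\|F_0\|_{L^{\bar a}_tL^{\bar r}_x}\lesssim \|u_0\|_{\dot H^{s_c}}\lesssim E$, hence dominated convergence in $t$ makes the tail $\|F_0\|_{L^{\bar a}_{[T,\infty)}L^{\bar r}_x}\le\varepsilon$ after enlarging $T$. For $F_2$ I would exploit $t-s\ge T_0=\varepsilon^{-\theta}$ through the interpolated dispersive bound
\[
\|e^{-i(t-s)H}f\|_{L^{\bar r}_x}\;\lesssim\;(t-s)^{-\delta}\,\|f\|_{L^{\bar r'}_x},\qquad \delta=3\!\left(\tfrac12-\tfrac1{\bar r}\right),
\]
control $\|N(u(s))\|_{L^{\bar r'}_x}\le C(E)$ uniformly in $s$ via Lemma \ref{hartree_holder} together with the conserved $H^1$-bound, and close with a Young or Hardy--Littlewood--Sobolev-in-time convolution to obtain $\|F_2\|_{L^{\bar a}_{[T,\infty)}L^{\bar r}_x}\lesssim T_0^{-\alpha_1}=\varepsilon^{\theta\alpha_1}$ for some $\alpha_1>0$.

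The recent past $F_1$ is the main obstacle. Applying Strichartz through the admissible pair $(\bar a,\tilde p)$ together with Sobolev embedding, exactly as in the global well-posedness proof, gives
\[
\|F_1\|_{L^{\bar a}_{[T,\infty)}L^{\bar r}_x}\;\lesssim\;\|\langle\Lambda\rangle^{s_c}N(u)\|_{L^{2'}_{[T-T_0,T]}L^{6'}_x},
\]
and the task is to show the right-hand side is small. I would split $u=\chi_R u+(1-\chi_R)u$ with $\chi_R$ a smooth radial cut-off of $B(0,2R)$: the exterior piece obeys $|(1-\chi_R)u(x,s)|\lesssim E/|x|\le E/R$ by Lemma \ref{radial-sobolev}, giving a factor $R^{-\eta}$; the interior piece satisfies $\|\chi_R u(T)\|_{L^2}\lesssim\varepsilon$ by \eqref{scacri}, and this smallness is propagated to $[T-T_0,T]$ by differentiating the local $L^2$-mass in the ball and bounding the associated flux by $C(E)$, yielding $\|\chi_R u(s)\|_{L^2}\lesssim\varepsilon+T_0\cdot C(E)$; interpolating against the $H^1$ bound then supplies small $L^q$-norms for every $q\in(2,6)$. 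Expanding $N(u)$ multilinearly and applying Lemma \ref{hartree_holder} term by term, every summand carries either an $R^{-\eta}$ factor from the exterior or an $\varepsilon$-type factor from the interior. Choosing first $R$ large, then $\theta>0$ small so that the propagation cost $T_0\cdot C(E)$ stays subordinate to a positive power of $\varepsilon$, and finally $T$ large (as in the $F_0$ step), I expect a bound $\|F_1\|\lesssim\varepsilon^{\gamma_1}$. The principal technical obstacle, flagged by the remark preceding this lemma, is precisely the non-admissibility of $(\bar a,\bar r)$: a fractional derivative $\Lambda^{s_c}$ must be placed on $N(u)$ and redistributed across the $\chi_R/(1-\chi_R)$ decomposition, so each of the many cross-terms from the Hartree convolution needs its Hölder and Hardy--Littlewood--Sobolev exponents rebalanced by hand before the smallness from $R$, $\varepsilon$ and $T_0$ can be cashed in.
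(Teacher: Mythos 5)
Your overall architecture (free evolution plus a recent/distant past splitting at time $T-\varepsilon^{-\theta}$, in the Tao/Dodson--Murphy style) matches the paper, and your treatment of $F_0$ is fine, but two of your steps contain genuine gaps. The most serious one is the distant past. You propose the interpolated dispersive bound $\|e^{-i(t-s)H}f\|_{L^{\bar r}_x}\lesssim (t-s)^{-\delta}\|f\|_{L^{\bar r'}_x}$ with $\delta=3(\tfrac12-\tfrac1{\bar r})$ together with a uniform-in-$s$ bound on $\|N(u(s))\|_{L^{\bar r'}_x}$ and a Young/HLS convolution in time. But $\delta>1$ forces $\bar r>6$, i.e. $p>\tfrac52+\gamma$; on the lower part of the intercritical range (e.g. $\gamma=2$, $p=3$, where $\delta=\tfrac58$) the kernel $(t-s)^{-\delta}$ is not integrable, and since $N(u)$ is only bounded in $L^\infty_s L^{\bar r'}_x$ on the interval $[0,T-\varepsilon^{-\theta}]$, whose length is \emph{not} controlled by any power of $\varepsilon$ (recall $T=t_n\to\infty$), no Young or Hardy--Littlewood--Sobolev inequality in time closes: the naive bound produces a factor of order $T$, and fractional integration in time needs $N(u)$ in some $L^b_s$ with $b<\infty$, which you do not have. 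This is exactly why the paper does not estimate $F_2$ this way: it interpolates $\|F_2\|_{L^{\bar a}L^{\bar r}}\le\|F_2\|_{L^kL^l}^{1-\theta}\|F_2\|_{L^{\bar p}_tL^\infty_x}^{\theta}$, controls the $L^2$-admissible piece with no smallness at all via the backward Duhamel representation $F_2=e^{-itH}[\,\cdots\,]$, Strichartz \eqref{SE1} and mass conservation, and extracts the smallness only from the $L^\infty_x$ piece, where the full $|t-s|^{-3/2}$ decay \eqref{eq:dispersive_estimate} (exponent $>1$) applies after bounding $\|N(u)\|_{L^1_x}$ by Lemma \ref{hartree_holder} and the $H^1$ bound; this yields $\varepsilon^{\mu(\frac12-\frac1{\bar p})\theta}$, with a case distinction $p\lessgtr\frac{\gamma+4}{2}$ in the choice of $(k,l,\bar p,\theta)$ that your argument never confronts.

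The second gap is in the recent past. Your propagation of the localized mass reads $\|\chi_R u(s)\|_{L^2}\lesssim\varepsilon+T_0\cdot C(E)$, which is never small since $T_0=\varepsilon^{-\theta}\geq1$; the whole point of the local mass identity is the factor $1/R$ coming from $\nabla\eta_R=O(1/R)$, i.e. $\bigl|\partial_t\int\eta_R|u|^2\,dx\bigr|\lesssim\frac1R$, so that on $I_1$ one gets $\int\eta_R|u|^2\lesssim\varepsilon^2+\frac{\varepsilon^{-\mu}}{R}$ and the loss is absorbed by tying $R$ to $\varepsilon$, namely $R>\varepsilon^{-(\mu+2)}$ (see \eqref{eq:etaR_u}); choosing ``$R$ large, then $\theta$ small'' without this coupling cannot rescue your bound. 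Finally, your plan to put $\Lambda^{s_c}$ on $N(u)$ and redistribute it across the $\chi_R/(1-\chi_R)$ decomposition creates commutator problems you leave unexecuted; the paper avoids them by interpolating the norm $\|u\|_{L^{\bar a}L^{\bar r}}\le\|u\|_{L^qL^r}^{1-s_c}\|u\|_{L^mL^n}^{s_c}$ with $(q,r)$ $L^2$-admissible and $(m,n)$ $S(\dot H^1)$-admissible, bounding all gradient-type factors on the short interval $I_1$ by Lemma \ref{bound-interval-nabla_u} (cost $\varepsilon^{-\mu(\cdots)}$) and performing the cutoff decomposition only on the derivative-free quantity $\|u\|_{L^\infty_tL^r_x}$, where \eqref{eq:etaR_u} and the radial Sobolev Lemma \ref{radial-sobolev} directly give the positive power of $\varepsilon$.
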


\begin{proof}

Fix the parameters $\mu, \gamma >0$ (to be chosen later). Applying Strichartz estimate \eqref{SE1}, there exists $T_{0} > \varepsilon^{-\mu}$ such that
\begin{equation}
\label{T0}
\left\| e^{-itH} u_0\right\|_{L^{\bar a}_{[T,+\infty)} L^{\bar r}_x} \leq \varepsilon^{\gamma}.
\end{equation}

For $T\geq T_0$, define  $I_1 :=\left[T-\varepsilon^{-\mu}, T\right]$ and  $I_2 := [0, T-\varepsilon^{-\mu}]$. Duhamel's formula
\begin{equation}
u(T) = e^{-itH}u_0 - i\int_0^{T} e^{-i(t-s)H} N(u)(s) \, ds,
\end{equation}
implies that
\begin{equation}
e^{-i(t-T)H}u(T)  = e^{-itH} u_0 - iF_1 - iF_2,
\end{equation}
where, for $i = 1,2,$
$$
F_i = \int_{I_i} e^{-i(t-s)H} N(u) (s) \,ds.
$$
We refer to $F_1$ as the ``recent past" and to $F_2$ as the ``distant past". By \eqref{T0}, it remains to estimate $F_1$ and $F_2$.

\textbf{Step 1. Estimate on recent past.}

Let $\eta$ denote a smooth, spherically symmetric function which equals $1$ on $B(0, 1/2)$ and $0$ outside $B(0,1)$. For any $R > 0$ we use $\eta_R$ to denote the rescaling $\eta_R(x) := \eta(x/R)$.

By hypothesis \eqref{scacri}, we can fix $T\geq T_0$ such that
\begin{equation}\label{mass}
\int \eta_R(x)\left|u(T,x)\right|^2dx\lesssim \varepsilon^2.
\end{equation}

The following relation is obtained by multiplying \eqref{GHP} by $\eta_R\bar{u}$, taking the imaginary part and integrating by parts (see Tao \cite[Section 4]{Tao} for the NLS version),

$$
\partial_t\int \eta_R|u|^2\, dx = 2\Im\left(\int \nabla\eta_R \cdot \nabla u \bar{u} \right).
$$
Thus, using \eqref{E},
$$
\left| \partial_t \int \eta_R(x)|u(t,x)|^2dx\right| \lesssim \frac{1}{R},
$$

so that,  by \eqref{mass}, for $t \in I_1$,
\begin{equation*}
    \int \eta_R(x)\left|u(t,x)\right|^2dx\lesssim \varepsilon^2+\frac{\varepsilon^{-\mu}}{R}.
\end{equation*}

If $R > \varepsilon^{-(\mu+2)}$, then we have
\begin{equation}
\label{eq:etaR_u}
 \left\| \eta_Ru\right\|_{L^\infty_{I_1}L^2_x} \lesssim 
\varepsilon.   
\end{equation}

Recall the pairs used in Section 2. $(\bar a,\bar r)$ satisfies \eqref{Hs_equation}, $(\bar a, \tilde p)$, $(2,6)$ and $(4^+,3^-)$ are $L^2$-admissible.

$$ \| F_1(u) \|_{L_t^{\bar a} L_x^{\bar r}} \lesssim \| D^{s_c} F_1(u) \|_{L_t^{\bar a} L_x^{\bar p}} \lesssim \| D^{s_c} N(u) \|_{L_t^{2'} L_x^{6'}} \lesssim \| u \|_{L_t^{\bar a} L_x^{\bar r}}^{2p-2} \| D^{s_c} u \|_{L_t^{4^+} L_x^{3^-}}. $$

Using interpolation, 
\begin{equation}\label{interpolation}
\| u \|_{L_t^{\bar a} L_x^{\bar r}} \leq \|u\|_{L_t^q L_x^r}^{1-s_c} \|u\|_{L_t^m L_x^n}^{s_c},  
\end{equation}
where $(q,r)$ which is $L^2$-admissible, $(m,n)$  is $S(\dot{H}^{1})$-admissible and
$$\frac{1}{\bar a} = \frac{1-s_c}{q} + \frac{s_c}{m} \qquad \text{and} \qquad \frac{1}{\bar r} = \frac{1-s_c}{r} + \frac{s_c}{n}.$$
Defining the numbers 
$$
q=\frac{8(\gamma+2)}{3(1+2\epsilon)}, \qquad 
r=\frac{4(\gamma+2)}{3+2\gamma-2\epsilon}, \qquad m=3q,\;\;\text{and}\;\; n=3r, 
$$
we obtain \eqref{interpolation} (using the value of\footnote{that, $(q,r)$ is $L^2$- admissible and $(m,n)$ is $S(\dot{H}^1)$-admissible. Moreover, $2\leq r\leq 6$ and $m\geq 2$.} $\bar{a}$). By Sobolev embedding, with $\frac{1}{n} = \frac{1}{s} - \frac{1}{3}$, we have
\begin{equation}\label{interp2}
 \| u \|_{L_t^{\bar a} L_x^{\bar r}} \leq \|u\|_{L_t^q L_x^r}^{1-s_c} \|\nabla u\|_{L_t^m L_x^s}^{s_c}.
\end{equation}
Thus $n=\frac{3s}{3-s}$, or equivalently $s=\frac{3n}{3+n}$. Observe that $(m,s)$ is $L^2$-admissible and $2\leq s<3$.
\ Combining the relation \eqref{interpolation}, \eqref{interp2} and interpolation, it follows that
\begin{align}\label{eq:F1_bounds}
 \| F_1(u) \|_{L_t^{\bar a} L_x^{\bar r}} &\lesssim \| u \|_{L_t^q L_x^r}^{(1-s_c)(2p-2)} \| u \|_{L_t^m L_x^n}^{s_c(2p-2)} \| D^{s_c} u \|_{ L_t^{4^+} L_x^{3^-} }\nonumber\\
 &\lesssim \| u \|_{L_t^q L_x^r}^{(1-s_c)(2p-2)} \| \nabla u \|_{L_t^m L_x^s}^{s_c(2p-2)} \| D^{s_c} u \|_{ L_t^{4^+} L_x^{3^-} } \nonumber\\
 &\lesssim \| u \|_{L_t^q L_x^r}^{(1-s_c)(2p-2)} \| \nabla u \|_{L_t^m L_x^s}^{s_c(2p-2)} \|u\|_{ L_t^{4^+} L_x^{3^-} }^{1-s_c} \| \nabla u \|_{L_t^{4^+} L_x^{3^-} }^{s_c}.
 \end{align}
Lemma \ref{bound-interval-nabla_u} gives
\begin{equation}
\label{eq:tree-factors}
\| \nabla u \|_{L_t^m L_x^s}^{s_c(2p-2)} \|u\|_{ L_t^{4^+} L_x^{3^-} }^{1-s_c} \| \nabla u \|_{L_t^{4^+} L_x^{3^-} }^{s_c} \lesssim  (\varepsilon^{-\mu})^{\frac{s_c(2p-2)}{m}} (\varepsilon^{-\mu})^{\frac{1-s_c}{4^+}} E^{1-s_c} (\varepsilon^{-\mu})^{\frac{s_c}{4^+}}.    
\end{equation}

It remains to study $\| u \|_{L_t^q L_x^r}$. The Hölder inequality and interpolation imply 
\begin{eqnarray*}
 \| u \|_{L_t^q L_x^r} &\leq& \| 1 \|_{L_t^q} \| u \|_{L_t^\infty L_x^r} \leq \epsilon^{-\frac{\mu}{q} } \| u \|_{L_t^\infty L_x^r}\leq \epsilon^{-\frac{\mu}{q} }(\| \eta_R u \|_{L_t^\infty L_x^r} + \| (1-\eta_R) u \|_{L_t^\infty L_x^r})\\ 
 &\leq& \epsilon^{-\frac{\mu}{q} }\| \eta_R u\|_{L_t^\infty L_x^2}^{1-\theta} \|u\|_{L_t^\infty L_x^6}^\theta +\epsilon^{-\frac{\mu}{q} } \| (1-\eta_R) u \|_{L_t^\infty L_x^\infty}^{1-\bar \theta} \|u\|_{L_t^\infty L_x^2}^{\bar \theta},
\end{eqnarray*}
where
$$\frac{1}{r} = \frac{1-\theta}{2} + \frac{\theta}{6} \qquad \text{and} \qquad \frac{1}{r} = \frac{1-\bar \theta}{\infty} + \frac{\bar \theta}{2},$$
thus
$$ \theta = \frac{3(r-2)}{2r} = \frac{3(1+2\epsilon)}{4(\gamma+2)} \qquad \text{and} \qquad \bar \theta = \frac{2}{r} = \frac{2\gamma+3-2\epsilon}{2(\gamma+2)}. $$

The hypothesis \eqref{E} leads to,
$$ \| u \|_{L_t^q L_x^r} \lesssim \varepsilon^{-\frac{\mu}{q}} \left( \| \eta_R u\|_{L_t^\infty L_x^2}^{1-\theta} + \| (1-\eta_R) u \|_{L_t^\infty L_x^\infty}^{1-\bar \theta} \right),
$$
which, using \eqref{eq:etaR_u} and Lemma \ref{radial-sobolev}, give
$$ \| u \|_{L_t^q L_x^r} \lesssim \varepsilon^{-\frac{\mu}{q}} (\varepsilon^{1-\theta} + R^{-\frac{1-\bar \theta}{2}}).$$

Since $R>\varepsilon^{-2-\mu}$,
$$ \| u \|_{L_t^q L_x^r} \lesssim \varepsilon^{-\frac{\mu}{q}} (\varepsilon^{1-\theta} + \varepsilon^{(1-\bar \theta)\frac{2+\mu}{2}}). 
$$
Note that $\theta < \bar \theta$ and choosing $\mu$ sufficiently small, one has $1-\theta > (1-\bar \theta)\frac{2+\mu}{2}$, so
\begin{equation}
\label{eq:u_q_r}
\| u \|_{L_t^q L_x^r} \lesssim \varepsilon^{-\frac{\mu}{q}} \varepsilon^{(1-\bar \theta)\frac{2+\mu}{2}} = \varepsilon^{(1-\bar \theta)\frac{2+\mu}{2} - \frac{\mu}{q}}.
\end{equation}
Finally, applying \eqref{eq:tree-factors} and \eqref{eq:u_q_r} we deduce that
$$
\| F_1(u) \|_{L_t^{\bar a} L_x^{\bar r}} \lesssim (\varepsilon^{(1-\bar \theta)\frac{2+\mu}{2} - \frac{\mu}{q}})^{(1-s_c)(2p-2)}  \varepsilon^{ - \mu\frac{s_c(2p-2)}{m}} \varepsilon^{-\mu \frac{1-s_c}{4^+}} \varepsilon^{-\mu \frac{s_c}{4^+}}.
$$
Since $\mu$ can be chosen very small, the exponent of $\varepsilon$ is positive, that is
$$\| F_1(u) \|_{L_t^{\bar a} L_x^{\bar r}} \lesssim \varepsilon^{\nu_1}.$$

\textbf{Step 2. Estimate on distant past.}


We have
\begin{equation}
\label{eq:F2_inicio}
\left\|F_2\right\|_{L_{[T,+\infty)}^{\bar a}L_x^{\bar r}}  \leq 
\left\|F_2\right\|^{1-\theta}_{L_{[T,+\infty)}^{k}L_x^{l}}
\left\|F_2\right\|^{\theta}_{L_{[T,+\infty)}^{\bar p}L_x^{\infty}},   
\end{equation}
where $0 < \theta < 1$ and
\begin{equation}
\label{interp_kl_pinfty}
\frac{1}{\bar r} = \frac{\theta}{l} \qquad \text{and} \qquad \frac{1}{\bar a} = \frac{\theta}{k} + \frac{1-\theta}{\bar p},    
\end{equation}
such that $(k,l)$ is $L^2$-admissible and $\bar p >2$.

We can rewrite $F_2$ applying Duhamel's principle as
$$
F_2 = e^{-itH}\left[e^{-i(-T+\epsilon^{-\mu})H}U(-T+\epsilon^{-\mu})u(T-\epsilon^{-\mu})-u(0)\right].
$$
Using Strichartz estimate \eqref{SE1} in \eqref{eq:F2_inicio} leads to
\begin{equation}
\label{eq:F2_La_Lr}
\begin{split}
\left\| F_2\right\|_{L_{[T,+\infty)}^{\bar a}L_x^{\bar r}}
&\leq\left\| e^{-itH}\left[U(-T+\epsilon^{-\mu})u(T-\epsilon^{-\mu})-u(0)\right] \right\|^{1-\theta}_{L_{[T,+\infty)}^{k}L_x^{l}}
\left\| F_2 \right\|^{\theta}_{L_{[T,+\infty)}^{\bar p}L_x^{\infty}}\\
&\lesssim\left(\left\|u\right\|_{L^\infty_t L^2_x}\right)^{1-\theta} \left\|F_2\right\|^{\theta}_{L_{[T,+\infty)}^{\bar p} L_x^{\infty}}
\lesssim \left\|F_2\right\|^{\theta}_{L_{[T,+\infty)}^{\bar p} L_x^{\infty}}.
\end{split}
\end{equation}

For $t>T$ and using the dispersive estimate \eqref{eq:dispersive_estimate}, we have
$$
\|F_2(t)\|_{L_x^\infty} \leq \int_0^{T-\varepsilon^{-\mu}} \|e^{-i(t-s)H} N(u(s))\|_{L_x^\infty} ds \lesssim \int_0^{T-\varepsilon^{-\mu}} (t-s)^{-\frac{3}{2}} \|N(u(s))\|_{L_x^1} ds. 
$$
Using Lemma \ref{hartree_holder},
$$
\|N(u(s))\|_{L^1} \leq \|u(s)\|_{L_x^{p_1}}^p \|u(s)\|_{L_x^{p_1}}^{p-1},
$$
where $1+\frac{\gamma}{3}=\frac{p}{p_1} + \frac{p-1}{p_1}$. That is, $p_1=\frac{3(2p-1)}{3+\gamma}$. Since $\frac{5+\gamma}{3}<p<3+\gamma$, it is easy to see that $2<p_1<6$. Thus, by the Sobolev inequality and the hypothesis \eqref{E},
$$
\|F_2(t)\|_{L_x^\infty} \lesssim \int_0^{T-\epsilon^{-\mu}} (t-s)^{-\frac{3}{2}} ds \lesssim (t-T+\varepsilon^{-\mu})^{-\frac{1}{2}}.
$$
Moreover,
$$
\|F_2\|_{L_{[T,+\infty)}^{\bar p} L_x^{\infty}} \lesssim \left( \int_T^\infty (t-T+\varepsilon^{-\mu})^{-\frac{\bar p}{2}} dt \right)^{\frac{1}{\bar p}} \lesssim \varepsilon^{\mu\left(\frac{1}{2}-\frac{1}{\bar p}\right)},
$$
provided that $\bar p>2$. Thus, from \eqref{eq:F2_La_Lr} we get
$$
\|F_2\|_{L_{[T,+\infty)}^{\bar a}L_x^{\bar r}} \lesssim \varepsilon^{\mu\left(\frac{1}{2}-\frac{1}{\bar p}\right)\theta} =: \varepsilon^{\nu_2}.
$$

\ Now, we choose $\theta \in (0,1)$ and $k,l,\bar p$ satisfying \eqref{interp_kl_pinfty}, such that $(k,l)$ is $L^2$-admissible and $\bar p >2$.

Since $(k,l)$ is $L^2$-admissible and $(\bar a,\bar r)$ verifies \eqref{Hs_equation} we get (using \eqref{interp_kl_pinfty})
$$
\bar p = \frac{4(1-\theta)(p-1)}{2+\gamma-3\theta(p-1)}.
$$
The condition $\bar p>2$ implies $\theta > \frac{\gamma+4-2p}{p-1}$, so $p<\frac{\gamma+4}{2}$. Note that, $l=\bar r \theta \in [2,6]$, or 
$$\frac{2}{\bar r} \leq \theta <  \frac{6}{\bar r},$$
and $k=\frac{4l}{3l-6}\geq 2.$ Thus, if $\frac{5+\gamma}{3} < p < \frac{\gamma+4}{2}$, we take $\theta = \left(\frac{\gamma+4-2p}{p-1}\right)^+$. If $\frac{\gamma+4}{2}\leq p<\gamma+3$, we take $\theta = \frac{2}{\bar r}$, which implies that $l=2$ and $k=\infty$.

\ Finally, defining $\gamma := \min\{ \nu_1, \nu_2 \}$, recalling \eqref{T0} and 
$
e^{-i(t-T)H}u(T) = e^{-itH}u_0 + iF_1 + iF_2,
$
we obtain \eqref{norm-small}.

\end{proof}

We now show the following lemma which was used in Step 1, Eq. \eqref{eq:tree-factors}, of the previous result.

\begin{lemma}
\label{bound-interval-nabla_u}
Let $u$ be a solution of \eqref{GHP} satisfying
$$ \sup_{t \in [0,\infty)} \|u(t)\|_{H^1} \leq E,$$
and $(q,r)$, $(m,s)$ be the $L^2$-admissible pairs used in Lemma \ref{linevo}.
Then for any $T \geq 0$ and $\tau>0$
\begin{equation}
    \|\nabla u\|_{L_t^q L_x^r ([T,T+\tau]\times \R^3)}^q \lesssim \langle \tau \rangle.
\end{equation}
\end{lemma}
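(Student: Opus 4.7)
The plan is to combine the local well-posedness result (the Proposition preceding Lemma \ref{L:NL}) with a partition-of-time argument. The essential observation is that in that local result the time of existence depends only on the $H^1$ norm of the initial datum, so the uniform bound $\|u(t)\|_{H^1}\leq E$ on $[0,\infty)$ produces uniform local Strichartz bounds on any sufficiently short subinterval, which can then be summed in time.

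More precisely, I would first appeal to local well-posedness to fix $\tau_0 = \tau_0(E) > 0$ and $C_* = C_*(E)$ such that, for every $t_0 \geq 0$, the solution emanating from $u(t_0)$ exists on $[t_0, t_0+\tau_0]$ and satisfies $\|\Lambda u\|_{S(L^2, [t_0, t_0+\tau_0])} \leq C_*$. In particular, since $(q,r)$ is $L^2$-admissible, this implies $\|\Lambda u\|_{L^q_t L^r_x([t_0, t_0+\tau_0])} \leq C_*$. A direct computation gives
$$r = \frac{4(\gamma+2)}{3+2\gamma-2\epsilon} \leq \frac{8}{3-2\epsilon} < 3$$
for every $\gamma\in(0,3)$ and small $\epsilon>0$, and analogously the spatial exponent of $(m,s)$ is strictly less than $3$. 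Hence the norm equivalence $\|\nabla\cdot\|_{L^r}\sim \|\Lambda\cdot\|_{L^r}$ from Section \ref{sec2} applies and yields $\|\nabla u\|_{L^q_t L^r_x([t_0, t_0+\tau_0])}\lesssim C_*$, with the identical conclusion for $(m,s)$.

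Second, I would partition $[T, T+\tau]$ into $N\leq \tau/\tau_0 + 1$ consecutive subintervals $I_j$ of length at most $\tau_0$. Applying the previous step on each $I_j$ (using $\|u(t)\|_{H^1}\leq E$ uniformly) and exploiting additivity of the $L^q_t$ norm over disjoint time intervals,
$$\|\nabla u\|_{L^q_t L^r_x([T, T+\tau])}^q = \sum_{j=1}^N \|\nabla u\|_{L^q_t L^r_x(I_j)}^q \leq N\,C_*^q \lesssim 1+\tau \lesssim \langle\tau\rangle,$$
with the analogous chain for $(m,s)$. The argument is essentially a bookkeeping consequence of the uniform-in-time local theory; the only point worth verifying is that both spatial exponents are strictly below $3$, so that the $\Lambda$-versus-$\nabla$ equivalence can be invoked to pass from the natural local Strichartz bound for $\Lambda u$ to the required bound for $\nabla u$. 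This is a short elementary check, and no deeper obstacle arises.
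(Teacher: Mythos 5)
Your proposal is correct and follows essentially the same route as the paper: the paper also derives a uniform local space-time bound on intervals of length $\tau_0=\tau_0(E)$ (there via a direct bootstrap in an $X_\tau$-norm using the nonlinear estimate from Lemma \ref{linevo}, rather than by citing the local well-posedness proposition), and then partitions $[T,T+\tau]$ into $O(\langle\tau\rangle/\tau_0)$ subintervals and sums the $q$-th powers. Your verification that the spatial exponents $r$ and $s$ lie strictly below $3$, so the equivalence $\|\nabla\cdot\|_{L^r}\sim\|\Lambda\cdot\|_{L^r}$ applies, is exactly the point the paper also relies on.
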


\begin{proof}
The proof follows a standard argument, which can be found in \cite{ARORA-GH-19} and  \cite{CAZENAVEBOOK}.
We take the value $u(T)$ as the initial data, with the solution being well-defined on the interval $I_\tau = [T,T+\tau]$.

\ For any $T>0$ and $\tau>0$, define
$$
X_\tau = \{ u \in L_{I_\tau}^\infty H_x^1 : \nabla u \in L_{I_\tau}^m L_x^s \cap L_{I_\tau}^{4^+} L_x^{3^-}, u \in L_{I_\tau}^q L_x^r \cap L_{I_\tau}^{4^+} L_x^{3^-}  \}.
$$

First suppose $\tau$ sufficiently small so that $\|u\|_{X_\tau} < \beta < 1$. By the computation performed in \eqref{eq:F1_bounds},
$$\|u\|_{X_\tau} \lesssim \|u(T)\|_{H^1} + \| u \|_{L_t^q L_x^r}^{(1-s_c)(2p-2)} \| \nabla u \|_{L_t^m L_x^s}^{s_c(2p-2)} \|u\|_{ L_t^{4^+} L_x^{3^-} }^{1-s_c} \| \nabla u \|_{L_t^{4^+} L_x^{3^-} }^{s_c},
$$
and using the definition of $X_\tau$,
$$
\|u\|_{X_\tau} \lesssim \|u(T)\|_{H^1} + \|u\|_{X_\tau}^{(1-s_c)(2p-1)} \|u\|_{X_\tau}^{s_c(2p-1)} = \|u(T)\|_{H^1} + \|u\|_{X_\tau}^{2p-1}.
$$
Using the smallness of $\|u\|_{X_\tau}$, we have that $\|u\|_{X_\tau} \leq c_{N,p}\|u(T)\|_{H^1}$. 
Subdividing the interval $[T,T+\tau]$ into intervals $I_j$ of length $\tau_0$ such that $\|u\|_{X_{I_j}} \lesssim \beta$, where $\beta=\beta(E)$.
Let \( k = \left\lceil \frac{\tau}{\tau_0} \right\rfloor  \). We use the above argument on each $I_j$, obtaining $\|u\|_{X_{I_j}} \lesssim \|u(T+j \tau_0)\|_{H^1(\R^N)}$. Adding all the pieces, we obtain
$$
\|\nabla u\|_{L_{I_\tau}^q L_x^r}^q \lesssim \sum_{j=0}^{k+1} \int_{T+j\tau_0}^{T+(j+1)\tau_0} \|\nabla u\|_{L_x^r}^q d\tau \lesssim \tau_0 (1+k) \lesssim \langle \tau \rangle.
$$
\end{proof}

\begin{proof}[\bf Proof of Proposition \ref{scattering_criterion}]

Choose $\varepsilon$ is small enough so that, by Lemma \ref{linevo}, $$\left\| e^{-i\cdot H} u(T)\right\|_{L^{\bar a}_{[0,+\infty)} L^{\bar r}_x}  = \left\| e^{-i(\cdot-T)H}  u(T)\right\|_{L^{\bar a}_{[T,+\infty)} L^{\bar r}_x} \lesssim \varepsilon^\gamma\leq \delta_{sd},
$$
which implies that the norm $\|u\|_{L^{\bar a}_{[T,+\infty)} L^{\bar r}_x}$ is bounded, by Theorem  (global well-possedness). Thus, using Lemma \ref{bound-scattering} we conclude that $u$ scatters forward in time in $H^1$.

\end{proof}


\section{\bf VARIATIONAL ANALYSIS}\label{sec:variational}

We recall a Gagliardo-Nirenberg type inequality (when $N=3$), see \cite{ARORA-ROUDENKO-22}. Define $A$ and $B$ such that $A+B=2p$ by
$$A = 3+\gamma-p \qquad \qquad B = 3p-(3+\gamma) .$$
A direct computation shows that 
\begin{equation}
\label{eq:AB_sigmac}
A+2\sigma_c = B\sigma_c,\quad \textnormal{with}\quad \sigma_c = \frac{1-s_c}{s_c}.
\end{equation}

\begin{lemma}[\cite{ARORA-GH-19}, \cite{ARORA-ROUDENKO-22}]
If $p\geq 2$ and $0<\gamma<N$, then the potential $P(u)$ verifies 
$$
P(u) \leq C_{op} \|u\|_2^A \|\nabla u\|_2^B.
$$
Moreover, the equality is attained on ground state solutions $Q$, which solve
$$
-Q + \Delta Q + (I_\gamma \ast |Q|^p) |Q|^{p-2} Q = 0,
$$
where
$$
C_{op} = \frac{2p}{B} \left(\frac{A}{B} \right)^{\frac{B-2}{2}} \frac{P(Q)}
{\|Q\|^{-2(p-1)}_2}.
$$
\end{lemma}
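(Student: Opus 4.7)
The plan splits into two essentially independent components: (i) prove the inequality itself by Hardy--Littlewood--Sobolev plus Gagliardo--Nirenberg, and (ii) identify the sharp constant with a ground state via a variational argument.

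For (i), I would apply Lemma \ref{hartree_holder} to $f = g = |u|^p$ with output exponent $r = 1$; the constraint $1 + \gamma/3 = 2/q$ forces $q = 6/(3+\gamma)$, and hence
\[
P(u) = \|(I_\gamma * |u|^p)\,|u|^p\|_{L^1} \leq \|u\|_{L^{6p/(3+\gamma)}}^{2p}.
\]
The Gagliardo--Nirenberg inequality interpolating $L^2$ with $L^6$ (the latter controlled by Sobolev embedding of $\dot{H}^1$) then yields $\|u\|_{L^{6p/(3+\gamma)}} \leq \|u\|_{L^2}^{A/(2p)} \|\nabla u\|_{L^2}^{B/(2p)}$, where the interpolation exponents are forced by the algebraic relations $A+B = 2p$ and $3A+B = 6+2\gamma$ built into the definition of $A,B$. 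Raising to the $2p$-th power gives the inequality with some finite constant.

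For (ii), define the Weinstein functional $J(u) := \|u\|_{L^2}^A \|\nabla u\|_{L^2}^B / P(u)$ on $H^1 \setminus \{0\}$ and set $C_{op}^{-1} := \inf J$. The same scaling identities imply that $J$ is invariant under $u \mapsto \mu \, u(\lambda \cdot)$. Existence of a positive radial minimizer in the stated range of $p$ is furnished by Moroz--van Schaftingen \cite{MOROZ-VSchaf-2013}, which I would invoke as a black box. Writing the Euler--Lagrange equation
\[
\tfrac{A}{\|u\|_2^2} u - \tfrac{B}{\|\nabla u\|_2^2} \Delta u = \tfrac{2p}{P(u)} (I_\gamma * |u|^p)|u|^{p-2} u,
\]
and using the two scaling freedoms to normalize coefficients, any such minimizer can be rescaled into a solution $Q$ of $-Q + \Delta Q + (I_\gamma * |Q|^p)|Q|^{p-2} Q = 0$. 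By the variational characterization $J(Q) = C_{op}^{-1}$, so equality is attained at $Q$ and $C_{op} = P(Q)/(\|Q\|_{L^2}^A \|\nabla Q\|_{L^2}^B)$.

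To extract the explicit form of $C_{op}$, I would derive two Pohozaev-type identities for $Q$. Pairing the equation with $Q$ yields $\|Q\|_2^2 + \|\nabla Q\|_2^2 = P(Q)$. Pairing with $x \cdot \nabla Q$ and exploiting the symmetrization
\[
\int x \cdot \nabla(I_\gamma * |Q|^p)\, |Q|^p \, dx = -\tfrac{3-\gamma}{2}\, P(Q),
\]
obtained by swapping $x \leftrightarrow y$ in the double integral, yields $3\|Q\|_2^2 + \|\nabla Q\|_2^2 = \frac{3+\gamma}{p} P(Q)$. Solving this $2\times 2$ system produces the fundamental identities $\|Q\|_{L^2}^2 = \frac{A}{2p} P(Q)$ and $\|\nabla Q\|_{L^2}^2 = \frac{B}{2p} P(Q)$, which inserted into $C_{op} = P(Q)/(\|Q\|_2^A \|\nabla Q\|_2^B)$ and rearranged via $A+B = 2p$ give the announced formula. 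The only nontrivial obstacle is the existence step: the nonlocal term $P$ is not compact on $H^1$ under translations or dilations, so direct minimization of $J$ fails, and one must appeal to radial Strauss-type compactness, which is precisely what \cite{MOROZ-VSchaf-2013} furnishes. Uniqueness of $Q$ remains open in general, but $C_{op}$ depends only on the Pohozaev-determined invariants $\|Q\|_2$ and $\|\nabla Q\|_2$, hence is the same for every ground state.
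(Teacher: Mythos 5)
The paper does not prove this lemma at all --- it is quoted directly from \cite{ARORA-GH-19, ARORA-ROUDENKO-22} --- and your reconstruction is essentially the standard Weinstein-type argument used in those references: Hardy--Littlewood--Sobolev with $r=1$, $q=\tfrac{6}{3+\gamma}$, Gagliardo--Nirenberg interpolation with exponents $\tfrac{A}{2p},\tfrac{B}{2p}$ (your exponent bookkeeping via $A+B=2p$, $3A+B=6+2\gamma$ checks out), then the Euler--Lagrange equation for the Weinstein functional, rescaling, and the two Pohozaev identities, which correctly give $\|Q\|_2^2=\tfrac{A}{2p}P(Q)$ and $\|\nabla Q\|_2^2=\tfrac{B}{2p}P(Q)$. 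Two remarks. First, your computation actually yields $C_{op}=\tfrac{2p}{B}\bigl(\tfrac{A}{B}\bigr)^{\frac{B-2}{2}}\|Q\|_2^{-2(p-1)}$, which is \emph{not} literally the displayed formula $\tfrac{2p}{B}\bigl(\tfrac{A}{B}\bigr)^{\frac{B-2}{2}}\tfrac{P(Q)}{\|Q\|_2^{-2(p-1)}}$; the printed expression appears to be a typo (it is inconsistent with the paper's own later identity $C_{op}=P(Q)/(\|Q\|_2^A\|\nabla Q\|_2^B)$ in \eqref{eq:pohozaev}), so you should say explicitly that you obtain the corrected form rather than asserting agreement with the announced one. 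Second, the existence step is attributed a bit loosely: Moroz--van Schaftingen provide ground states of the elliptic equation, not directly a minimizer of the Weinstein functional $J$; the standard route (as in Weinstein and Arora--Roudenko) is to minimize $J$ directly using Schwarz symmetrization and the compact embedding of radial $H^1$ into $L^q$, $2<q<6$, after normalizing by the two scaling freedoms. This is the step you declared a black box, so it is not a gap in substance, but the citation should be adjusted. Your closing observation that all ground states share the same $\|Q\|_2$, $\|\nabla Q\|_2$, $P(Q)$ (via the common minimal action and the Pohozaev relations) is the right way to handle the lack of uniqueness.
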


Moreover, the Pohozaev identities are used 
\begin{equation}
\label{eq:pohozaev}
E_0(Q) = \frac{B-2}{2B} \|\nabla Q\|_2^2 = \frac{B-2}{2A} \|Q\|_2^2,\quad P(Q) = \frac{2p}{B} \|\nabla Q\|_2^2,\quad C_{op} = \frac{P(Q)}{\|Q\|_2^A \|\nabla Q\|_2^B}.
\end{equation}

\begin{proposition}(Coercivity)
\label{prop:sem_chi}
Let $u \in H^1(\R^N)$. If there exists $\delta > 0$ such that 
\begin{equation}
\label{eq:Pu_Mu_PQ_MQ}
P(u) M(u)^{\sigma_c} < (1-\delta) P(Q) M(Q)^{\sigma_c},   
\end{equation}
then there is constant $\delta'>0$ (depending on $\delta$) such that
$$
\|\nabla u\|_2^2 - \frac{B}{2p} P(u) \geq \delta' P(u).
$$
\end{proposition}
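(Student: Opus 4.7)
The plan is to derive the coercivity estimate by inverting the sharp Gagliardo--Nirenberg inequality and recasting everything in terms of the scale-invariant quantity $M(u)^{\sigma_c} P(u)$ that appears in the hypothesis \eqref{eq:Pu_Mu_PQ_MQ}. The key algebraic fact I will exploit is $A/(B-2) = \sigma_c$, which is equivalent to the identity \eqref{eq:AB_sigmac}, since it is precisely what will allow the combination $M(u)^{A/B} P(u)^{(B-2)/B}$ to collapse into a single power of the controlled quantity $M(u)^{\sigma_c} P(u)$.

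First I would rewrite the sharp Gagliardo--Nirenberg inequality $P(u)\leq C_{op} M(u)^{A/2}\|\nabla u\|_2^B$ as a lower bound on the gradient,
\[
\|\nabla u\|_2^2 \;\geq\; C_{op}^{-2/B}\, M(u)^{-A/B}\, P(u)^{2/B},
\]
and substitute it into $K(u):=\|\nabla u\|_2^2-\tfrac{B}{2p}P(u)$, factoring out $\tfrac{B}{2p}P(u)$ to obtain
\[
K(u) \;\geq\; \frac{B}{2p}\, P(u)\left[\frac{2p}{B}\cdot \frac{1}{C_{op}^{2/B}\, M(u)^{A/B}\, P(u)^{(B-2)/B}} \;-\; 1\right].
\]
Next, I would simplify the bracket using the explicit value $C_{op}=P(Q)/(M(Q)^{A/2}\|\nabla Q\|_2^{B})$ together with the Pohozaev identity $P(Q)=\tfrac{2p}{B}\|\nabla Q\|_2^2$ from \eqref{eq:pohozaev}. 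A short computation at the ground state gives $C_{op}^{2/B} M(Q)^{A/B} P(Q)^{(B-2)/B}=\tfrac{2p}{B}$, and then the identity $A/B = \tfrac{B-2}{B}\sigma_c$ converts the quotient into a single power of the scale-invariant ratio:
\[
\frac{2p}{B}\cdot \frac{1}{C_{op}^{2/B}\, M(u)^{A/B}\, P(u)^{(B-2)/B}} \;=\; \left(\frac{M(u)^{\sigma_c}P(u)}{M(Q)^{\sigma_c}P(Q)}\right)^{-(B-2)/B}.
\]

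Finally, I would apply the hypothesis \eqref{eq:Pu_Mu_PQ_MQ}. Since $B>2$ (which follows from $p>(5+\gamma)/3$, giving $B=3p-3-\gamma>2$) and the ratio is strictly less than $1-\delta$, raising to the negative exponent $-(B-2)/B$ produces a value strictly greater than $(1-\delta)^{-(B-2)/B}>1$. This yields
\[
K(u) \;\geq\; \frac{B}{2p}\bigl[(1-\delta)^{-(B-2)/B}-1\bigr]\,P(u) \;=:\; \delta'\, P(u),
\]
with $\delta'>0$ depending only on $\delta$, $B$, and $p$. The main obstacle is purely bookkeeping of exponents; all the non-trivial content sits in the one algebraic identity $A=(B-2)\sigma_c$ from \eqref{eq:AB_sigmac}, and without it the term $M(u)^{A/B} P(u)^{(B-2)/B}$ would not fit as a power of the quantity $M(u)^{\sigma_c} P(u)$ bounded by the hypothesis.
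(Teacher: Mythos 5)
Your argument is correct and is essentially the paper's proof in rearranged form: both invert the sharp Gagliardo--Nirenberg inequality, use the Pohozaev identity $P(Q)=\tfrac{2p}{B}\|\nabla Q\|_2^2$ to express $C_{op}$ at the ground state, and collapse the exponents via $A=(B-2)\sigma_c$ to bound $P(u)\leq \tfrac{2p}{B}(1-\delta)^{(B-2)/B}\|\nabla u\|_2^2$. You even land on the same constant $\delta'=\tfrac{B}{2p}\bigl[(1-\delta)^{-(B-2)/B}-1\bigr]$ as the paper.
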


\begin{proof}
By the definition of the Sobolev optimal constant
$P(u) \leq C_{op} \|u\|^A_2 \|\nabla u\|^B_2$. Thus (by \eqref{eq:AB_sigmac})
\begin{equation}
\begin{split}
\label{eq:Pu_1}
P(u)^{\frac{B}{2}} &\leq C_{op} P(u)^{\frac{B}{2}-1} M(u)^{\frac{A}{2}} \|\nabla u\|_2^B \\
&\leq C_{op} \left( P(u) M(u)^{\sigma_c} \right)^{\frac{B}{2}-1} \|\nabla u\|_2^B.
\end{split}
\end{equation}
On the other hand, since $C_{op} = \frac{P(Q)}{\|Q\|_2^A \|\nabla Q \|_2^B}$, we have
$$
C_{op} = \frac{P(Q)}{M(Q)^{\frac{A}{2}} \left( \frac{B}{2p} P(Q) \right)^{\frac{B}{2}}} = \left( \frac{2p}{B} \right)^{\frac{B}{2}} \frac{1}{M(Q)^{\frac{A}{2}} P(Q)^{\frac{B}{2}-1}}.
$$
Using \eqref{eq:AB_sigmac} again, we get
$
C_{op} = \left( \frac{2p}{B} \right)^{\frac{B}{2}} \frac{1}{\left( M(Q)^{\sigma_c} P(Q) \right)^{\frac{B}{2}-1}}.
$
Substituting this in \eqref{eq:Pu_1},
$$
P(u)^{\frac{B}{2}} \leq  \left( \frac{P(u) M(u)^{\sigma_c}}{P(Q) M(Q)^{\sigma_c}} \right)^{\frac{B}{2}-1} \left(\frac{2p}{B} \|\nabla u\|^2_2 \right)^{\frac{B}{2}},
$$
which implies (by \eqref{eq:Pu_Mu_PQ_MQ}),
$
P(u) \leq \frac{2p}{B} (1-\delta)^{\frac{B-2}{B}} \|\nabla u\|_2^2.
$
Hence,
\begin{equation*}
    \|\nabla u\|_2^2 \geq \frac{B}{2p} \frac{1}{(1-\delta)^{\frac{B-2}{B}}} P(u).
\end{equation*}
Therefore,
$$
\|\nabla u\|_2^2 - \frac{B}{2p} P(u) \geq \frac{B}{2p} \left( \frac{1}{(1-\delta)^{\frac{B-2}{B}}} - 1 \right) P(u)=\delta'P(u).
$$
\end{proof}

\ We also obtain the localized version of Proposition \ref{prop:sem_chi}. Let $\chi(x)$ denote a radial function with support on the ball $|x|\leq 1$, with value identically $1$ on the ball $|x|\leq \frac{1}{2}$ and smoothly decreasing on $\frac{1}{2} \leq |x| \leq 1$. For $R>0$ define the function $\chi_R$ such that $\chi_R(x) = \chi(\frac{x}{R})$. 

\begin{proposition}(Coercivity on balls)
\label{prop:com_chi}
Let $u \in H^1(\R^N)$. If there exists $\delta > 0$ such that 
If $P(u) M(u)^{\sigma_c} < (1-\delta) P(Q) M(Q)^{\sigma_c}$, then there is constant $\delta'>0$ (depending on $\delta$) such that
$$
\int |\nabla (\chi_R u)|^2 dx - \frac{B}{2p} P(\chi_R u) \geq \delta' P(\chi_R u).
$$
\end{proposition}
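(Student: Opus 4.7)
The plan is to deduce Proposition \ref{prop:com_chi} as an immediate consequence of the unlocalized Proposition \ref{prop:sem_chi}, applied to the truncated function $v := \chi_R u$. Note $v \in H^1(\R^3)$ because $\chi_R$ is smooth and compactly supported. The only thing that needs to be checked is that the scale-invariant quantity $P(v)\, M(v)^{\sigma_c}$ inherits the strict subcriticality assumed for $u$; once this is in hand, the conclusion of Proposition \ref{prop:sem_chi} applied to $v$ is exactly the desired estimate, and one may even take the same coercivity constant $\delta'$ produced there.

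To verify this inheritance, I will exploit the pointwise monotonicity of the cutoff. Since $0 \leq \chi_R \leq 1$ we have $|\chi_R u|(x) \leq |u|(x)$ for every $x$, which immediately yields $M(\chi_R u) \leq M(u)$. For the potential energy, the crucial observation is that the Riesz kernel $I_\gamma(x) = |x|^{\gamma - 3}$ is nonnegative, so convolution against $I_\gamma$ preserves pointwise inequalities: $(I_\gamma * |\chi_R u|^p)(x) \leq (I_\gamma * |u|^p)(x)$. Consequently
\[
P(\chi_R u) \;=\; \int (I_\gamma * |\chi_R u|^p)\,|\chi_R u|^p \, dx \;\leq\; \int (I_\gamma * |u|^p)\,|u|^p \, dx \;=\; P(u).
\]
Multiplying these two monotonicity inequalities together with the hypothesis on $u$ gives
\[
P(\chi_R u)\, M(\chi_R u)^{\sigma_c} \;\leq\; P(u)\, M(u)^{\sigma_c} \;<\; (1 - \delta)\, P(Q)\, M(Q)^{\sigma_c},
\]
so $v = \chi_R u$ satisfies the hypothesis of Proposition \ref{prop:sem_chi} with the same $\delta$. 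Applying that proposition to $v$ produces $\|\nabla(\chi_R u)\|_2^2 - \tfrac{B}{2p} P(\chi_R u) \geq \delta' P(\chi_R u)$, which is the claim.

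Since the reduction is purely algebraic and relies only on the sign of $I_\gamma$ together with $0 \leq \chi_R \leq 1$, I do not anticipate any real obstacle. In particular, neither the radial symmetry of $\chi_R$ nor the specific value of $R$ enters the argument quantitatively; these features will only become important later when Proposition \ref{prop:com_chi} is used in conjunction with a Morawetz identity, where one must also control the error between $\|\nabla(\chi_R u)\|_2^2$ and the localized kinetic energy appearing naturally in the virial computation.
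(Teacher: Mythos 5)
Your proposal is correct and follows essentially the same route as the paper: both reduce to the unlocalized coercivity statement by noting $M(\chi_R u)\le M(u)$ and $P(\chi_R u)\le P(u)$ (the latter from $0\le\chi_R\le1$ and the positivity of the Riesz kernel) and then applying Proposition \ref{prop:sem_chi} to $\chi_R u$. Your write-up merely spells out the kernel-positivity justification, which the paper leaves implicit.
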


\begin{proof}
Observe that $\|\chi_R u\|^2_{L^2} \leq  \|u\|^2_{L^2}$ and $P(\chi_R u) \leq P(u)$, using the hypothesis we have that
$$
P(\chi_R u) M(\chi_R u)^{\sigma_c} < (1-\delta)P(Q)M(Q)^{\sigma_c}.
$$
Applying Proposition \ref{prop:sem_chi}, we obtain $ \int |\nabla (\chi_R u)|^2 dx - \frac{B}{2p} P(\chi_R u) \geq \delta' P(\chi_R u).$
\end{proof}

\ Now, we show Corollary \ref{two_conds_one_cond-intro}. It is a consequence of our theorem. To this end, we use the scattering conditions below the threshold to imply the unique condition \eqref{eq:only_one_hypo}. Thus, by Theorem \ref{theo:scattering}, we obtain the desired result.



\begin{proof}[\bf Proof of Corollary \ref{two_conds_one_cond-intro}]

The optimal Sobolev constant is
$
C_{op} = \frac{P(Q)}{\|Q\|^A \|\nabla Q\|^B},
 $
 where $P(Q) = \frac{2p}{B} \|\nabla Q\|_2^2.$ Thus, using Pohozaev inequalities,
 \begin{equation}
 \label{eq:sobolev_constant_2}
     C_{op} = \frac{2p}{B} \frac{\|\nabla Q\|_2^2}{\|Q\|_2^A \|\nabla Q\|_2^B} = \frac{2p}{B\|Q\|_2^A \|\nabla Q\|_2^{B-2}}.
 \end{equation}

By condition \eqref{eq:cond1_ME}, there is a $\delta>0$ such that
\begin{equation}
\label{eq:ME_delta}
M(u)^{\sigma_c} E(u) < (1-\delta) M(Q)^{\sigma_c} E_0(Q).
\end{equation}
Thus,
\begin{align*}
    M(u)^{\sigma_c} E(u) &= \|u\|_2^{2\sigma_c} \frac{1}{2} \|\Lambda u\|_2^2 - \frac{1}{2p} P(u(t)) \|u\|_2^{2\sigma_c} \\
    &\geq \frac{1}{2} \left(  \|\Lambda u \|_2 \|u\|_2^{\sigma_c}  \right)^2 - \frac{C_{op}}{2p} \|\nabla u\|_2^B \|u\|_2^{A+2\sigma_c}.
\end{align*}
Since $V \geq 0$, $\|\nabla u\|_2^2 \leq \|\Lambda u\|_2^2$ and applying \eqref{eq:AB_sigmac}, it follows that
\begin{align*}
    M(u)^{\sigma_c} E(u) &\geq \frac{1}{2} \left(  \|\Lambda u \|_2 \|u\|_2^{\sigma_c}  \right)^2 - \frac{C_{op}}{2p} \|\Lambda u\|_2^B \|u\|_2^{A+2\sigma_c} \\
    &= \frac{1}{2} \left(  \|\Lambda u \|_2 \|u\|_2^{\sigma_c}  \right)^2 - \frac{C_{op}}{2p} \left( \|\Lambda u\|_2 \|u\|_2^{\sigma_c} \right)^B.
\end{align*}
If we define the function $g(x) = \frac{1}{2} x^2 - \frac{C_{op}}{2p} x^B$, then the relation \eqref{eq:ME_delta} implies
\begin{equation}
\label{eq:1-delta}
(1-\delta) M(Q)^{\sigma_c}E_0(Q) > g( \|\Lambda u \|_2 \|u\|_2^{\sigma_c} ).    
\end{equation}
The function $g$ has critical points in $0$ and $x_0=\|Q\|_2^{\sigma_c} \|\nabla Q\|_2$ and $g(x_0)=M(Q)^{\sigma_c} E_0(Q)$. Therefore,
$$
(1-\delta) g(\|Q\|_2^{\sigma_c} \|\nabla Q\|_2) > g( \|\Lambda u \|_2 \|u\|_2^{\sigma_c} ).
$$
Since $g$ is strictly increasing in $(0,x_0)$ and by \eqref{eq:cond2}, one has 
$$
 \|Q\|_2^{\sigma_c} \|\nabla Q\|_2 > \|\Lambda u \|_2 \|u\|_2^{\sigma_c},
$$
which implies \eqref{eq:sup_u_Lambda_u}, that is, the solution $u$ is global.

\ On the other hand, by Pohozaev inequalities \eqref{eq:pohozaev}, we have 
$
E_0(Q) M(Q)^{\sigma_c} = \frac{B-2}{2B} \left( \|\nabla Q\|_2 \|Q\|_2^{\sigma_c} \right)^2.
$
Thus \eqref{eq:1-delta} gives
$$
1-\delta > \frac{B}{B-2} \left( \frac{\|\Lambda u \|_2 \|u\|_2^{\sigma_c}}{\|\nabla Q\|_2 \|Q\|_2^{\sigma_c}} \right)^2 - C_{op} \frac{B}{p(B-2)} \frac{1}{(\|\nabla Q\|_2 \|Q\|_2^{\sigma_c})^2} \left( \|\Lambda u \|_2 \|u\|_2^{\sigma_c}  \right)^B,
$$
and using the expression of $C_{op}$ in \eqref{eq:sobolev_constant_2} and the identity \eqref{eq:AB_sigmac},
$$
\frac{C_{op} B}{p(B-2)} \frac{1}{(\|\nabla Q\|_2 \|Q\|_2^{\sigma_c})^2} = \frac{2}{B-2} \frac{1}{ \|\nabla Q\|_2^B \|Q\|_2^{A+2\sigma_c} } = \frac{2}{B-2} \frac{1}{\left(  \|\nabla Q\|_2 \|Q\|_2^{\sigma_c}  \right)^B}.
$$
Thus,
$$
1-\delta > \frac{B}{B-2} \left( \frac{\|\Lambda u \|_2 \|u\|_2^{\sigma_c}}{\|\nabla Q\|_2 \|Q\|_2^{\sigma_c}} \right)^2 - \frac{2}{B-2} \left( \frac{\|\Lambda u \|_2 \|u\|_2^{\sigma_c}}{\|\nabla Q\|_2 \|Q\|_2^{\sigma_c}} \right)^B.
$$
Denoting $y(t) = \frac{\|\Lambda u \|_2 \|u\|_2^{\sigma_c}}{\|\nabla Q\|_2 \|Q\|_2^{\sigma_c}}$. We have $y(t)<1$ by the first part. Observe that,
$$
1-\delta > \frac{B}{B-2} y(t)^2 - \frac{2}{B-2} y(t)^B.
$$
The function $f(y) = \frac{B}{B-2} y^2 - \frac{2}{B-2} y^B$, verifies $f'(y) = \frac{2B}{B-2} y (1-y^{B-2})$, and thus is strictly increasing in $(0,1)$ and $f(0)=0$  is a local minimum and $f(1) = 1$ is a local maximum.

The condition $\frac{B}{B-2} y^2 - \frac{2}{B-2} y^B < 1-\delta$ for some $\delta>0$, implies by continuity that there exists $\delta'>0$ such that
$y(t) < 1-\delta'$. That is,
$$
\|\Lambda u \|_2 \|u\|_2^{\sigma_c} < (1-\delta') \|\nabla Q\|_2 \|Q\|_2^{\sigma_c}.
$$
Note that
$$
P(u) M(u)^{\sigma_c} \leq C_{op} \|u\|_2^{A+2\sigma_c} \|\nabla u\|_2^B,
$$
where
$
C_{op} = \frac{P(Q)}{\|Q\|_2^A \|\nabla Q\|_2^B} = \frac{P(Q) M(Q)^{\sigma_c}}{\|Q\|_2^{A+2\sigma_c} \|\nabla Q\|_2^B}.
$
The relation \eqref{eq:AB_sigmac} yields
$$
P(u) M(u)^{\sigma_c} \leq \frac{P(Q) M(Q)^{\sigma_c}}{\|Q\|_2^{B\sigma_c} \|\nabla Q\|_2^B} \left( \|u\|_2^{\sigma_c} \|\nabla u\|_2 \right)^B.
$$
Thus
$
\frac{P(u) M(u)^{\sigma_c}}{P(Q) M(Q)^{\sigma_c}} \leq \left( \frac{\|u\|_2^{\sigma_c} \|\nabla u\|_2}{\|Q\|_2^{\sigma_c} \|\nabla Q\|_2} \right)^B < (1-\delta')^B,
$
so
$$
\sup_{t \in [0,\infty)} P(u) M(u)^{\sigma_c} < (1-\delta')^B P(Q) M(Q)^{\sigma_c}.
$$
Therefore, the solution $u$ scatters in $H^1$.
\end{proof}

\ 

\section{\bf SCATTERING: PROOF OF THEOREM}\label{sec:main-theorem}

\ 

\ 
In this section, we prove a Virial-Morawetz-type estimate to gain control over a suitable norm on large balls. The proof is concluded using the scattering criterion (Proposition \ref{scattering_criterion}). 

\ Consider the radial function
\begin{equation}
\label{eq:def_a}
a(x) = \begin{cases}
    |x|^2, \qquad |x|\leq \frac{R}{2} \\
    R|x|, \qquad |x| > R,
\end{cases}
\end{equation}
and which in the region $\frac{R}{2} \leq |x| \leq R$ verifies
\begin{equation}
\label{eq:props_a_annulus}
\partial_r a > 0, \quad \partial_r^2 a \geq 0, \quad |\partial^\alpha a| \lesssim_{\alpha} R |x|^{-\alpha+1} \quad \text{for } |\alpha| \geq 1.    
\end{equation}

We denote by $\partial_r$ the radial derivative, i.e. $\partial_r a = \nabla a \cdot \frac{x}{|x|}$. 
\begin{remark}
\begin{itemize}
\item  The angular derivative is denoted by $\hat \nabla$ and defined by
$\hat \nabla u = \nabla u - \frac{x}{|x|} \left( \frac{x}{|x|} \cdot \nabla u \right)$. It verifies that $$| \nabla u |^2 = |\partial_r u|^2 + |\hat \nabla u |^2.$$
In fact, $|\hat \nabla u|^2 = \left| \nabla u - \frac{x}{|x|} ( \frac{x}{|x|} \cdot \nabla u ) \right|^2 = |\nabla u|^2 + | \frac{x}{|x|} \cdot \nabla u|^2 + 2 Re \langle \nabla u, - \frac{x}{|x|} ( \frac{x}{|x|} \cdot \nabla u ) \rangle = |\nabla u |^2 + |\partial_r u|^2 - 2 |\frac{x}{|x|} \nabla u |^2 = |\nabla u |^2 - |\partial_r u|^2.$
\item For $|x|\leq \frac{R}{2}$, denoting $x=(x_1,x_2,...,x_N)$ and using sub-indexes to denote derivatives, we have
$$a_j := \partial_{x_j} a = 2 x_j, \quad \text{thus} \quad \nabla a = 2 x.$$
$$a_{jk} := \partial_{x_k}\partial_{x_j} a = 2 \delta_{jk}, \quad \Delta a = 2N, \qquad \Delta^2 a := \Delta (\Delta a) = 0.$$
\item  For $|x|>R$, 
$$a_j = R \frac{x_j}{|x|}, \quad \text{thus} \quad \nabla a = R \frac{x}{|x|}.$$
$$a_{jk} = \frac{R}{|x|} \left( \delta_{jk} - \frac{x_j}{|x|} \frac{x_k}{|x|} \right), \quad \Delta a = \frac{R}{|x|} (N-1), \qquad \Delta^2 a = 0.$$
\end{itemize}
   
\end{remark}


\begin{lemma} (Morawetz identity)
\label{lema:morawetz_identity}
    Let $a:\R^N \rightarrow \R$ be the smooth weight defined in \eqref{eq:def_a}. Define
\begin{equation}
    z(t) = \int a(x) |u(x,t)|^2 dx,    
\end{equation}
where $u$ is a solution of \eqref{GHP}.
Then
$$z'(t) = 2 Im \int (\nabla a \cdot \nabla u) \bar u dx,$$
\begin{equation}
\label{eq:morawetz_id}
\begin{split}
z''(t) = &\int -4\left(\frac{1}{2}-\frac{1}{p}\right) \left(I_{\gamma} * |v|^p \right) |u|^p \Delta a dx - \int \Delta^2 a |u|^2 dx + 4 \int Re(a_{jk} \bar u_j u_k) dx \\
& - \frac{4(N-\gamma)}{p} \int \int \nabla a(x) \cdot \frac{(x-y)}{|x-y|^{N-\gamma+2}} |u(y)|^p |u(x)|^p dy dx \\
& - 2 \int \nabla V \cdot \nabla a |u|^2 dx.
\end{split}
\end{equation}
\end{lemma}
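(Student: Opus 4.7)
The plan is the classical virial/Morawetz computation: differentiate $z$ twice, each time using \eqref{GHP} to eliminate $u_t$, and then separate the dispersive, potential, and Hartree contributions. For $z'$, I compute $2\operatorname{Re}\int a\, u_t\, \bar{u}\,dx$ and substitute $u_t = i(-\Delta u + Vu - F(u))$ with $F(u) = (I_\gamma \ast |u|^p)|u|^{p-2}u$. Both $V|u|^2$ and $F(u)\bar{u} = (I_\gamma \ast |u|^p)|u|^{p}$ are real and drop out of $\operatorname{Re}$; a single integration by parts on the surviving Laplacian term gives $z'(t) = 2\operatorname{Im}\int (\nabla a \cdot \nabla u)\bar{u}\,dx$, as stated.

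\textbf{Dispersive and potential pieces of $z''$.} Differentiating $z'$ once more and substituting the equation produces three integrands. The dispersive part is the standard virial calculation: three iterated integrations by parts on $2\operatorname{Re}\int \nabla a \cdot [\bar{u}\,\nabla \Delta u - \nabla u\, \overline{\Delta u}]\,dx$ rearrange into $\int [4\, a_{jk}\operatorname{Re}(\bar{u}_j u_k) - \Delta^2 a\, |u|^2]\,dx$. The potential part $-2\operatorname{Re}\int \nabla a \cdot [\bar{u}\,\nabla(Vu) - V\bar{u}\,\nabla u]\,dx$ simplifies by expanding $\nabla(Vu) = u\,\nabla V + V\nabla u$: the two $V\nabla u$ contributions cancel, leaving $-2\int (\nabla V \cdot \nabla a)\,|u|^2\,dx$.

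\textbf{Hartree piece (main obstacle).} The delicate contribution is $2\operatorname{Re}\int \nabla a \cdot [\bar{u}\,\nabla F(u) - \nabla u\, \overline{F(u)}]\,dx$. The product rule splits $\partial_k F(u)$ into a convolution-derivative piece $\partial_k(I_\gamma \ast |u|^p)\,|u|^{p-2}u$ and a pointwise piece $(I_\gamma \ast |u|^p)\partial_k(|u|^{p-2}u)$. For the pointwise piece, I use the algebraic identity
\[
\bar{u}\,\partial_k(|u|^{p-2}u) - (\partial_k u)\,|u|^{p-2}\bar{u} = \frac{p-2}{p}\,\partial_k(|u|^p),
\]
which yields $\frac{2(p-2)}{p}\int \nabla a \cdot (I_\gamma \ast |u|^p)\nabla(|u|^p)\,dx$; integrating by parts in $x$ produces a local $\Delta a$ term with coefficient $-\frac{2(p-2)}{p} = -4(\tfrac{1}{2} - \tfrac{1}{p})$ together with a nonlocal remainder. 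The convolution-derivative piece, after shifting the $y$-derivative onto the kernel through $\partial_{y_k}|x-y|^{-(N-\gamma)} = (N-\gamma)(x-y)_k|x-y|^{-(N-\gamma)-2}$, yields a second double integral. Combining the two double integrals, whose coefficients are $-2(N-\gamma)$ and $+\tfrac{2(p-2)(N-\gamma)}{p}$, collapses cleanly to $-\tfrac{4(N-\gamma)}{p}$, matching the stated identity. The principal difficulty is exactly this bookkeeping: verifying the algebraic identity above, tracking the sign when moving the $y$-derivative across the kernel, and checking that the two nonlocal contributions combine with the correct prefactor.
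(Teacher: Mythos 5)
Your proposal is correct: the paper states this lemma without proof (it is the standard virial/Morawetz computation, cf.\ \cite{ARORA-GH-19}), and your derivation reproduces it faithfully. I checked the key points: the algebraic identity $\bar u\,\partial_k(|u|^{p-2}u)-(\partial_k u)|u|^{p-2}\bar u=\tfrac{p-2}{p}\partial_k(|u|^p)$ holds (both sides equal $(p-2)|u|^{p-2}\operatorname{Re}(\bar u\,\partial_k u)$); integrating the pointwise piece by parts gives the local term with coefficient $-\tfrac{2(p-2)}{p}=-4(\tfrac12-\tfrac1p)$; and since $\nabla_x(I_\gamma*|u|^p)=-(N-\gamma)\int\frac{x-y}{|x-y|^{N-\gamma+2}}|u(y)|^p\,dy$, the two nonlocal contributions carry coefficients $-2(N-\gamma)$ and $+\tfrac{2(p-2)(N-\gamma)}{p}$, which indeed sum to $-\tfrac{4(N-\gamma)}{p}$, consistent with the $-\tfrac{4B}{p}$ coefficient used later in \eqref{eq:Mor_est_1}. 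The potential and dispersive pieces are also handled correctly. One minor slip: from $iu_t+\Delta u-Vu+F(u)=0$ you should substitute $u_t=-i(-\Delta u+Vu-F(u))=i(\Delta u-Vu+F(u))$, not $u_t=i(-\Delta u+Vu-F(u))$; with your sign every term of $z'$ and $z''$ would flip, so fix that sign (your stated conclusions already correspond to the correct one).
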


\begin{remark}
    The second line of \eqref{eq:morawetz_id} can be written as
    $$
    \frac{2(N-\gamma)}{p} \int \int (\nabla a(x) - \nabla a(y)) \cdot \frac{(x-y)}{|x-y|^{N-\gamma+2}} |u(y)|^p |u(x)|^p dy dx.
    $$
    In particular if $a(x) = |x|^2$, it becomes 
    $$ \frac{2(N-\gamma)}{p} \int \int 2(x-y) \cdot \frac{(x-y)}{|x-y|^{N-\gamma+2}} |u(y)|^p dy |u(x)|^p dx  =  
    \frac{4(N-\gamma)}{p} \int (I_\gamma \ast |u|^p) |u(x)|^p dx.$$
\end{remark}

\begin{proposition} (Morawetz estimate)
\label{prop:morawez_estimate}
Let $T>0$ and let $V : \R^3 \to \R$ be a radially symmetric potential satisfying \eqref{poten} (i), $V\geq 0$, $x \cdot \nabla V \leq 0$ and $\partial_r V \in L^q$, for $\frac{3}{2} \leq q \leq \infty$. Let $u$ be a $H^1$-solution of \eqref{GHP}  satisfying the hypothesis \eqref{eq:only_one_hypo}. For $R=R(\delta, M(u),Q) > 0$ sufficiently large we have

\begin{equation}
\label{eq:morawetz_estimate}
\frac{1}{T} \int_0^T P(\chi_R u(t)) dt \lesssim_{\delta} \begin{cases}
    \frac{R}{T} + \frac{1}{R^{\frac{(N-1) B}{N}}} + o_R(1) \quad &\text{if} \quad (N-1)B < 2N \\
    \frac{R}{T} + \frac{1}{R^2} + o_R(1) \quad &\text{if} \quad (N-1)B \geq 2N.
\end{cases}    
\end{equation}
\end{proposition}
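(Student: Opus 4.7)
The plan is to turn the identity for $z''(t)$ into a coercive lower bound involving $P(\chi_R u)$ plus controllable error terms, and then integrate in time. Concretely, let $z(t) = \int a(x)|u(x,t)|^2\,dx$ with $a$ as in \eqref{eq:def_a}. The first observation is that $|\nabla a(x)| \lesssim R$ uniformly, hence
\[
|z'(t)| \;=\; 2\left|\operatorname{Im}\int (\nabla a\cdot\nabla u)\bar u\,dx\right| \;\lesssim\; R\,\|u\|_{L^2}\|\nabla u\|_{L^2} \;\lesssim_{M(u),Q}\; R,
\]
using that \eqref{eq:only_one_hypo} implies $\|u(t)\|_{H^1}$ is bounded. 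Therefore $\frac{1}{T}\int_0^T z''(t)\,dt = \frac{z'(T)-z'(0)}{T} \lesssim \frac{R}{T}$, which provides the $R/T$ term in \eqref{eq:morawetz_estimate}.

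Next I would split every integral appearing in \eqref{eq:morawetz_id} into the interior region $\{|x|\le R/2\}$, where $a = |x|^2$, and the exterior region $\{|x| > R/2\}$. On the interior the derivatives of $a$ coincide with those of $|x|^2$, so the interior contribution is exactly the standard Hartree virial, namely $8\bigl[\|\nabla u\|_{L^2(|x|\le R/2)}^2 - \frac{B}{2p}\int_{|x|\le R/2}(I_\gamma * |u|^p)|u|^p\bigr]$ up to a harmless $-2\int\nabla V\cdot\nabla a\,|u|^2$ term that is non-negative thanks to $V\ge 0$, $V$ radial and $x\cdot\nabla V\le 0$. Using a straightforward algebraic identity (adding and subtracting cutoffs, together with $\chi_R \equiv 1$ on $|x|\le R/2$) I would rewrite this interior virial as
\[
8\Bigl[\|\nabla(\chi_R u)\|_{L^2}^2 - \tfrac{B}{2p} P(\chi_R u)\Bigr] \;+\; (\text{errors supported on } |x|\ge R/2).
\]
By the localized coercivity in Proposition \ref{prop:com_chi}, the bracket is bounded below by $\delta'P(\chi_R u)$, producing the desired positive term $8\delta' P(\chi_R u)$ in $z''(t)$.

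The hard part will be proving that the remaining terms supported on $\{|x|\ge R/2\}$ (and the exterior contributions from $\Delta^2 a$, $a_{jk}$, the nonlocal kernel, and $\Delta a$) are dominated by $R^{-\min((N-1)B/N,\,2)} + o_R(1)$. For the biharmonic term, $|\Delta^2 a|\lesssim R|x|^{-3}\lesssim R^{-2}$ on the annulus $R/2 \le |x|\le R$ and vanishes elsewhere, so $|\int\Delta^2 a\,|u|^2|\lesssim R^{-2}M(u)$, which yields the $R^{-2}$ contribution. The $a_{jk}$ exterior term can be rewritten in terms of the angular derivative $|\hat\nabla u|^2$ (non-negative and hence discardable in the lower bound) plus an annular remainder that also scales as a negative power of $R$. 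The genuinely delicate estimate is on the Hartree terms $\int(I_\gamma * |u|^p)|u|^p \Delta a$ and the nonlocal double integral over $\{|x|\ge R/2\}$: here I would apply the Hardy--Littlewood--Sobolev inequality (Lemma \ref{lemma:Hardy}) together with the radial Sobolev estimate in $L^p$ (Lemma \ref{radial-sobolev-Lp}), extracting $\|u\|_{L^{q}(|x|\ge R/2)}^{\text{exponent}} \lesssim R^{-(N-1)B/N}$ after matching exponents via the Gagliardo--Nirenberg scaling $A+B=2p$ and $A+2\sigma_c = B\sigma_c$; the $H^1$ bound absorbs the remaining norms. The dichotomy in \eqref{eq:morawetz_estimate} reflects whether this Hartree decay $R^{-(N-1)B/N}$ or the biharmonic $R^{-2}$ dominates.

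Putting everything together gives
\[
8\delta'\,P(\chi_R u(t)) \;\le\; z''(t) \;+\; C\bigl(R^{-\min((N-1)B/N,\,2)} + o_R(1)\bigr),
\]
so after integrating from $0$ to $T$, dividing by $T$, and using the $R/T$ bound on the integral of $z''$, I obtain \eqref{eq:morawetz_estimate}. The main technical obstacle I anticipate is the careful bookkeeping in the exterior Hartree estimates, where the nonlocal convolution does not decouple cleanly between the inside and outside of $B_{R/2}$; this requires splitting $|u|^p = |u|^p\chi_{|x|\le R/2} + |u|^p\chi_{|x|>R/2}$ and handling the mixed cross-term, which is exactly the source of the $o_R(1)$ error.
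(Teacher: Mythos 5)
Your proposal is correct and follows essentially the same route as the paper: the bound $|z'(t)|\lesssim R$ giving the $R/T$ term, the interior of the Morawetz identity \eqref{eq:morawetz_id} converted (via the cutoff $\chi_R$, at the cost of $O(R^{-2})$ errors) into the coercive quantity of Proposition \ref{prop:com_chi} to produce $\delta' P(\chi_R u)$, the exterior Hartree/nonlocal terms estimated by Hardy--Littlewood--Sobolev (Lemma \ref{lemma:Hardy}) together with the radial Sobolev inequality (Lemma \ref{radial-sobolev-Lp}) to give $R^{-\frac{(N-1)B}{N}}$, and the $\Delta^2 a$ term giving $R^{-2}$, exactly as in the paper's proof. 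The only (minor) deviation is the potential term: you discard $-2\int\nabla V\cdot\nabla a\,|u|^2\,dx$ by the pointwise sign $\nabla V\cdot\nabla a=\partial_r a\,\partial_r V\le 0$ (valid since $\partial_r a\ge 0$ everywhere and $x\cdot\nabla V\le 0$; neither $V\ge 0$ nor radiality is really needed for this sign), which is a legitimate and in fact slightly simpler treatment than the paper's region-splitting argument that uses $x\cdot\nabla V\in L^q$ to bound the annulus contribution by $o_R(1)$, and it still yields \eqref{eq:morawetz_estimate}.
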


\begin{proof} Consider $\mathcal{M}(t)=z'(t)$.
 By Cauchy-Schwarz,  
 \begin{equation}\label{eq:sup_M}
 \sup_{t \in \R} |\mathcal{M}(t)| \leq 2 \int |u| |\nabla u| |\nabla a| dx\lesssim \|\nabla a\|_{L^\infty}\|u\|_2\|\nabla u\|_2\lesssim R.
 \end{equation}
 
From Lemma \ref{lema:morawetz_identity},
\begin{equation}
\begin{split}
\mathcal{M}'(t) =& \int |u|^2 (-\Delta^2 a) + 4 Re (a_{jk} \bar u_j u_k) dx \\
&- 4 \int \Delta a \left( \frac{1}{2}-\frac{1}{p}\right) \left( I_\gamma \ast|u|^p \right) |u|^p \\
&- \frac{4(N-\gamma)}{p} \int \int \nabla a \cdot \frac{x-y}{|x-y|^{N-\gamma+2}} |v(y)|^p dy |v(x)|^p dx \\
& - 2 \int \nabla V \cdot \nabla a |u|^2 dx.
\end{split}
\end{equation}
Except for the last term, the estimates are the same as in \cite{ARORA-GH-19}. One gets
\begin{equation}
\label{eq:Mor_est_1}
\begin{split}
\mathcal{M}'(t) \geq & 8 \int_{|x| \leq \frac{R}{2}} |\chi_R \nabla u|^2 - \frac{4B}{p} \int_{|x|\leq \frac{R}{2}} \left( I_\gamma \ast |\chi_R u|^p \right) |\chi_R u|^p \\
& + C_1 \iint_\Omega \left[\left(1-\frac{1}{2}\frac{R}{|x|} \right)x - \left(1-\frac{1}{2}\frac{R}{|y|} \right)y \right] \frac{x-y}{|x-y|^{N-\gamma+2}} |u(x)|^p dy |v(y)|^p dxdy\\
&- C_2 \int_{|x|>\frac{R}{2}} \left( I_\gamma \ast |u|^p \right) |u|^p dx - \frac{1}{R^2} M[u] - 2 \int \nabla V \cdot \nabla a |u|^2 dx,
\end{split}
\end{equation}
where $C_1, C_2 >0$ are constants and $\Omega$ is the region
$$
\Omega = \left\{ (x,y) \in \R^N \times \R^N : |x| > \frac{R}{2} \} \cup \{ (x,y) \in \R^N \times \R^N : |y| > \frac{R}{2} \right\}.
$$

Following \cite{ARORA-GH-19} (see also \cite{Saanouni-23-hartree}), we estimate the integral on $\Omega$ subdividing it into three regions. We denote $A:= \left[\left(1-\frac{1}{2}\frac{R}{|x|} \right)x - \left(1-\frac{1}{2}\frac{R}{|y|} \right)y \right]$.
\begin{itemize}
    \item Region I: we consider $x>\frac{R}{2}$ and $y>\frac{R}{2}$. We observe that
     $A \lesssim |x-y|$, then
     $$
     \iint_{\begin{array}{c}
          |x|>R/2\\
          |y|>R/2 
     \end{array}} |A|\frac{|x-y|}{|x-y|^{N-\gamma+2}}|u(y)|^p |u(x)|^p dxdy \lesssim
     \int_{|x|>R/2} \left( |x|^{-(N-\gamma)} * |u|^p\right) |u|^p dx.
     $$
     By Hölder inequality, Hardy inequality, and Radial Sobolev inequality (Proposition \ref{radial-sobolev-Lp}), we get
     \begin{align*}
     \int_{|x|>R/2} \left( |x|^{-(N-\gamma)} * |u|^p\right) |u|^p dx &\lesssim \||x|^{N-\gamma}*|u|^p\|_{L^{\frac{2N}{N-\gamma}}} \|u\|_{L^{\frac{2Np}{N+\gamma}}}^p \\
     &\lesssim \|u\|_{L^{\frac{2Np}{N+\gamma}}}^{2p} \\
     &\lesssim \frac{1}{R^{\frac{(N-1)B}{N}}}\|u\|_{L^2}^{\frac{B+2(N+\gamma)}{N}} \|\nabla u\|_{L^2}^{\frac{B}{N}} \\
     &\lesssim \frac{1}{R^{\frac{(N-1)B}{N}}}.
     \end{align*}
    \item Region II: we consider $x<\frac{R}{2}$ and $y>\frac{R}{2}$. Following \cite{ARORA-GH-19} we also have $A \lesssim |y| \sim |x-y|$, then
    $$
     \iint_{\begin{array}{c}
          |x|<R/2\\
          |y|>R/2 
     \end{array}} |A|\frac{|x-y|}{|x-y|^{N-\gamma+2}}|u(y)|^p |u(x)|^p dxdy \lesssim
     \int_{|y|>R/2} \int \frac{1}{|y-x|^{N-\gamma}} |u(x)|^p dx |u(y)|^p dy.
     $$
     Proceeding as in Region I, 
     \begin{align*}
     \int_{|y|>R/2} \left( |y|^{-(N-\gamma)} * |u|^p\right) |u|^p dy \lesssim \frac{1}{R^{\frac{(N-1)B}{N}}}.
     \end{align*}
     \item Region III: we consider symmetric region $x>\frac{R}{2}$ and $y<\frac{R}{2}$. The estimation is analogous:
     \begin{align*}
     \iint_{\begin{array}{c}
          |x|>R/2\\
          |y|<R/2 
     \end{array}} |A|\frac{|x-y|}{|x-y|^{N-\gamma+2}}|u(y)|^p |u(x)|^p dxdy \lesssim
     \int_{|x|>R/2} \left( |x|^{-(N-\gamma)} * |u|^p\right) |u|^p dy \lesssim \frac{1}{R^{\frac{(N-1)B}{N}}}.
     \end{align*}
     
\end{itemize}

Continuing the computation in \eqref{eq:Mor_est_1}, using Proposition \ref{prop:com_chi} we have
$$
\mathcal{M}'(t) \geq 8 \delta P(\chi_R u) + C_1 \frac{1}{R^{\frac{(N-1)B}{N}}}
- C_2 \int_{|x|>\frac{R}{2}} (I_\gamma * |u|^p) |u|^p dx - \frac{1}{R^2} M(u) - 2 \int \nabla V \cdot \nabla a |u|^2 dx.
$$
Proceeding as the integrals on $\Omega$ we also have
$$
\int_{|x|>R/2} \left( |x|^{-(N-\gamma)} * |u|^p\right) |u|^p dy \lesssim \frac{1}{R^{\frac{(N-1)B}{N}}},
$$
thus
\begin{equation}
\label{eq:mora_P_chi_u}
P(\chi_R u) \lesssim \mathcal{M}'(t) - C_1 \frac{1}{R^{\frac{(N-1)B}{N}}} + C_2 \frac{1}{R^{\frac{(N-1)B}{N}}} + \frac{1}{R^2} M(u) + 2 \int \nabla V \cdot \nabla a |u|^2 dx.    
\end{equation}

We estimate the term with the potential $V$ and show that it can be made arbitrarily small for $R$ big enough. By the definition of the function $a$ we have

$$ \int \nabla V \cdot \nabla a |u|^2 dx  = \int_{|x| < \frac{R}{2}} 2 x \cdot \nabla V |u|^2 dx + \int_{\frac{R}{2} \leq |x| \leq R} \nabla a \cdot \nabla V  |u|^2 dx + \int_{|x| > R} R \frac{x}{|x|} \cdot \nabla V  |u|^2 dx. $$

By the hypothesis $x \cdot \nabla V \leq 0$, we get 
$$ \int \nabla V \cdot \nabla a |u|^2 dx \leq  \int_{\frac{R}{2} \leq |x| \leq R} \nabla a \cdot \nabla V   |u|^2 dx.$$

Since $a$ and $V$ are radial, $\nabla a$ and $\nabla V$ are colinear and since $\partial_r a > 0$ and $\frac{x}{|x|}\cdot \nabla V = \partial_r V < 0$, we have $\nabla a \cdot \nabla V = -|\nabla a| |\nabla V| = -|\partial_r a| |\partial_r V|$. Using \eqref{eq:props_a_annulus} on $\frac{R}{2} \leq |x| \leq R$,  
$$ -|\partial_r a| |\partial_r V| \lesssim - R | \frac{x}{|x|} \cdot \nabla V| \lesssim - R \frac{1}{R} |x \cdot \nabla V|.$$
Thus
$$ \int \nabla V \cdot \nabla a |u|^2 dx \lesssim - \int_{\frac{R}{2} \leq |x| \leq R} |x \cdot \nabla V| |u|^2 dx.$$

On the other hand, the condition $x \cdot \nabla V  \in L^q$ for $\frac{3}{2} \leq q \leq \infty$, implies
$$ \int x \cdot \nabla V|u|^2 dx \leq \|x \cdot \nabla V\|_{L^q}\|u(t)\|_{L^{\frac{2q}{q-1}}}^2 \lesssim \|u\|_{H^1}^2 \lesssim 1,$$
where we used the Sobolev embedding $H^1(\R^3) \subset L^{\frac{2q}{q-1}}(\R^3)$ for any $\frac{3}{2} \leq q \leq \infty$. Thus,
$$ \int \nabla V \cdot \nabla a |u|^2 dx \leq o_R(1).
$$

Integrating \eqref{eq:mora_P_chi_u} on $[0,T]$ we obtain
$$
\int_0^T P(\chi_R u) dt \lesssim \sup_{t \in [0,T]} |\mathcal{M}(t)| + C_3 \frac{T}{R^{\frac{(N-1)B}{N}}} + \frac{T}{R^2} M(u) + T \ o_R(1). 
$$

Therefore, using \eqref{eq:sup_M},
$$
\frac{1}{T} \int_0^T P(\chi_R u(t)) dt \lesssim \frac{R}{T} + \frac{1}{R^{\frac{(N-1)B}{N}}} + \frac{1}{R^2} + o_R(1).
$$
If $(N-1)B < 2N$ then $\frac{1}{T} \int_0^T P(\chi_R u(t)) dt \lesssim \frac{R}{T} + \frac{1}{R^{\frac{(N-1)B}{N}}} + o_R(1)$, otherwise $\frac{1}{T} \int_0^T P(\chi_R u(t)) dt \lesssim \frac{R}{T} + \frac{1}{R^2} + o_R(1)$.
\end{proof}

We now show an energy evacuation result.
\begin{proposition}
\label{energy_evacuation}
There exist a sequence of times $t_n \to \infty$ and a sequence of radii $R_n \to \infty$ such that
$$
\lim_{n \to \infty} P(\chi_{R_n} u(x,t_n)) = 0.
$$
\end{proposition}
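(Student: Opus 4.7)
The strategy is to derive the pointwise-in-time conclusion from the time-averaged Morawetz estimate in Proposition \ref{prop:morawez_estimate} via a pigeonhole argument, selecting a sliding time window so that the extracted times diverge.

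First, I observe that the proof of Proposition \ref{prop:morawez_estimate} extends verbatim to any bounded interval $[S, S']$ in place of $[0, T]$: the virial quantity $\mathcal{M}(t)$ satisfies $|\mathcal{M}(t)| \lesssim R$ uniformly in $t$ by \eqref{eq:sup_M}, and the remaining steps depend only on the length $S' - S$. Consequently
\[
\frac{1}{S'-S}\int_S^{S'} P(\chi_R u(t))\,dt \;\lesssim_{\delta}\; \frac{R}{S'-S} + \frac{1}{R^{\alpha}} + o_R(1),
\]
where $\alpha := \min\{(N-1)B/N,\, 2\}$ and the $o_R(1)$ term (coming from the potential contribution controlled by $x \cdot \nabla V$) vanishes as $R \to \infty$.

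Next, I select the parameters. Choose $R_n \to \infty$ so that the tail $\tau_n := \frac{1}{R_n^{\alpha}} + o_{R_n}(1)$ tends to $0$, and then pick $T_n \to \infty$ with $T_n / R_n \to \infty$ (for instance $T_n = R_n \tau_n^{-1}$). Applying the Morawetz estimate on the sliding window $[T_n, 2T_n]$ yields
\[
\frac{1}{T_n}\int_{T_n}^{2T_n} P(\chi_{R_n} u(t))\,dt \;\leq\; C\varepsilon_n, \qquad \varepsilon_n := \frac{R_n}{T_n} + \tau_n \to 0.
\]

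Finally, a standard pigeonhole step concludes the argument: if $P(\chi_{R_n} u(t)) > C\varepsilon_n$ held for every $t \in [T_n, 2T_n]$, then the integral on the left would exceed $T_n\, C\varepsilon_n$, a contradiction. Hence there exists $t_n \in [T_n, 2T_n]$ with $P(\chi_{R_n} u(t_n)) \leq C\varepsilon_n \to 0$, and since $t_n \geq T_n \to \infty$, both sequences diverge while the potential energy on the ball evaporates. I expect no substantive obstacle: all the delicate analytic work already resides in Proposition \ref{prop:morawez_estimate}, and what remains here is essentially a bookkeeping choice of the parameters $R$ and $T$ along a diagonal subsequence.
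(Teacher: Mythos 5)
Your argument is correct and is essentially the paper's proof: the paper applies the Morawetz estimate of Proposition \ref{prop:morawez_estimate} with $T=R^{3}$ on $[0,R^{3}]$ and extracts the times by the mean value theorem, which is the same time-averaging-plus-pigeonhole idea you use. Your sliding-window variant on $[T_n,2T_n]$ (legitimate, since $\sup_{t}|\mathcal{M}(t)|\lesssim R$ by \eqref{eq:sup_M} makes the estimate valid on any interval) differs only in making $t_n\to\infty$ immediate, whereas the paper's choice requires the routine Chebyshev-type remark that the good times may be taken in, say, the upper half of $[0,R_n^{3}]$.
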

\begin{proof}
Taking $T=R^3$ in Proposition \ref{prop:morawez_estimate}, in any of the two cases in \eqref{eq:morawetz_estimate} we have for $R$ sufficiently large that
$$
\frac{1}{R^3} \int_0^{R^3} P(\chi_{R} u(t)) dt \lesssim \frac{1}{R^2} + \frac{1}{R^{\frac{(N-1)B}{N}}} + o_R(1).
$$
Since $u$ is global and $P(u(t)) \leq E(u(t)) = E(u_0)$ is bounded, by the mean value theorem there are sequences $t_n \to \infty$ and $R_n \to \infty$ such that
$$
\lim_{n \to \infty} P(\chi_{R_n} u(t_n))= 0.
$$
\end{proof}


\ We finally prove the main theorem.

{\bf Proof of Theorem \ref{theo:scattering}} The hypothesis \eqref{eq:only_one_hypo} implies that $P(u)$ is uniformly bounded in time. Since the energy $E(u)$ is preserved we have that $\|\nabla u\|^2 + \int V(x) |u|^2 dx = \|\Lambda u\|^2$ is bounded and then since $V>0$, the solution $u$ is global and uniformly bounded in $H^1$. 

Choose $\epsilon$ and $R$ as in the scattering criterion (Proposition \ref{scattering_criterion}) with $t_n \to\infty$ and $R_n \to \infty$ as in Proposition \ref{energy_evacuation}. Taking $n$ large so that $R_n \geq R$, using Hölder we have 
$$
\int_{|x|\leq R} |u(x,t_n)|^2 dx \lesssim R^{\frac{N(p-2)}{p}} \left(\int_{|x|<R_n} |u(x,t_n)|^{p}\right)^{2/p}.
$$
On the other hand, 
\begin{align*}
\left( \int_{|x|<R} |u(x,t_n)|^{p} \right)^2 &= \int_{|x|<R} \int_{|y|<R} \frac{|u(x,t_n)|^p |u(y,t_n)|^p}{|x-y|^{N-\gamma}} |x-y|^{N-\gamma} dx dy \\
& \leq (2R)^{N-\gamma} \int_{|x|<R_n} \int_{|y|<R_n} \frac{|u(x,t_n)|^p |u(y,t_n)|^p}{|x-y|^{N-\gamma}} dx dy \\
& \leq (2R)^{N-\gamma} \int_{|x|<R_n} \int_{|y|<R_n} \frac{1}{|x-y|^{N-\gamma}} |u(y,t_n)|^p dy |u(x,t_n)|^p dx \\
& = (2R)^{N-\gamma} P(\chi_{R_n} u(t_n)).    
\end{align*}
Proposition \ref{energy_evacuation} implies that
$$
\int_{|x|\leq R} |u(x,t_n)|^2 dx \to 0 \quad \text{as} \quad n \to \infty,
$$
and thus Proposition \ref{scattering_criterion} implies that $u$ scatters in $H^1$ forward in time.

\bibliographystyle{abbrv}
\bibliography{bib}	

\end{document}